\newtheorem{theorem}{Theorem}[section]
\newtheorem{lemma}[theorem]{Lemma}
\newtheorem{corollary}[theorem]{Corollary}
\newtheorem{proposition}[theorem]{Proposition}
\theoremstyle{definition}
\newtheorem{definition}[theorem]{Definition}
\theoremstyle{remark}
\newtheorem{remark}[theorem]{Remark}
\numberwithin{equation}{section}
\newcounter{eqncounter}
\numberwithin{equation}{eqncounter}
\def\pw{\mathfrak{p}}
\def\H2{H_2}
\def\ALS{ALS}
\def\m{m}
\def\wK{w_K}
\def\M{M}
\def\ggg{g_{\max}}
\def\IR{\mathbb R}
\def\IC{\mathbb C}
\def\IZ{\mathbb Z}
\def\IP{\mathbb P}
\def\IQ{\mathbb Q}
\def\IU{\mathbb U}
\def\P{\mathcal{P}}
\def\IL{\mathcal{L}}
\def\OD{\Omega}
\def\abbmin{\nu}
\def\en{\mathcal{N}}
\def\hen{H_\mathcal{N}}
\def\balf{\vec{\alpha}}
\def\bom{{\bf \omega}}
\def\bz{{\bf z}}
\def\bz0{{\bf z^{(0)}}}
\def\bz1{{\bf z^{(1)}}}
\def\grmnm1{\mathcal{M}^*_K(n-1,X)}
\def\grmnmnew1{\mathcal{M}^*_K(n-1,X^n)}
\renewcommand{\vec}[1]{\mbox{\boldmath$#1$}}
\def\ord{\mathop{\rm ord}\nolimits}
\newcommand{\tr}{\operatorname{tr}}                  % trace operator
\def\Oseen{{\mathcal{O}}}
\def\A{{\mathfrak{A}}}
\def\C{{\mathfrak{C}}}
\def\D{{\mathfrak{D}}}
\def\B{{\mathfrak{B}}}
\def\L{{\mathfrak{L}}}
\def\T{{\mathfrak{T}}}
\let\rho\varrho
\def\ta_0{\tau}
\def\ti{{\tau_{{\bf i}}}}
\def\i{{{\bf i}}}
\def\g0{\ta_0\sigma(\C_0^{-1}\D)}
\def\G0{\tau\Lambda(\D)}
\def\Lam{\Lambda}
\def\Lamen{\Lambda_{\en}}
\def\Lamst{\Lambda^*}
\def\Qbar{\overline{\IQ}}
\def\kbar{\overline{k}}
\def\tr{tr_{{\bf i}}}
\def\etr{etr_{{\bf i}}}
\def\vx{{\bf x}}
\def\vy{{\bf y}}
\def\vz{{\bf z}}
\def\vdelta{\vec{\delta}}
\def\v0{\vec{0}}
\def\d_1{\kappa}
\def\Vol{\textup{Vol }}
\def\Cl{Cl}
\def\Da{D}
\def\ka{c}
\begin{document}

% \title[short text for running head]{full title}
\title{Counting primitive points of bounded height}

%    Only \author and \address are required; other information is
%    optional.  Remove any unused author tags.

%    author one information
% \author[short version for running head]{name for top of paper}
\author{Martin Widmer}
\address{Mathematisches Institut \\
  Universit\"at Basel \\
  Rheinsprung 21 \\
  4051 Basel \\
  Switzerland}
\curraddr{Department of Mathematics \\
  University of Texas at Austin \\
  1 University Station C1200 \\
  Austin, Texas 78712 \\
  U.S.A}
\email{widmer@math.utexas.edu}
\thanks{The author was supported by NSF Grant \#118647}

%    \subjclass is required.
\subjclass[2000]{Primary 11G35; Secondary 11D75, 11G50, 14G25}

\date{November 25, 2008}

\dedicatory{}

%    Abstract is required.
\begin{abstract}
Let $k$ be a number field and $K$ a finite extension
of $k$.
We count points of bounded height
in projective space over the field $K$ generating the extension 
$K/k$. As the height gets large we derive asymptotic estimates
with a particularly good error term respecting the extension $K/k$.
In a future paper we will use these results to get asymptotic estimates for the 
number of points of fixed degree over $k$. 
We also introduce the notion
of an adelic Lipschitz height generalizing that of Masser and Vaaler. This will lead to further applications involving
points of fixed degree on linear varieties and algebraic numbers of fixed degree satisfying certain
subfield conditions.
\end{abstract}

\maketitle

\tableofcontents

\section{Introduction}
Let $K$ be a number field of degree $d$ and write $\IP^n(K)$ for the projective space of dimension $n$
over $K$. Denote by $H$ the non-logarithmic absolute
Weil height on $\IP^n(K)$; the definition is given in Section \ref{secALH}. 
A well-known result
due to Northcott (\cite{26} Theorem) implies that
$Z_H(\IP^n(K),X)$,
the number of points in $\IP^n(K)$ with height not larger than $X$,
is finite for each positive real number $X$.
Schanuel \cite{25} had proved the following asymptotic estimate. As $X$ tends to infinity one has
\begin{alignat}3
\label{Scha1}
Z_H(\IP^n(K),X)=S_K(n)X^{d(n+1)}+O(X^{d(n+1)-1}\log X).
\end{alignat}
The logarithm can be omitted in all cases except for
$n=d=1$ and the constant implicit in $O$ depends on $K$ and $n$ only.
The constant $S_K(n)$ in the main term depends on the detailed field structure and involves all classical field 
invariants.\\

More recently Masser and Vaaler \cite{1} introduced heights where the maximum norms
at the infinite places are replaced by more general so called Lipschitz distance functions,
let us call them Lipschitz heights.
Masser and Vaaler generalized Schanuel's result to Lipschitz heights and simplified the original proof
considerably. Their main application of this generalization is an asymptotic
counting result on algebraic numbers of bounded height and fixed degree. But they also deduce
other counting results e.g. on algebraic subgroups of the multiplicative group $\mathbb{G}_m^{n+1}$ 
with bounded degree.\\

In the present paper we generalize these results in several respects. First we allow also arbitrary norms at a finite number of finite places in the spirit of an adelic viewpoint. Secondly we make the constant in the error term more explicit in the sense of Schmidt \cite{14} and Gao \cite{7}. Thirdly, also in this sense, we show that this constant goes rapidly to zero as the field $K$ becomes more complicated, under the necessary condition that the  counting is restricted to primitive points. Fourthly we generalize the primitivity condition to involve an arbitrary subfield $k$ of $K$. Fifthly we express the constant in terms of some new invariant $\delta(K/k)$ which itself generalizes a quantity $\delta(K/Q)$ introduced by Roy and Thunder \cite{8}. 
Sixthly we present an improvement in terms of certain refined
quantities $\delta_g(K/k)$. And finally, more on the technical level, we calculate the dependence
on the Lipschitz functions themselves.\\

We carry out these various generalizations not only for their own sake, but also with definite applications in mind, which we intend to publish in future papers.
Here is a more detailed discussion.
First of all, the adelic generalization is natural in view of the 
equal status of all places on a number field.
But it is also essential so that we can deduce some new results about counting points on subspaces. 
Let us illustrate this with a simple example.
The height of a point on the plane defined by the equation $2x+3y-z=0$ involves expressions
\begin{alignat}3
\label{exintro}
\max\{|x|_v,|y|_v,|z|_v\}=\max\{|x|_v,|y|_v,|2x+3y|_v\} 
\end{alignat}
with valuations $v$ corresponding to various places. If the place is infinite, then the right-hand side of (\ref{exintro}) is a function of $x,y$ as allowed in \cite{1}; and if the place is finite, then it is simply $\max\{|x|_v,|y|_v\}$ as required in \cite{1}. But if we change the equation to $2x+3y-5z=0$ then the left hand-side of (\ref{exintro}) is $\max\{|x|_v,|y|_v,|(2x+3y)/5|_v\}$ which is not $\max\{|x|_v,|y|_v\}$ at places over the prime 5. Hence we must be prepared
to allow modifications on the max-norm not only at the
infinite places but also at a finite number of finite
places.\\

In \cite{art3} 
we will prove a counting result for points of fixed degree on a linear 
projective variety. This generalizes a result of Thunder (Theorem 1 in \cite{34}).
Thunder \cite{78} introduced twisted heights
where all places are considered in a perfectly equal manner.
But twisted heights are more restrictive at the infinite
places and are therefore not applicable to deduce the results
in \cite{art4}, mentioned in the last paragraph of this section.\\

Regarding the second and third generalizations mentioned above, Schmidt \cite{14} in 1995 considered for quadratic $K$ the set $\IP^n(K/\IQ)$ of primitive points of $\IP^n(K)$ whose affine coordinates generate (over $\IQ$) the whole field $K$. The main term in (\ref{Scha1}) is not changed, but he could replace the error term (for $d=2$) by
\begin{alignat}3
\label{Schmerror1}
O\left(\frac{\sqrt{h_KR_K\log(3+h_KR_K)}}{|\Delta_K|^{n/2}}X^{2n+1}\right)
\end{alignat}
where $h_K$ is the class number, $R_K$ denotes the regulator, $\Delta_K$
is the discriminant and the constant in $O$ depends only on $n$ but is independent
of the field $K$.
It is not difficult to see that such a good estimate cannot hold without the primitivity condition. Schmidt's purpose was to deduce asymptotic results for counting points of $\IP^n$ quadratic over $\IQ$. This he did by the simple but bold idea of summing over all quadratic fields $K$, when the large power of the discriminant in (\ref{Schmerror1}) is necessary for convergence. Everything was generalized to arbitrary $K$ by Gao \cite{7}, also in 1995. He extended (\ref{Schmerror1}) and also obtained a more complicated version with better summatory properties. This enabled him to deduce asymptotic results for counting points of $\IP^n$ of fixed degree $e$ over $\IQ$ provided $n>e$. However, Gao's work remains unpublished.\\

Regarding the fourth and fifth generalizations, our motivation is to extend Gao's results to count points of $\IP^n$ of fixed degree $e$ over a fixed number field $k$. This problem was already considered by Schmidt in \cite{22}. In the present paper we express our error terms like (\ref{Schmerror1}) using the quantities $\delta(K/k)$, which also have better summatory properties than the discriminant. Those for the discriminant are still governed by difficult conjectures such as Linnik's Conjecture (see \cite{56}). The latter is proved only for very special cases although great progress was achieved by the recent work of Ellenberg and Venkatesh \cite{56}. Anyway, by using $\delta$ we are able to deduce asymptotic results for counting points of $\IP^n$ of fixed degree $e$ over $k$ provided $n>4e$. 
And it is the refined quantities $\delta_g(K/k)$ that enable us to improve this to $n$ about $5e/2$.\\

Finally the Lipschitz functions in the heights are characterised by certain
parametrizations involving Lipschitz constants, and we develop a formalism for
calculating with these.\\

Let us informally present a special case of our main result Theorem \ref{prop3}. We are now counting the set $\IP^n(K/k)$ of primitive points of $\IP^n(K)$ whose affine coordinates generate over $k$ the whole field $K$; but this time with respect to an adelic Lipschitz height $\en$. We then generalize and improve (\ref{Scha1}) in the style of (\ref{Schmerror1}) to
\begin{alignat}1 
\label{introresultat}
Z_{\en}(\IP^n(K/k),X)=S_{\en}(n)X^{d(n+1)}+O\left(A_{\en}\frac{h_KR_K}{\delta(K/k)^{d(n+1)/2-1}}X^{d(n+1)-1}{\IL}_{\en}\right),
\end{alignat}
now with the constant implied in the $O$ depending only on $d$ and $n$. Here $S_{\en}(n)$ is related to certain volumes of unit balls and lattice determinants, and $A_{\en}$ is related to the Lipschitz constants for unit spheres and the norms; while ${\IL}_{\en}$ is logarithmic in $X$.\\

Our Theorem \ref{prop3} sharpens (\ref{introresultat}) yet further in terms of the $\delta_g(K/k)$. It
has various applications such as counting points of fixed degree in $\IP^n(\kbar)$ 
($\kbar$ denotes an algebraic closure of $k$)
and on linear subvarieties of $\IP^n(\kbar)$ defined over $k$
(see \cite{art3}). Due to the $n>5e/2$ condition we need the dimension of the underlying variety to be sufficiently large when compared with the degree. In particular we are unable to count quadratic points on a line.
But Theorem \ref{prop3} leads also to a generalized version of Proposition in \cite{1}
(in fact with a particularly good error term)
and it is most likely that using this generalized
proposition and following
the ideas of Masser and Vaaler in \cite{1}
one can in fact deduce the
asymptotics for points of fixed degree on an arbitrary line, despite the dimension being so small.\\

Let us mention briefly some other applications of Theorem \ref{prop3}.
Thanks to \cite{art3} we can sometimes sum over linear subvarieties rather than number fields. In this way we can obtain the asymptotics for points over a fixed number field on a non-linear hypersurface like that defined by $x-yz^r=0$. Here the main term involves the so-called height zeta function. 
Or more ambitiously we can occasionally sum over both linear subvarieties and number fields to get the asymptotics for points of fixed degree on more elaborate non-linear varieties like that defined by
\begin{alignat*}3
x_{1}-y_{1}z^r=\dots=x_{n}-y_{n}z^r=0.
\end{alignat*}
Finally let us mention that Theorem \ref{prop3} can be used to derive
a refinement of Masser and Vaaler's result (Theorem in \cite{37})
on counting algebraic numbers. Let $m$ and
$n$ be natural numbers. Instead of counting all algebraic numbers $\alpha$
of degree $mn$ as in \cite{37} we consider only those numbers $\alpha$ such that $\IQ(\alpha)$
contains a subfield of degree $m$.
If $n$ is much larger than $m$ Theorem \ref{prop3} can be applied to 
get the correct asymptotics. For instance the asymptotics for points of degree
$32$ involve $X^{1056}$ while the number of points of degree $32$ generating a field with a 
quadratic subfield has only order of magnitude $X^{544}$.
This leads
also to information on the distribution of number fields of 
degree $d$ containing a proper intermediate field
if ordered via the function $\delta$; for more details we refer to
\cite{art4}.\\

We close the introduction with a few remarks about the structure of our paper.\\

In Section \ref{secALH} we introduce the notion of an adelic Lipschitz system
leading to an adelic Lipschitz height on $\IP^n(K)$.
The main result Theorem \ref{prop3} is stated in Section \ref{chap3sec2}.
Furthermore we show that it implies (\ref{introresultat}) as our Corollary \ref{prop2}.
The problem of estimating $Z_{\en}(\IP^n(K/k),X)$ is reduced to counting lattice
points in a certain bounded region $S$ of $\IR^{D}$. In Section \ref{1subsec1} we recall some basic
facts about lattices in general. In Section \ref{1subsec3} we develop the basic counting technique for lattice
points which relies on parameterization maps of the boundary
$\partial S$ satisfying a Lipschitz condition. 
In Section \ref{thebasicset} we introduce the set $S=S_F(T)$ where the counting will be carried out.
Then in Section \ref{lpb} we show that this set satisfies the necessary Lipschitz conditions; but in order not to
distract the reader too much from the basic line of the proof we postpone the somewhat tedious and lengthy proof to the appendix.
However, it turns
out that we are faced with a serious problem when applying the counting method since the Lipschitz constants for our boundary
$\partial S$ are far too large, resulting in a very bad error term.
In \cite{14} (which deals with $d=2$) Schmidt shows a way out of this misery by splitting up the set $S$ in several subsets and applying a suitable
linear transformation on each of them. Section \ref{schmidtstrick} is dedicated to the extension of Schmidt's approach from $d=2$ to arbitrary $d$.  As in Gao's work \cite{7} this extension is relatively straightforward.
The primitivity condition of $\IP^n(K/k)$ translates directly into an arithmetic property for the lattice points. 
In Section \ref{Estminima} we translate this into a geometric property saying that the
length of each lattice point which gives a contribution to $\IP^n(K/k)$ is bounded below nicely in terms of $\delta_g(K/k)$.
In Section \ref{subseccounting} we apply the counting techniques of Section \ref{1subsec3} to obtain estimates for the number of lattice points in $S_F(T)$ using the geometric property established in Section \ref{Estminima}. In this way $\delta_g(K/k)$ enters the error estimates. Finally in Section \ref{2endofproof} we are in position to prove Theorem \ref{prop3}.

\section*{Acknowledgements}
I am indebted to my Ph.D. adviser David Masser for substantial improvements on an earlier version
of this article, plenty of delightful discussions and finally for motivating me to 
work on the problems considered here.
I also would like to thank Jeffrey Thunder and Jeffrey Vaaler for fruitful conversations and comments.
Finally, I thank the referee for carefully reading this manuscript and for valuable suggestions.
This work was financially supported by the Swiss National Science Foundation.

\section{Definitions}\label{secALH}
In 1967 Schmidt \cite{86} introduced heights where the max-norm at the infinite places (see (\ref{height}) below)
is replaced by a fixed but arbitrary distance function.
Masser and Vaaler's Lipschitz heights
in \cite{1} are more flexible since they allow different Lipschitz distance functions at the infinite places.
Adelic Lipschitz heights are a natural generalization of  Masser and Vaaler's Lipschitz heights.
Before we can define adelic Lipschitz heights we have to fix some basic notation.
For a detailed account on heights we refer the reader to \cite{BG} and \cite{3}.\\

Let $K$ be a finite extension
of $\IQ$ of degree $[K:\IQ]=d$. By a place $v$ of $K$ we mean an equivalence class of non-trivial absolute values on $K$.
The set of all places of $K$ will be denoted by $M_K$.
For each $v$ in $M_K$ we write $K_v$ for the completion
of $K$ with respect to the place $v$ and $d_v$
for the local degree defined by
$d_v=[K_v:\IQ_v]$ 
where $\IQ_v$ is a completion with respect to the place which extends to $v$. A place $v$ in $M_K$ corresponds either to a non-zero prime ideal $\pw_v$
in the ring of integers $\Oseen_K$ or to a complex embedding
$\sigma$ of $K$ into $\IC$.
If $v$ comes from a prime ideal we call $v$
a finite or non-archimedean place indicated by $v\nmid \infty$ and if $v$ corresponds to
an embedding we say $v$ is an infinite or archimedean
place abbreviated to $v\mid \infty$.
For each place in $M_K$ we choose a representative
$|\cdot|_v$,
normalized in the following way:
if $v$ is finite and $\alpha\neq 0$ we set by convention
\begin{alignat*}3
|\alpha|_{v}=N\pw_v^{-\frac{\ord_{\pw_v}(\alpha\Oseen_K)}{d_v}}
\end{alignat*}
where $N\pw_v$ denotes the norm of $\pw_v$
from $K$ to $\IQ$ and $\ord_{\pw_v}(\alpha\Oseen_K)$
is the power of $\pw_v$ in the prime ideal decomposition
of the fractional ideal $\alpha\Oseen_K$.
Moreover we set
\begin{alignat*}3
|0|_{v}=0.
\end{alignat*}
For $v$ infinite we define
\begin{alignat*}3
|\alpha|_{v}=|\sigma(\alpha)|
\end{alignat*}
where $|\cdot|$ is the usual complex modulus.
Suppose $\alpha$ is in $K^*=K\backslash\{0\}$ then 
$|\alpha|_v\neq 1$
holds only for a finite number of places $v$.\\

Throughout this article $n$ will denote a natural number, which 
means a positive rational integer. The height
on $K^{n+1}$ is defined by
\begin{alignat}3
\label{height}
H(\alpha_0,...,\alpha_n)=\prod_{M_K}\max\{|\alpha_0|_v,...,|\alpha_n|_v\}^{\frac{d_v}{d}}.
\end{alignat}
Due to the remark above this is in fact a finite product. 
Furthermore this definition is independent of the field $K$ containing
the coordinates (see \cite{BG} Lemma 1.5.2 or \cite{3} pp.51-52) and therefore
defines a height on $\Qbar^{n+1}$ for an algebraic closure
$\Qbar$ of $\IQ$.
The well-known \it product formula \em (see \cite{BG} Proposition 1.4.4) asserts that
\begin{alignat*}3
\prod_{M_K}|\alpha|_v^{d_v}=1 \text{ for each $\alpha$ in $K^*$}.
\end{alignat*}
This implies in particular that the value of the height in (\ref{height}) does not change if we multiply each coordinate with a fixed element of $K^*$. 
Therefore one can define a height on points
$P=(\alpha_0:...:\alpha_n)$ in $\IP^n(\Qbar)$ by
\begin{alignat}3
\label{heightproj}
H(P)=H(\alpha_0,...,\alpha_n)
\end{alignat}
and moreover $H(\balf)\geq 1$ for $\balf\in \Qbar^{n+1}\backslash\{\v0\}$.
The equations (\ref{height}) and (\ref{heightproj}) define the absolute non-logarithmic
projective Weil height or simpler Weil height.\\

Let $r$ be the number of  
real embeddings and $s$ the number of pairs of complex conjugate embeddings
of $K$ so that $d=r+2s$.
For every place $v$ we fix a
completion $K_v$ of $K$ at $v$. There is a 
value set 
\begin{alignat*}3
\Gamma_v=\{|\alpha|_v;\alpha \in K_v\}.
\end{alignat*}
It is $[0,\infty)$ for $v$ archimedean
and 
\begin{alignat*}3
\{0,(N\pw_v)^{0},(N\pw_v)^{\pm 1/d_v},(N\pw_v)^{\pm 2/d_v},...\}
\end{alignat*}
otherwise.
For $v \mid \infty$ we identify $K_v$ with $\IR$ or
$\IC$ respectively and we identify $\IC$ with
$\IR^2$ via $\xi\longrightarrow (\Re(\xi),\Im(\xi))$
where we used $\Re$ for the real and $\Im$ for the
imaginary part of a complex number.\\

For a vector $\vx$ in $\IR^n$ we write $|\vx|$ 
for the euclidean length of $\vx$.
$\Da$ and $\M$ will always stand for a natural
number while $L$ will denote a non-negative real number.
\begin{definition}
Let $S$ be a subset of $\IR^\Da$ and let $\ka$ be an integer with $0\leq \ka\leq \Da$. We say $S$ is in Lip$(\Da,\ka,\M,L)$ 
if there are $\M$ maps 
$\phi:[0,1]^{\Da-\ka}\longrightarrow \IR^\Da$
satisfying a Lipschitz condition
\begin{alignat}3
\label{lipcond1}
|\phi(\vx)-\phi(\vy)|\leq L|\vx-\vy|
\end{alignat}
such that $S$ is covered by the images
of the maps $\phi$. For $\ka=\Da$ this is to be 
interpreted simply as the finiteness of
the set $S$.
\end{definition}
We call $L$ a Lipschitz constant for $\phi$.
For $\ka=\Da$ we interpret $[0,1]^{\Da-\ka}$ as $\{0\}\subseteq\IR$ and then
$\M>0$ is simply an upper bound for the cardinality of $S$
and any non-negative $L$ is allowed. By definition the empty set
lies in Lip$(\Da,\ka,\M,L)$ for any natural numbers $\Da$,$\M$ any $\ka$ in 
$\{0,1,2,...,\Da\}$ and any 
non-negative $L$. However, in our applications $\ka$ will be 
$1$ or $2$.
\begin{definition}[Adelic Lipschitz system]\label{defALS}
An adelic Lipschitz system ($\ALS$) 
$\en_K$ or simply $\en$ on $K$ (of dimension $n$) is
a set of continuous maps
\begin{alignat}3
\label{Abb1}
N_v: K_v^{n+1}\rightarrow \Gamma_v \quad v \in M_K
\end{alignat}
such that
\begin{alignat*}3
(i)&\text{ } N_v({\vz })=0 \text{ if and only if } {\vz} ={\v 0},\\
(ii)&\text{ } N_v(\omega {\vz})=|\omega|_v N_v({\vz}) \text{ for all
$\omega$ in $K_v$ and all ${\vz}$ in $K_v^{n+1}$},\\
(iii)& \text{ if } v \mid \infty:
\{{\vz}: N_v({\vz})=1\} \text{
is in Lip$(d_v(n+1),1,\M_v,L_v)$ for some $\M_v, L_v$},\\
(iv)& \text{ if } v \nmid \infty: N_v({\vz_1}+{\vz_2})
\leq \max\{N_v({\vz}_1),N_v({\vz}_2)\} \text{ for all ${\vz}_1,{\vz}_2$ in $K_v^{n+1}$}.
\end{alignat*}
Moreover we assume that only a finite number
of the functions $N_v(\cdot)$ are different
from 
\begin{alignat}3
\label{Nvmaxnorm}
N_v(\vz)=\max\{|z_0|_v,...,|z_n|_v\}.
\end{alignat}
\end{definition}
If we consider only the functions $N_v$ for $v\mid\infty$
then we get an $(r,s)$-Lipschitz system (of dimension $n$)
in the sense of Masser and Vaaler \cite{1}.
With $\M_v$ and $L_v$ from $(iii)$ we define 
\begin{alignat*}1
\M_{\en}&=\max_{v \mid \infty}M_v,\\
L_{\en}&=\max_{v \mid \infty}L_v.
\end{alignat*}
We say that $\en$
is an $\ALS$ with associated constants $\M_{\en}, L_{\en}$.
For $v\mid\infty$ we call $N_v$ a \it Lipschitz distance function \rm  
(of dimension $n$). 
The set defined in $(iii)$ is the boundary 
of the set ${\bf B}_v=\{{\vz};N_v({\vz})<1\}$
and therefore ${\bf B}_v$ is a bounded symmetric open star-body
in $\IR^{n+1}$ or $\IC^{n+1}$ (see also \cite{1} p.431). In particular ${\bf B}_v$ has a finite volume $V_v$.\\

Let us consider the system
where $N_v$ is as in (\ref{Nvmaxnorm}) for all places $v$.
If $v$ is an infinite place then  
${\bf B}_v$ is a
cube for $d_v=1$ and the complex analogue
if $d_v=2$. Their boundaries are clearly
in Lip$(d_v(n+1),1,M_v,L_v)$ most naturally
with $M_v=2n+2$ maps and $L_v=2$
if $d_v=1$ and 
with $M_v=n+1$ maps and for example $L_v=2\pi\sqrt{2n+1}$
if $d_v=2$.
This system is the standard example for an 
adelic Lipschitz system.\\

We claim that for any $v\in M_K$ there is a $c_v$ in the value group
$\Gamma_v^*=\Gamma_v\backslash\{0\}$ with
\begin{alignat}3
\label{Nineq1}
N_v({\vz})\geq c_v\max\{|z_0|_v,...,|z_n|_v\}
\end{alignat}
for all $\vz=(z_0,...,z_n)$ in $K_v^{n+1}$.
For if  $v$ is archimedean then ${\bf B}_v$ is
bounded open and contains the origin.
Since $\Gamma_v^*$ contains arbitrary small
positive numbers the
claim follows by $(ii)$.
Now for $v$ non-archimedean $N_v$ and $\max\{|z_0|_v,...,|z_n|_v\}$ define norms on 
the vector space $K_v^{n+1}$ over the complete field $K_v$.
But on a finite dimensional vector space over a complete
field all norms are equivalent (\cite{2} Corollary 5. p.93)
hence (\ref{Nineq1}) remains true for a suitable choice
of $c_v$.\\

So let $\en$ be an $\ALS$ on $K$ of dimension $n$. For
every $v$ in $M_K$ let
$c_v$ be an element of $\Gamma_v^*$,
such that $c_v\leq 1$ and (\ref{Nineq1}) holds.
Due to (\ref{Nvmaxnorm}) we can assume that $c_v\neq 1$ only for a finite number of places $v$.
Define
\begin{alignat}3
\label{defcfin}
C^{fin}_{\en}&=\prod_{v}c_v^{-\frac{d_v}{d}}\geq 1
\end{alignat}
where the product runs over all finite $v$.
Next for the infinite part we define
\begin{alignat}3
\label{defcinf}
C^{inf}_{\en}&=\max_{v}\{c_v^{-1}\}\geq 1
\end{alignat}
where now $v$ runs over all infinite $v$.\\

Multiplying the finite and the infinite part 
gives rise to another constant 
\begin{alignat}3
\label{defc}
C_{\en}&=C^{fin}_{\en}C^{inf}_{\en}.
\end{alignat}
It will turn out that besides $\M_{\en}$ and $L_{\en}$ this is another important quantity for an $\ALS$. So we say that \it $\en$
is an $\ALS$ with associated constants $C_{\en},\M_{\en},L_{\en}$.\rm 
\begin{remark}\label{normconvex}
Let $v$ be an infinite place.
Suppose $N_v:K_v^{n+1}\longrightarrow [0,\infty)$
defines a norm, so that 
$N_v({\vz_1}+{\vz_2})
\leq N_v({\vz}_1)+N_v({\vz}_2)$. Then
${\bf B}_v$ is convex and (\ref{Nineq1}) combined with 
(\ref{defcfin}), (\ref{defcinf}) and (\ref{defc}) shows that ${\bf B}_v$ lies in
$B_0(C_{\en}\sqrt{n+1})$.
This implies (see Theorem A.1 in \cite{WiThesis}) that  $\partial{\bf B}_v$ lies in Lip$(d_v(n+1),1,1,8{d_v}^2(n+1)^{5/2}C_{\en})$.
\end{remark}

We denote by $\sigma_1,...,\sigma_d$ 
the embeddings from $K$ to $\IR$ or
$\IC$ respectively, ordered such that
$\sigma_{r+s+i}=\overline{\sigma}_{r+i}$ for 
$1\leq i \leq s$.
We write
\begin{alignat}3
\label{sigd}
&\sigma:K\longrightarrow \IR^r\times\IC^s\\
\nonumber&\sigma(\alpha)=(\sigma_1(\alpha),...,\sigma_{r+s}(\alpha)).
\end{alignat}
Sometimes it will be more readable to omit the brackets
and simply to write $\sigma\alpha$.
We identify $\IC$ in the usual way with $\IR^2$
and extend $\sigma$ componentwise to get a map
\begin{alignat}3
\label{sigD}
\sigma:K^{n+1}\longrightarrow \IR^{D}
\end{alignat}
where $D=d(n+1)$.
On $\IR^{D}$ we use $|\cdot|$ for the
usual euclidean norm.
Let $\sigma_v$ be the canonical embedding of $K$ in $K_v$
again extended componentwise on $K^{n+1}$. 
\begin{definition}
Let $\D\neq 0$ be a fractional ideal in $K$ and let $\en$ be
an $\ALS$ of dimension $n$. We define
\begin{alignat}3
\label{defLamen}
\Lamen(\D)=\{\sigma(\balf); \balf \in K^{n+1}, 
N_v(\sigma_v\balf)\leq |\D|_v \text{ for all finite }v \}
\end{alignat}
where $|\D|_v=N\pw_v^{-\frac{\ord_{\pw_v}\D}{d_v}}$.
\end{definition}
It is easy to see that 
$\Lamen(\D)$ is an additive subgroup of $\IR^D$.
Now assume $B\geq 1$ and 
$|\sigma(\balf)|\leq B$; then (\ref{Nineq1}) implies
$H(\balf)^d\leq (BC_{\en}^{fin})^d N\D^{-1}$ and by Northcott's Theorem
we deduce that $\Lamen(\D)$ is discrete.
The same argument as for (\ref{Nineq1}) yields positive real numbers $C_v$, one for each non-archimedean place $v\in M_K$,
with $N_v({\vz})\leq C_v\max\{|z_0|_v,...,|z_n|_v\}$ for all $\vz=(z_0,...,z_n)$ in $K_v^{n+1}$
and $C_v=1$ for all but finitely many non-archimedean $v\in M_K$.
Thus there exists an ideal $\C_1\neq 0$ in $\Oseen_K$ with 
$|\C_1|_v\leq 1/C_v$ for all non-archimedean places $v\in M_K$.
This means that $\sigma(\C_1\D)^{n+1}\subseteq \Lamen(\D)$.
It is well-known that the additive group $\sigma(\C_1\D)^{n+1}$ has maximal rank in $\IR^D$.
Therefore $\Lamen(\D)$ is  a discrete additive subgroup of $\IR^D$
of maximal rank. Hence $\Lamen(\D)$
is a lattice. Notice that for $\varepsilon$ in $K^*$ one has
\begin{alignat}3
\label{deltawelldef1}
\det\Lamen((\varepsilon)\D)=
|N_{K/\IQ}(\varepsilon)|^{n+1}\det\Lamen(\D).
\end{alignat}
Therefore
\begin{alignat}3
\label{idkl}
\Delta_{\en}(\mathcal{D})=\frac{\det\Lambda_{\en}(\D)}
{N\D^{n+1}}
\end{alignat}
is independent of the choice of the representative $\D$
but depends only on the  ideal class $\mathcal{D}$ of $\D$. 
Let $\Cl$ be the set of ideal classes.
We define
\begin{alignat}3
\label{defVfin1}
V_{\en}^{fin}=2^{-s(n+1)}|\Delta_K|^{\frac{n+1}{2}}h_K^{-1}
\sum_{\mathcal{D} \in \Cl}\Delta_{\en}(\mathcal{D})^{-1}
\end{alignat}
for the finite part.
The infinite part is defined by 
\begin{alignat*}3
V_{\en}^{inf}=\prod_{v \mid \infty}V_{v}.
\end{alignat*}
By virtue of (\ref{Nineq1}) we observe that
\begin{alignat}3
\label{Vinfbou1}
V_{\en}^{inf}=\prod_{v|\infty} V_v\leq 
\prod_{v|\infty}(2 C^{inf}_{\en})^{d_v(n+1)}=
(2 C^{inf}_{\en})^{d(n+1)}.
\end{alignat} 
We multiply the finite and the infinite part
to get a global volume 
\begin{alignat}3
\label{defVen}
V_{\en}=V_{\en}^{inf}V_{\en}^{fin}.
\end{alignat}
We proceed as in Masser and Vaaler's article 
to obtain a  height.
Let $\en$ be an $\ALS$ on $K$
of dimension $n$. Then the height $H_{\en}$ 
on $K^{n+1}$ is defined by
\begin{alignat*}3
\hen(\balf)=\prod_v N_v(\sigma_v(\balf))^{\frac{d_v}{d}}
\end{alignat*}
where the product is taken over all $v \in M_K$.
The product over the archimedean absolute values will be 
denoted by $\hen^{inf}(\cdot)$ and the one over the non-archimedean absolute values by $\hen^{fin}(\cdot)$.
The product formula
together with $(ii)$ implies that $\hen$ is well-defined on $\IP^n(K)$.
\begin{remark}
Multiplying (\ref{Nineq1}) over all places with 
suitable multiplicities yields 
\begin{alignat}3
\label{HAquiv}
\hen(\balf)\geq C_{\en}^{-1} H(\balf).
\end{alignat}
Thanks to Northcott's Theorem it follows that
$\{P\in \IP^n(K); \hen(P)\leq X\}$ is a finite set for
each $X$ in $[0,\infty)$.
\end{remark}
Let $k$ be a number field and let $K$ be a finite extension of $k$.
For a point $P=(\alpha_0:...:\alpha_n)$ in $\IP^n(K)$ let
$k(P)=k(...,\alpha_i/\alpha_j,...)$ $(0\leq i,j\leq n; \alpha_j\neq 0)$.
We write $\IP^n(K/k)$ for the set of primitive points
\begin{alignat*}3
\IP^n(K/k)=\{P\in \IP^n(K);k(P)=K\}
\end{alignat*}
and 
\begin{alignat*}3
Z_{\en}(\IP^n(K/k),X)=|\{P\in \IP^n(K/k);\hen(P)\leq X\}|
\end{alignat*}
for its counting function with respect to the adelic Lipschitz height $\hen$.\\

Before stating the main result we have to introduce some more basic notation.\\

First of all we need the Schanuel constant from (\ref{Scha1}) 
\begin{alignat}3
\label{Schanuelkonst}
S_K(n)=\frac{h_KR_K}{\wK\zeta_K(n+1)}
\left(\frac{2^{r_K}(2\pi)^{s_K}}{\sqrt{|\Delta_K|}}\right)^{n+1}
(n+1)^{r_K+s_K-1}.
\end{alignat}
Here $h_K$ is the class number, $R_K$ the regulator,
$\wK$ the number of roots of unity in $K$, $\zeta_K$
the Dedekind zeta-function of $K$, $\Delta_K$ the discriminant,
$r_K$ is the number of real embeddings of $K$ and $s_K$ is
the number of pairs of distinct complex conjugate embeddings of $K$.\\

Moreover we need a set $G(K/k)$ and a new invariant $\delta_g(K/k)$.
First for fields $k,K$ with $k\subseteq K$ and $[K:k]=e$ we define
\begin{alignat*}3
G(K/k)=
\{[K_0:k]; \text{$K_0$ is a field with $k\subseteq K_0\subsetneq K$}\}
\end{alignat*}
if $k\neq K$, and we define  
\begin{alignat*}3
G(K/k)=\{1\}
\end{alignat*}
if $k=K$. Clearly $|G(K/k)|\leq e$.
Then for an integer $g\in G(K/k)$ we define
\begin{alignat}3
\label{delta2}
\delta_g(K/k)=
\underset{\alpha,\beta}\inf\{H(1,\alpha,\beta); k(\alpha,\beta)=K,
[k(\alpha):k]=g\}\geq 1
\end{alignat}
and 
\begin{alignat}3
\label{mu2}
\mu_g=m(e-g)(n+1)-1.
\end{alignat}
It will be convenient to use Landau's $O$-notation.
For non-negative real functions $f(X), g(X), h(X)$ we say that
$f(X)=g(X)+O(h(X))$ as $X>X_0$ tends to infinity if there is a constant $C_0$ such that
$|f(X)-g(X)|\leq C_0h(X)$ for each $X>X_0$.
In Section \ref{subseccounting} we will use Vinogradov's  $\ll$ notation. An expression
$A\ll B$ or equivalently $B\gg A$ means that there is a positive constant $c$ depending only on 
$n$ and $d$ such that $A\leq cB$.

\section{The main result}\label{chap3sec2}
The following theorem is the main result of this article. It gives an asymptotic estimate of the counting function $Z_{\en}(\IP^n(K/k),X)$
with a particularly good error term.
\begin{theorem}\label{prop3}
Let $k,K$ be number fields with $k\subseteq K$ and $[K:k]=e$, $[k:\IQ]=m$, $[K:\IQ]=d$. Let $\en$ be an adelic Lipschitz system of dimension $n$ 
on $K$ with associated constants $C_{\en},L_{\en},\M_{\en}$.
Write
\begin{alignat*}3
A_{\en}&=\M_{\en}^{d}(C_{\en}(L_{\en}+1))^{d(n+1)-1}
\end{alignat*}
and 
\begin{alignat*}3
B=A_{\en}R_Kh_K\sum_{g\in G(K/k)}\delta_{g}(K/k)^{-\mu_g}.
\end{alignat*}
Then as $X>0$ tends to infinity we have
\begin{alignat*}3
Z_{\en}(\IP^n(K/k),X)=
2^{-r_K(n+1)}\pi^{-s_K(n+1)}V_{\en}S_K(n)X^{d(n+1)}
+O(B X^{d(n+1)-1}\L_{\en}),
\end{alignat*}
where
\begin{alignat*}3
\L_{\en}&=\log\max\{2,2C_{\en}X\} \text{ if }(n,d)=(1,1)\text{ and }\L_{\en}=1 \text{ otherwise}
\end{alignat*}
and the implied constant in the $O$ depends only on $n$ and $d$.
\end{theorem}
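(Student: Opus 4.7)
The plan is to follow the Schanuel-Masser-Vaaler framework, suitably adapted to the adelic Lipschitz setting of Section~\ref{secALH} and to the primitive subset $\IP^n(K/k)$. First I would lift from $\IP^n(K)$ to $K^{n+1}\setminus\{\v0\}$ by fixing fractional ideal representatives $\D_1,\ldots,\D_{h_K}$ of the ideal classes of $\Oseen_K$ and a fundamental domain $F$ in $\IR^{r_K+s_K}$ for the unit action on the archimedean log space. Every projective point has a representative, unique up to the $\wK$ roots of unity, whose coordinate ideal equals some $\D_i$ and whose log profile lies in $F$. Under $\sigma\colon K^{n+1}\to\IR^D$ with $D=d(n+1)$ this turns the count into
\[
Z_{\en}(\IP^n(K/k),X)=\frac{1}{\wK}\sum_{i=1}^{h_K}\bigl|\Lamen(\D_i)\cap S_F(T)\cap\mathcal{P}\bigr|,
\]
where $T=X^d$, $S_F(T)$ is the bounded region of Section~\ref{thebasicset}, and $\mathcal{P}$ encodes the primitivity condition $k(\alpha_0:\ldots:\alpha_n)=K$.

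Next I would invoke the general lattice counting lemma of Section~\ref{1subsec3}, which for $S\in\mathrm{Lip}(D,1,M,L)$ gives
\[
\bigl||\Lambda\cap S|-\Vol(S)/\det\Lambda\bigr|\ll M\bigl(L/\lambda_1(\Lambda)\bigr)^{D-1}.
\]
Here $\Vol(S_F(T))$ unfolds to the classical Schanuel integral, and summation over ideal classes produces, via~(\ref{Schanuelkonst}), (\ref{defVfin1}) and (\ref{defVen}), the announced main term $2^{-r_K(n+1)}\pi^{-s_K(n+1)}V_{\en}S_K(n)X^{d(n+1)}$. The main obstacle is that the naive Lipschitz parametrization of $\partial S_F(T)$ has constant scaling like an exponential in the regulator, which would ruin the error term. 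The remedy developed in Section~\ref{schmidtstrick} extends Schmidt's device from $d=2$ to arbitrary degree: decompose $F$ into finitely many cells (total count bounded in terms of $d$ and $n$) and on each cell precompose the boundary parametrization with a diagonal change of variable coming from a suitable unit, preserving $\det\Lamen(\D_i)$ while taming the Lipschitz constant to order $C_{\en}(L_{\en}+1)T^{1/D}$. Combining with the $M_v\leq M_{\en}$ maps parametrizing each archimedean unit sphere $\{N_v=1\}$ gives a total of $\ll M_{\en}^d$ Lipschitz pieces.

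The primitivity condition then enters through a lower bound on $\lambda_1$ of the transformed lattice, restricted to vectors representing primitive points. The geometric content of Section~\ref{Estminima} converts a short such vector into a pair $\alpha,\beta\in K$ with $k(\alpha,\beta)=K$ and $[k(\alpha):k]=g$ for some $g\in G(K/k)$, together with a bound on $H(1,\alpha,\beta)$ in terms of that vector's length; inverting the definition~(\ref{delta2}) yields $\lambda_1\gg\delta_g(K/k)^{c_g}$ with the exponent tuned so that $M_{\en}^d\bigl(C_{\en}(L_{\en}+1)T^{1/D}\bigr)^{D-1}\lambda_1^{-(D-1)}$ collapses to $A_{\en}X^{d(n+1)-1}\delta_g(K/k)^{-\mu_g}$ with $\mu_g=m(e-g)(n+1)-1$ as in~(\ref{mu2}). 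Summing the error over ideal classes (contributing $h_K$), over the Schmidt cells and unit-sphere maps (absorbed in $M_{\en}^d$ together with the factor $R_K$ from integrating over $F$), and over $g\in G(K/k)$ yields the factor $B=A_{\en}R_Kh_K\sum_g\delta_g(K/k)^{-\mu_g}$ multiplying $X^{d(n+1)-1}$. The logarithmic factor $\L_{\en}$ in the boundary case $(n,d)=(1,1)$ arises because the Schanuel integral diverges there and must be cut at the height bound, translated to a cut in $|\sigma(\balf)|$ via~(\ref{HAquiv}), which places $C_{\en}X$ inside the logarithm. Assembling main and error contributions gives Theorem~\ref{prop3}.
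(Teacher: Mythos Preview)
Your outline captures the broad Schanuel--Masser--Vaaler architecture, but several structural steps are either missing or misidentified, and at least one of them is the heart of the proof.

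\textbf{M\"obius over ideals is absent.} Lemma~\ref{ZKHX} reduces the count to $Z^*(\Lamst(\A),S_F(T))$ with $T=N\A^{1/d}X$ (not $T=X^d$), where $\Lamst(\A)$ requires $N_v(\sigma_v\balf)=|\A|_v$ at all finite $v$. The lattice you can actually count in is $\Lamen(\A\B)$, defined by the inequality $\leq |\A\B|_v$. The passage from one to the other is M\"obius inversion over non-zero ideals $\B\subseteq\Oseen_K$ as in~(\ref{Moebinv1}); summing the resulting error over $\B$ is what produces the convergent factor $\sum_\B N\B^{-(n+1-1/d)}$ for $(n,d)\neq(1,1)$, and its divergence at $(n,d)=(1,1)$ (not a divergent Schanuel integral) is the true source of the logarithm $\L_{\en}$ after truncating at $N\B\leq 2C_{\en}X$.

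\textbf{Primitivity does not enter through $\lambda_1$.} Primitive vectors do not form a sublattice, so there is no ``$\lambda_1$ restricted to primitive points'' to feed into the $(L/\lambda_1)^{D-1}$ bound. The paper's mechanism is different and is the key new idea: for the transformed lattice $\tau\sigma(\C_0^{-1}\D)$ one fixes $v_1,\ldots,v_d$ realising the minima, defines $l$ minimal with $k(\theta_2/\theta_1,\ldots,\theta_l/\theta_1)=K$, and proves (Lemma~\ref{lemma2.5}) that every \emph{primitive} vector has length $\geq\lambda_l$, while (Lemma~\ref{lemma2.7}) $\lambda_l\gg (C_{\en}^{fin})^{-1}N\D^{1/d}\delta_g(K/k)$ with $g=[K_0:k]$. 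The first minimum $\lambda_1$ carries no $\delta_g$ information at all; it is only $\gg N\D^{1/d}$.

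\textbf{One needs all the minima, and a separate count of non-primitive points.} Proposition~\ref{prop4} is proved by applying Theorem~\ref{TWi1} (the version with $\max_i L^i/(\lambda_1\cdots\lambda_i)$, not the cruder Corollary~\ref{TMV1}) to the full lattice $\tau\Lam(\A\B)$; the block structure $\nu_j=\lambda_{\lceil j/(n+1)\rceil}$ from Lemma~\ref{minpowlatt} is what converts that maximum into $T^{D-1}/(\lambda_1^{(l-1)(n+1)}\lambda_l^{(d-l+1)(n+1)-1})$ and hence into $\delta_g^{-\mu_g}$. Then one still has to subtract the non-primitive points: these are shown to have each $\tau\sigma\omega_j$ lying in a hyperplane of $\IR^d$ determined by two coincident embeddings, and are bounded by a second application of Theorem~\ref{TWi1} in dimension $d$. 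Your sketch collapses all of this into a single $\lambda_1^{-(D-1)}$ step, which cannot produce the exponent $\mu_g$.

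Finally, two scaling corrections: $S_F(T)=TS_F(1)$ by~(\ref{homexp1}), so after Schmidt's partition the Lipschitz constant for $\partial S_{F(\bf 0)}(T)$ is of order $(L_{\en}+C_{\en}^{inf})T$, not $T^{1/D}$; and the factor $R_K$ in the error term comes from the number $t\ll R_K$ of Schmidt cells in~(\ref{tAbsch}), not from integrating over $F$.
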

With $k=K$ Theorem \ref{prop3}  yields a more general version of the Proposition in \cite{1}
with an explicit error term regarding the field $K$.
Still with $k=K$, let us choose the standard $\ALS$ with $N_v$ as in (\ref{Nvmaxnorm}) for all places $v$.
Then $\hen$ is just the Weil height on $\IP^n(K)$. Moreover 
$\Lamen(\D)=\sigma(\D)^{n+1}$ so that $\det \Lamen(\D)=(2^{-s_K}N(\D)\sqrt{|\Delta_K|})^{n+1}$ and therefore $V_{\en}^{fin}=1$.
Furthermore $V_{\en}^{inf}=\prod_{v \mid \infty}V_{v}=2^{r_K(n+1)}\pi^{s_K(n+1)}$ and thus $V_{\en}=2^{r_K(n+1)}\pi^{s_K(n+1)}$.
Hence we recover Schanuel's Theorem, but with an explicit 
error term with respect to the field. A more precise
version can be obtained by counting primitive points 
(over $\IQ$) for all subfields of $K$ (see \cite{WiThesis} Corollary 3.2).\\

Now back to the general case where $k$ is an arbitrary fixed subfield of $K$. Let us choose the $\ALS$ with $N_v$ as in (\ref{Nvmaxnorm})
if $v\nmid \infty$ and 
$N_v(\vz)=M(z_0x^n+z_1x^{n-1}+...+z_n)$ as in (2.7) of \cite{1} if $v \mid \infty$. Here $M$ denotes the Mahler measure.
The continuity of $M$ as a function of the coefficients was already shown by Mahler (see Lemma 1 in \cite{11}).
Masser and Vaaler have shown that the conditions $(i)$, $(ii)$ and $(iii)$ in Definition \ref{defALS} are satisfied and clearly $(iv)$
holds as well. Masser and Vaaler have also calculated $V_{\en}^{inf}=2^{r_K(n+1)}\pi^{s_K(n+1)}V_{\IR}(n)^{r_K}V_{\IC}(n)^{s_K}$
where $V_{\IR}(n)$ and $V_{\IC}(n)$ are certain rational numbers defined in \cite{1}. As in the previous example we have $V_{\en}^{fin}=1$
and therefore $V_{\en}=2^{r_K(n+1)}\pi^{s_K(n+1)}V_{\IR}(n)^{r_K}V_{\IC}(n)^{s_K}$.
Here Theorem \ref{prop3} counts the monic polynomials
$f=\alpha_0x^n+\alpha_1x^{n-1}+...+\alpha_n$ in $K[x]$ of degree at most $n$ whose coefficients $\alpha_0,\alpha_1,...,\alpha_n$
generate the whole field $K$ over $k$ and whose global absolute Mahler measure $M_0(f)=\hen(\alpha_0:...:\alpha_n)$ does not exceed $X$.
This adelic Lipschitz system will be used to deduce the main result in \cite{art4}.\\

In \cite{8} Roy and Thunder introduced
the quantity 
\begin{alignat*}3
\delta(K)=\underset{\alpha}\inf\{H(1,\alpha);K=\IQ(\alpha)\}.
\end{alignat*}
Generalizing this definition to extensions $K/k$
of number fields $k,K$
\begin{alignat*}3
\delta(K/k)=\underset{\alpha}\inf\{H(1,\alpha);K=k(\alpha)\}
\end{alignat*}
we can  give a simpler error term in Theorem \ref{prop3}.
Of course $\delta_1(K/k)=\delta(K/k)$ but we do not use this fact.
We define the integers
\begin{alignat*}3
\ggg=\max_{g\in G} g
\end{alignat*}
and
\begin{alignat}3
\label{mu}
\mu=m(e-\ggg)(n+1)-1.
\end{alignat}
Note that $1\leq \ggg \leq \max\{1,e/2\}$ and $\mu=\min_{g\in G}\mu_g\geq d(n+1)/2-1$.
We have the following 
\begin{corollary}\label{prop2}
Let $k,K$ be number fields with $k\subseteq K$ and $[K:k]=e$, $[k:\IQ]=\m$, $[K:\IQ]=d$. Let $\en$ be an adelic Lipschitz system of dimension $n$ 
on $K$ with associated constants $C_{\en},L_{\en},\M_{\en}$
and write
\begin{alignat*}3
A_{\en}&=\M_{\en}^{d}(C_{\en}(L_{\en}+1))^{d(n+1)-1}.
\end{alignat*}
Then as $X>0$ tends to infinity we have
\begin{alignat*}3
Z_{\en}(\IP^n(K/k),X)=
&2^{-r_K(n+1)}\pi^{-s_K(n+1)}V_{\en}S_K(n)X^{d(n+1)}\\
+&O(A_{\en} R_K h_K\delta(K/k)^{-\mu}X^{d(n+1)-1}\L_{\en})
\end{alignat*}
where 
\begin{alignat*}3
\L_{\en}&=\log\max\{2,2C_{\en}X\} \text{ if }(n,d)=(1,1)\text{ and }\L_{\en}=1 \text{ otherwise}
\end{alignat*}
and the implied constant in the $O$ depends only on $n$ and $d$.
\end{corollary}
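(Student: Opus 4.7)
The plan is to deduce Corollary~\ref{prop2} directly from Theorem~\ref{prop3}. Theorem~\ref{prop3} already provides the correct main term and the correct shape of the error term; the only difference is that the factor $\sum_{g\in G(K/k)}\delta_g(K/k)^{-\mu_g}$ appearing in $B$ must be replaced by a constant multiple of $\delta(K/k)^{-\mu}$. It therefore suffices to establish
\begin{alignat*}3
\sum_{g\in G(K/k)}\delta_g(K/k)^{-\mu_g}\leq C(d,n)\,\delta(K/k)^{-\mu}
\end{alignat*}
for some constant $C(d,n)$ depending only on $d$ and $n$; absorbing $C(d,n)$ into the implied $O$-constant then finishes the proof.

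The main step will be the lower bound $\delta_g(K/k)\geq c(d)\,\delta(K/k)$ for every $g\in G(K/k)$, where $c(d)>0$ depends only on $d$. Granting this, $\delta_g(K/k)\geq 1$ together with $\mu_g\geq\mu$ gives
\begin{alignat*}3
\delta_g(K/k)^{-\mu_g}\leq \delta_g(K/k)^{-\mu}\leq c(d)^{-\mu}\delta(K/k)^{-\mu};
\end{alignat*}
since $\mu\leq d(n+1)$ and $|G(K/k)|\leq e\leq d$, summing over $g\in G(K/k)$ yields the desired inequality with a constant that depends only on $d$ and $n$.

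To establish this lower bound, fix $(\alpha,\beta)$ with $k(\alpha,\beta)=K$ and $[k(\alpha):k]=g$, and produce from it a single generator of $K/k$ of comparable height. A standard primitive-element argument shows that $k(\alpha+c\beta)=K$ for every $c\in k$ outside a finite bad set, the bad values being $c=(\sigma(\alpha)-\tau(\alpha))/(\tau(\beta)-\sigma(\beta))$ for pairs of distinct $k$-embeddings $\sigma,\tau$ of $K$ (one checks $\sigma(\beta)\neq\tau(\beta)$, since otherwise $\sigma=\tau$ on $k(\alpha,\beta)=K$). The bad set has size at most $\binom{e}{2}\leq\binom{d}{2}$, so some integer $c\in\{0,1,\dots,\binom{d}{2}\}$ makes $k(\alpha+c\beta)=K$, whence $\delta(K/k)\leq H(1,\alpha+c\beta)$. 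A place-by-place estimate, using $|\alpha+c\beta|_v\leq(1+c)\max(|\alpha|_v,|\beta|_v)$ at archimedean $v$ and the ultrametric inequality together with $|c|_v\leq 1$ at non-archimedean $v$ (since $c\in\IZ$), gives $H(1,\alpha+c\beta)\leq(1+c)H(1,\alpha,\beta)$. Taking infima over admissible $(\alpha,\beta)$ yields $\delta(K/k)\leq(1+\binom{d}{2})\delta_g(K/k)$, as required.

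The subtle point, and the only real obstacle, is that the height estimate above must come out linear in $H(1,\alpha,\beta)$ rather than quadratic: the naive bound $H(a+cb)\leq 2H(c)H(a)H(b)$ together with $H(1,\alpha)H(1,\beta)\leq H(1,\alpha,\beta)^2$ would give only $\delta(K/k)\ll_d\delta_g(K/k)^2$ and hence a square-root loss in the final answer, which is insufficient to recover $\delta(K/k)^{-\mu}$. Handling the two coordinates $\alpha$ and $\beta$ jointly at each place, rather than separating them multiplicatively, is what avoids this loss.
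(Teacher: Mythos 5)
Your argument follows the same route as the paper's: reduce the corollary to the bound $\delta_g(K/k) \geq c(d)\,\delta(K/k)$, produced by exhibiting a small-integer linear combination of two generators that itself generates $K/k$, and then compare $\mu_g$ with $\mu$. The primitive-element step you use (explicit avoidance of a bad set of size at most $\binom{e}{2}$) is a standard variant of the paper's Lemma~\ref{lemmaprimele}, which packages the same idea as a polynomial non-vanishing argument applied to the form $m_1\alpha_1+m_2\alpha_2$; both yield a constant depending only on $d$, and both achieve the crucial \emph{linear} height bound $H(1,\xi)\ll_d H(1,\alpha,\beta)$ rather than a quadratic one.

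Two small points. First, your chain $\delta_g^{-\mu}\leq c(d)^{-\mu}\delta^{-\mu}$ implicitly uses $\mu\geq 0$; when $K=k$ one has $G(K/k)=\{1\}$ and $\mu=\mu_1=-1$, so the step reverses (it would require $1\leq c(d)$ with $c(d)<1$). The target inequality is still trivially true there since $\delta=\delta_1=1$, but the paper disposes of $K=k$ separately, and you should do the same. Second, the parenthetical that $\sigma(\beta)\neq\tau(\beta)$ for all distinct $k$-embeddings $\sigma,\tau$ is not actually forced: one may have $\sigma(\beta)=\tau(\beta)$ with $\sigma(\alpha)\neq\tau(\alpha)$ (what is forced is that they cannot \emph{both} agree, else $\sigma=\tau$ on $K$). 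Such a pair simply contributes no bad value of $c$ since then $\sigma(\alpha+c\beta)\neq\tau(\alpha+c\beta)$ for every $c$, so the count of at most $\binom{e}{2}$ bad values survives and the conclusion is unaffected.
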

To see that Theorem \ref{prop3} implies 
Corollary \ref{prop2} we need 
the following well-known argument. Since it will
be used also in the Section \ref{Estminima}, we give a proof here.
\begin{lemma}\label{lemmaprimele}
Let $F$ be a field of characteristic zero
and $L$ a finite extension of relative degree $e$ 
generated by  $\alpha_1,...,\alpha_t$. 
Then there are integers $0\leq m_1,...,m_t<e$
such that $F(\alpha)=L$ for
$\alpha=\sum_{j=1}^{t}m_j\alpha_j$.
\end{lemma}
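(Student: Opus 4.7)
My plan is to proceed by induction on $t$. The base case $t=1$ is immediate: since $L=F(\alpha_1)$, I take $m_1=1$ when $e\geq 2$ and $m_1=0$ when $e=1$. For the inductive step with $t\geq 2$, I would first form the intermediate field $L'=F(\alpha_1,\ldots,\alpha_{t-1})$ of relative degree $e'=[L':F]$ over $F$; since $e'$ divides $e$ we have $e'\leq e$. The induction hypothesis applied to $L'/F$ with the generators $\alpha_1,\ldots,\alpha_{t-1}$ produces integers $0\leq m_1,\ldots,m_{t-1}<e'\leq e$ such that $\beta:=\sum_{j<t}m_j\alpha_j$ satisfies $F(\beta)=L'$. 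If $L'=L$ I would simply take $m_t=0$ and be done; otherwise $L=L'(\alpha_t)$ with $[L:L']\geq 2$, and it remains to choose $m_t\in\{0,1,\ldots,e-1\}$ so that $F(\beta+m_t\alpha_t)=L$.

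For this last step I would appeal to the pair case of the primitive element theorem. The crucial injectivity observation is that if two distinct values $c_1,c_2\in F$ satisfied $F(\beta+c_1\alpha_t)=F(\beta+c_2\alpha_t)$, then $(c_1-c_2)\alpha_t$ would lie in the common subfield, forcing both $\alpha_t$ and then $\beta$ into it, and so the subfield would coincide with $L'(\alpha_t)=L$. Consequently the map $c\mapsto F(\beta+c\alpha_t)$, restricted to those $c\in F$ yielding a proper subfield of $L$, injects into the finite set of proper intermediate fields of $L/F$.

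The main obstacle is verifying that strictly fewer than $e$ of the $e$ candidate integers in $\{0,1,\ldots,e-1\}$ are bad in this sense. To control the count I would use that for $c\neq 0$ any bad subfield $M=F(\beta+c\alpha_t)$ necessarily satisfies $M\cdot L'=L$, since $c\alpha_t=(\beta+c\alpha_t)-\beta$ shows $\alpha_t\in M\cdot L'$ and hence $L'(\alpha_t)=L\subseteq M\cdot L'$. This substantially restricts the achievable bad subfields to $L'$ itself (from $c=0$) together with the ``complements'' of $L'$ inside $L$, and a Galois-theoretic tally of such complements gives the required inequality, producing a good $m_t$ in the prescribed range.
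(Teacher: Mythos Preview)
Your inductive reduction to the pair case $L=F(\beta,\alpha_t)$ is sound, and the injectivity observation is correct. The gap is in the last step: a bare ``tally of complements'' of $L'$ does not give fewer than $e$. For instance, if $L/F$ is Galois with group $(\mathbb{Z}/2)^3$ and $L'$ corresponds to a subgroup of order~$2$, then there are $10$ proper subfields $M\subsetneq L$ with $M\cdot L'=L$ (six of index~$4$ and four of index~$2$), while $e=8$. So the set into which you inject is too large to force a good $m_t\in\{0,\ldots,e-1\}$.

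The clean fix is the embedding count: write $\sigma_1=\mathrm{id},\ldots,\sigma_e$ for the $F$-embeddings of $L$; then $F(\beta+c\alpha_t)=L$ as soon as $\sigma_i(\beta+c\alpha_t)\neq\beta+c\alpha_t$ for every $i\geq 2$ (the fibre of restriction to $F(\beta+c\alpha_t)$ through $\sigma_1$ is then a singleton, so $[L:F(\beta+c\alpha_t)]=1$), and each $i$ excludes at most one value of~$c$. Hence at most $e-1$ bad values. This is precisely the paper's argument, which however packages it for all $t$ generators simultaneously as the nonvanishing of the degree-$(e-1)$ polynomial $\prod_{i=2}^e\sum_j(\alpha_j-\alpha_j^{(i)})X_j$ on $\{0,\ldots,e-1\}^t$, bypassing the induction entirely. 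If you prefer to stay with subfields, your own injectivity argument actually gives the stronger statement $M_{c_1}\cdot M_{c_2}=L$ for distinct bad $c_1,c_2$; passing to the Galois closure, the corresponding subgroups are pairwise disjoint over $\mathrm{Gal}(\tilde L/L)$ and avoid $\mathrm{Gal}(\tilde L/L')$, and comparing cardinalities then yields at most $e-[L:L']\leq e-2$ bad nonzero $c$'s, hence at most $e-1$ in total.
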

\begin{proof}
It is well-known and easily seen (e.g. by induction on $t$) that for a polynomial 
$P(X_1,...,X_t)\in F[X_1,...,X_t]$
not identically zero with total degree $p$ we can 
find integers $m_1,...,m_t$ among $0,...,p$ such that 
$P(m_1,...,m_t)\neq 0$.
Now the case $e=1$ is trivial and so we may assume $e>1$.
Denote the conjugates 
of $\alpha_j$ over $F$ by $\alpha_j^{(i)}$ for 
$1\leq i\leq e$. We consider the polynomial
\begin{alignat}3
\label{konjpoly}
P(X_1,...,X_t)=
\prod_{i=2}^{e}
\left(\sum_{j=1}^{t}(\alpha_j^{(1)}-\alpha_j^{(i)})X_j\right).
\end{alignat}
Since $L=F(\alpha_1,...,\alpha_t)$
none of the factors $\sum_{j=1}^{t}(\alpha_j^{(1)}-\alpha_j^{(i)})X_j$
are zero and so $P$ is not identically zero
and of total degree $e-1$.
Using the observation of the beginning we get
integers $m_1,...,m_t$ with $0\leq m_j<e$ such that
$P(m_1,...,m_t)\neq 0$. But this implies
$\alpha=\sum_{j=1}^{t}m_j\alpha_j$ generates
$L$ over $F$.
\end{proof}

Now let us prove that Theorem \ref{prop3} implies
Corollary \ref{prop2}. We have to show that the error
term in the former is bounded above by the error
term in the latter. If $K=k$ then $\delta=\delta(K/k)=1$,
while $G(K/k)=\{1\}$ and
$\delta_1(K/k)=1$, $\mu_1=-1$. So we are done.
If $K\neq k$ then each $g$ in $G(K/k)$ satisfies
$g\leq \ggg$ and so $\mu_g\geq \mu$. Thus we have to compare
$\delta_g=\delta_g(K/k)$ with $\delta$.
Let $\alpha_1, \alpha_2$ be any 
numbers in $K$ such that $k(\alpha_1,\alpha_2)=K$.
By the previous lemma we deduce that there are rational
integers $0\leq m_1, m_2<e$ such that $\xi=m_1\alpha_1+m_2\alpha_2$
is primitive, so $K=k(\xi)$. Hence $\delta(K/k)\leq H(1,\xi)$.
On the other hand an easy calculation shows 
$H(1,\xi)\leq 2H(1,m_1, m_2)H(1,\alpha_1, \alpha_2)\leq 2eH(1,\alpha_1, \alpha_2)$.
Hence $\delta \leq 2e\delta_g$
for all $g$ in $G(K/k)$. This suffices to deduce
Corollary \ref{prop2} from Theorem \ref{prop3}.\\

\section{Preliminaries on counting}\label{1subsec1}
Recall that for a vector $\vx$ in $\IR^\Da$ we write $|\vx|$ 
for the euclidean length of $\vx$. The closed euclidean ball centered at $\vz$ with radius $r$
will be denoted by $B_{\vz}(r)$.
Let $\Lambda$ be a lattice of rank $\Da$ in $\IR^\Da$ then we define the \it successive minima \rm $\lambda_1(\Lambda),...,\lambda_\Da(\Lambda)$ of $\Lambda$ as the successive minima in the sense of Minkowski with respect
to the unit ball. That is
\begin{alignat*}3
\lambda_i=\inf \{\lambda;\,\lambda B_0(1)\cap \Lambda
\text{ contains $i$ linearly independent vectors}\}.
\end{alignat*}
By definition we have
\begin{alignat}3
0<\lambda_1\leq \lambda_2\leq...\leq \lambda_\Da<\infty.
\end{alignat}
Next we prove a simple lemma which will be used not only in this but also in Section \ref{Estminima}.
\begin{lemma}\label{lemma1.3}
Suppose $V$ is a subspace of $\IR^\Da$ of dimension
$i-1\geq 1$ and contains $i-1$ linearly independent
elements $v_1,...,v_{i-1}$ of $\Lambda$ with $|v_j|=\lambda_{j}$ for $1\leq j\leq i-1$.
Then any $v$ in $\Lambda$ not in $V$ satisfies
\begin{alignat*}3
|v|\geq \lambda_i.
\end{alignat*}
\end{lemma}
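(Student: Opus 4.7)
The plan is to argue by contradiction directly from the definition of the successive minima. Suppose, for contradiction, that there exists $v\in \Lambda\setminus V$ with $|v|<\lambda_i$. Since $V$ has dimension $i-1$ and already contains the $i-1$ linearly independent vectors $v_1,\dots,v_{i-1}$, these vectors form a basis of $V$. Because $v\notin V$, the $i$ vectors $v_1,\dots,v_{i-1},v$ are linearly independent in $\IR^\Da$.

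Next, I would set $\lambda=\max\{\lambda_{i-1},|v|\}$. By our assumption $|v|<\lambda_i$, together with $\lambda_{i-1}\leq \lambda_i$ (always true for successive minima), we get $\lambda<\lambda_i$ unless $\lambda_{i-1}=\lambda_i$, in which case using $|v|<\lambda_i=\lambda_{i-1}$ would contradict the minimality of $\lambda_{i-1}$ directly; either way one gets a strict inequality $\lambda<\lambda_i$ (or an immediate contradiction). By construction, $|v_j|=\lambda_j\leq \lambda_{i-1}\leq \lambda$ for $j\leq i-1$ and $|v|\leq \lambda$, so the set $\lambda B_0(1)\cap\Lambda$ contains the $i$ linearly independent lattice points $v_1,\dots,v_{i-1},v$. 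This contradicts the definition of $\lambda_i$ as the infimum of all $\lambda$ for which $\lambda B_0(1)\cap\Lambda$ contains $i$ linearly independent vectors.

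The argument is almost entirely formal; the only mild subtlety is handling the case $\lambda_{i-1}=\lambda_i$, and this is resolved by the observation that strict inequality in $|v|<\lambda_i$ forces $|v|<\lambda_{i-1}$ in that case, which would itself already contradict the minimality definition at step $i-1$ (since we would then have $i-1$ linearly independent vectors of length strictly less than $\lambda_{i-1}$, taking $v_1,\dots,v_{i-2},v$ — at least when $i\geq 2$, which is given by $i-1\geq 1$). So there is no genuine obstacle and the lemma follows in a few lines.
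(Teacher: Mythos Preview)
Your approach is the same as the paper's --- both start from the linear independence of $v_1,\dots,v_{i-1},v$ and then have to deal with the possibility that several successive minima coincide --- but your handling of the case $\lambda_{i-1}=\lambda_i$ has a gap. You claim that $v_1,\dots,v_{i-2},v$ give $i-1$ linearly independent lattice vectors of length \emph{strictly} less than $\lambda_{i-1}$; however $|v_{i-2}|=\lambda_{i-2}$, and nothing prevents $\lambda_{i-2}=\lambda_{i-1}$ as well, in which case $|v_{i-2}|=\lambda_{i-1}$ and no contradiction with the definition of $\lambda_{i-1}$ arises. The same obstruction recurs if you try to push the argument down one more step.

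The paper's proof resolves this in one move: let $p\in\{1,\dots,i\}$ be the \emph{smallest} index with $\lambda_p=\lambda_i$. If $p=1$ then $|v|\geq\lambda_1=\lambda_i$ is immediate from the definition of $\lambda_1$. If $p>1$ then $v_1,\dots,v_{p-1},v$ are $p$ linearly independent lattice vectors with $|v_j|=\lambda_j\leq\lambda_{p-1}<\lambda_p$ for $j<p$ and (under your contradiction hypothesis) $|v|<\lambda_i=\lambda_p$; now \emph{all} $p$ of them have length strictly below $\lambda_p$, contradicting the definition of $\lambda_p$. Replacing your step-down-by-one with this choice of $p$ repairs the argument.
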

\begin{proof}
Suppose $v$ is in $\Lambda$ but not in $V$.
Then $v_1,...,v_{i-1},v$ are linearly independent.
Hence one of these vectors has length at least $\lambda_i$.
If $\lambda_{i-1}<\lambda_i$ the claim follows at once
since $|v_1|\leq ...\leq|v_{i-1}|=\lambda_{i-1}$.
Now let $p$ in $\{1,...,i\}$ be minimal with
$\lambda_p=\lambda_i$. If $p=1$ then the result is
clear from the definition of $\lambda_1$. If $p>1$
then $v_1,...,v_{p-1},v$ are linearly independent
and again we conclude one of these vectors has length at least $\lambda_p=\lambda_i$. But $v_1,...,v_{p-1}$ have 
length at most $\lambda_{p-1}<\lambda_i$, so
$|v|\geq \lambda_i$ as claimed.
\end{proof}
\begin{lemma}\label{minpowlatt}
Suppose $\Da=d(n+1)$ and $\Lambda=\Lambda_0^{n+1}$ for a
lattice $\Lambda_0$ of rank $d$ in $\IR^d$. Then the
successive minima of $\Lambda$ are given by
\begin{alignat*}3
\lambda_1(\Lambda_0),...,
\lambda_1(\Lambda_0),
\lambda_2(\Lambda_0),...,
\lambda_2(\Lambda_0),...,
\lambda_d(\Lambda_0),...,
\lambda_d(\Lambda_0)
\end{alignat*}
where each minimum is repeated $n+1$ times.
\end{lemma}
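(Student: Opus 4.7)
The plan is to bound the $k$-th successive minimum $\lambda_k(\Lambda)$ both above and below by $\mu_j$, where $\mu_1\leq\cdots\leq\mu_d$ denote the successive minima of $\Lambda_0$ and $j=\lceil k/(n+1)\rceil$. Fix linearly independent vectors $w_1,\ldots,w_d\in\Lambda_0$ with $|w_l|=\mu_l$.

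For the upper bound, I would take the $D=d(n+1)$ vectors $e_i(w_l)\in\Lambda$ (for $0\leq i\leq n$, $1\leq l\leq d$) having $w_l$ in the $i$-th block of coordinates and $\mathbf{0}$ elsewhere. These are linearly independent in $\IR^D$ and satisfy $|e_i(w_l)|=\mu_l$. Sorting them by length produces exactly the multiset in the statement, each $\mu_l$ repeated $n+1$ times, which yields $\lambda_k(\Lambda)\leq\mu_j$.

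For the lower bound, I would argue by contradiction. Suppose there exist $k$ linearly independent vectors $v_1,\ldots,v_k\in\Lambda$ all of length strictly less than $\mu_j$. Let $V\subset\IR^d$ be the $(j-1)$-dimensional subspace spanned by $w_1,\ldots,w_{j-1}$. Then $V^{n+1}\subset\IR^D$ has dimension $(j-1)(n+1)<k$, so at least one $v_s=(v_s^{(0)},\ldots,v_s^{(n)})$ fails to lie in $V^{n+1}$, meaning some component $v_s^{(i)}\in\Lambda_0$ is not in $V$. Since $V$ contains $w_1,\ldots,w_{j-1}$ with $|w_l|=\mu_l$, Lemma \ref{lemma1.3} applied to the rank-$d$ lattice $\Lambda_0\subset\IR^d$ forces $|v_s^{(i)}|\geq\mu_j$, and consequently $|v_s|\geq|v_s^{(i)}|\geq\mu_j$, contradicting our assumption.

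There is no real obstacle: the only subtle point is ensuring that the subspace $V$ used in Lemma \ref{lemma1.3} genuinely contains $j-1$ linearly independent lattice vectors achieving the first $j-1$ minima of $\Lambda_0$, which is automatic by construction. The case $j=1$ (where $V=\{0\}$) is handled directly by the definition of $\lambda_1(\Lambda_0)$, exactly as in the proof of Lemma \ref{lemma1.3} itself.
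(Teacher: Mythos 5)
Your proof is correct and takes essentially the same route as the paper: the upper bound $\lambda_k(\Lambda)\leq\mu_j$ comes from the vectors with one block equal to $w_l$ and zeros elsewhere, and the lower bound $\lambda_k(\Lambda)\geq\mu_j$ uses the subspace $V^{n+1}$ of dimension $(j-1)(n+1)<k$ together with Lemma \ref{lemma1.3} applied to a component in $\Lambda_0$. The paper phrases this as a pair of inequalities $\lambda_{i(n+1)}(\Lambda_0^{n+1})\leq\lambda_i(\Lambda_0)\leq\lambda_{(i-1)(n+1)+1}(\Lambda_0^{n+1})$ for each $i$, which by monotonicity of the successive minima suffices, while you index by $k$ directly via $j=\lceil k/(n+1)\rceil$; this is a cosmetic difference only.
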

\begin{proof}
A typical minimum $\lambda_i(\Lambda_0)$ occurs
above in the positions $(i-1)(n+1)+1,...,i(n+1)$. Thus
it suffices to verify
\begin{alignat}3
\label{sucminineq1}
\lambda_{i(n+1)}({\Lambda_0}^{n+1})\leq 
\lambda_i(\Lambda_0)\leq  
\lambda_{(i-1)(n+1)+1}({\Lambda_0}^{n+1})
\end{alignat}
for $1\leq i\leq d$. For the first inequality
we note that there is a subspace $V_i$ in
$\IR^d$ of dimension $i$ containing $i$
linearly independent elements $v_1,...,v_i$ of $\Lambda_0$
with length $\lambda_1(\Lambda_0),...,\lambda_i(\Lambda_0)$.
Now $V_i^{n+1}$ in $\IR^{d(n+1)}$ of dimension
$i(n+1)$ contains $i(n+1)$ linearly independent elements
of $\Lambda_0^{n+1}$ like $(v_1,0,...,0)$ also with
length at most $\lambda_i(\Lambda_0)$. The first
inequality in (\ref{sucminineq1}) follows at once.\\
For the second inequality note that any $(i-1)(n+1)+1$ independent
points $w$ of $\Lambda_0^{n+1}$ cannot all lie in $V_{i-1}^{n+1}$.
So some $w$ has the form $w=(w_1,...,w_{n+1})$ with some $w_j$
not in $V_{i-1}$. By the previous lemma we see that
$|w|\geq |w_j|\geq \lambda_i(\Lambda_0)$ and the second inequality
is proved.
\end{proof}
To quantify the deficiency from
being orthogonal one defines the \it orthogonality
defect $\OD$ of a set of linearly independent
vectors \rm $v_1,...,v_\Da$ in $\IR^\Da$ as
\begin{alignat*}3
\OD(v_1,...,v_\Da)=\frac{|v_1|...|v_\Da|}{\det\Lambda}
\end{alignat*}
where $\Lambda$ is the lattice generated
by $v_1,...,v_\Da$.
By Hadamard's inequality $\OD(v_1,...,v_\Da)\geq 1$ with equality
if and only if the system of vectors is orthogonal.
When working with a lattice it is often convenient 
to have a basis $v_1,...,v_\Da$ of small orthogonality defect.
We define the \it orthogonality 
defect of the lattice \rm $\Lambda$ as
\begin{alignat*}3
\OD(\Lambda)=
\inf_{(v_1,...,v_\Da)}\frac{|v_1|...|v_\Da|}{\det\Lambda}
\end{alignat*}
where the infimum runs over all bases $(v_1,...,v_\Da)$
of $\Lambda$. Since $\Lambda$ is discrete the infimum
will be attained. 
Due to its importance it is worth to state Minkowski's Theorem
explicitly. Since we need only a special case
we do not give the full theorem (see \cite{18} p.218 Theorem V).
\begin{theorem}[(Minkowski's Second Theorem for balls)]\label{Mink2}
Let $\Lambda$ be a lattice in $\IR^\Da$ with successive
minima $\lambda_1,...,\lambda_\Da$. Then 
\begin{alignat*}3
\frac{2^\Da}{\Da!}\det \Lambda \leq \lambda_1...\lambda_\Da \Vol B_0(1)
\leq  2^\Da\det \Lambda
\end{alignat*}
where $\Vol B_0(1)=\frac{\pi^{\Da/2}}{\Gamma(\Da/2+1)}$.
\end{theorem}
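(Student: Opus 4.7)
The result is the specialization of Minkowski's Second Theorem to the symmetric convex body $K=B_0(1)$, so I would split the plan into the two inequalities, which have very different character and difficulty.

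For the left-hand (lower) inequality, an elementary argument suffices. By the definition of the successive minima, choose linearly independent $v_1,\dots,v_\Da\in\Lambda$ with $|v_i|=\lambda_i$. Let $\Lambda'=\IZ v_1+\cdots+\IZ v_\Da\subseteq\Lambda$; its covolume satisfies $|\det(v_1,\dots,v_\Da)|=[\Lambda:\Lambda']\det\Lambda\geq\det\Lambda$, and Hadamard's inequality gives $|\det(v_1,\dots,v_\Da)|\leq|v_1|\cdots|v_\Da|=\lambda_1\cdots\lambda_\Da$. Combining yields $\lambda_1\cdots\lambda_\Da\geq\det\Lambda$. Separately, the cross-polytope $\{\vx\in\IR^\Da:\sum|x_i|\leq 1\}$ lies inside $B_0(1)$ (because $\sum x_i^2\leq(\sum|x_i|)^2$) and has volume $2^\Da/\Da!$, so $\Vol B_0(1)\geq 2^\Da/\Da!$. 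Multiplying the two estimates gives the lower bound.

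The right-hand (upper) inequality is the deep direction of Minkowski's Second Theorem and is the genuine obstacle. A naive attempt via Minkowski's First Theorem applied to the body $\tfrac{1}{2}\lambda_1 B_0(1)$, which contains no nonzero lattice point in its interior, only produces $\lambda_1^\Da \Vol B_0(1)\leq 2^{2\Da}\det\Lambda$, which is strictly weaker than required. Obtaining the sharp factor $2^\Da$ in place of $2^{2\Da}$ requires exploiting all $\Da$ successive minima simultaneously. The classical proof (due to Minkowski, with simplifications by Mahler and Davenport) first constructs a basis $w_1,\dots,w_\Da$ of $\Lambda$ adapted to the successive minima, with $|w_i|\ll\lambda_i$, and then runs a careful volume-packing argument comparing disjoint lattice translates of scaled copies of $B_0(1)$ against the parallelepiped spanned by the $w_i$. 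Since the paper explicitly signals that this full theorem will be quoted rather than proved, I would follow the authors and simply invoke the statement from Cassels's \emph{Geometry of Numbers} \cite{18}, Theorem V, p.~218, specialized to $K=B_0(1)$.
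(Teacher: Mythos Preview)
Your proposal is correct and in fact goes beyond what the paper does: the paper's entire proof is the single sentence ``For a proof we refer to \cite{18} p.205,'' i.e.\ it simply cites Cassels. Your elementary argument for the lower bound (Hadamard plus the cross-polytope inclusion) is a genuine addition and is sound; for the upper bound you end up doing exactly what the paper does, namely invoking Cassels.
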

\begin{proof}
For a proof we refer to \cite{18} p.205.
\end{proof}
By Minkowski's Second Theorem 
we obtain $n$ linearly independent vectors $u_1,...,u_\Da$
in $\Lambda$,
such that $|u_1|...|u_\Da|/\det\Lambda=\lambda_1...\lambda_\Da/\det\Lambda$ is bounded below and above in terms of $\Da$ only. Unfortunately these vectors usually fail to build a basis of the lattice but they can be used to construct a reduced basis.
We use the Mahler-Weyl basis reduction to prove the 
following bound:
\begin{lemma}\label{bouOD1}
Let $\Lambda$ be a lattice of rank $\Da>1$. Then
\begin{alignat*}3
\OD(\Lambda)\leq \frac{\Da^{\frac{3}{2}\Da}}{(2\pi)^{\frac{\Da}{2}}}.
\end{alignat*}
\end{lemma}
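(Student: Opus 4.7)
The plan is to combine the Minkowski Second Theorem for balls (Theorem~\ref{Mink2}, just proved) with the classical Mahler--Weyl basis reduction. Since $\Vol B_0(1) = \pi^{\Delta/2}/\Gamma(\Delta/2+1)$, the upper half of Minkowski's theorem can be rewritten as
\begin{equation*}
\lambda_1\cdots\lambda_\Delta \;\leq\; \frac{2^\Delta\,\Gamma(\Delta/2+1)}{\pi^{\Delta/2}}\,\det\Lambda.
\end{equation*}
Separately, starting from linearly independent vectors $u_1,\ldots,u_\Delta\in\Lambda$ with $|u_i|=\lambda_i$ (whose existence is guaranteed by the definition of the successive minima), I would invoke the Mahler--Weyl reduction to produce a basis $v_1,\ldots,v_\Delta$ of $\Lambda$ satisfying $|v_i|\leq\max(1,i/2)\,\lambda_i$ for each $i$. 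The construction is inductive: at step $i$ one takes $v_i$ to be $u_i$ corrected by a $\IZ$-linear combination of $v_1,\ldots,v_{i-1}$ whose coefficients may be normalised to lie in $[-1/2,1/2]$, exploiting the fact that $\{v_1,\ldots,v_{i-1}\}$ spans a primitive rank-$(i-1)$ sublattice of $\Lambda$. A direct length estimate on the corrected vector yields the claimed bound.

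Multiplying these two estimates and using $\prod_{i=1}^\Delta \max(1,i/2)=\Delta!/2^{\Delta-1}$ gives
\begin{equation*}
\OD(\Lambda) \;\leq\; \frac{|v_1|\cdots|v_\Delta|}{\det\Lambda} \;\leq\; \frac{\Delta!}{2^{\Delta-1}}\cdot\frac{2^\Delta\,\Gamma(\Delta/2+1)}{\pi^{\Delta/2}} \;=\; \frac{2\,\Delta!\,\Gamma(\Delta/2+1)}{\pi^{\Delta/2}}.
\end{equation*}
It then remains to show that this quantity is at most $\Delta^{3\Delta/2}/(2\pi)^{\Delta/2}$, which is equivalent to the elementary inequality
\begin{equation*}
2^{\Delta/2+1}\,\Delta!\,\Gamma(\Delta/2+1) \;\leq\; \Delta^{3\Delta/2} \qquad (\Delta\geq 2).
\end{equation*}
Stirling's formula applied to $\Delta!$ and to $\Gamma(\Delta/2+1)=(\Delta/2)!$ shows that the left-hand side is of order $\Delta^{3\Delta/2}\,e^{-3\Delta/2}$ up to polynomial factors, so the auxiliary factor $2^{\Delta/2+1}$ is easily absorbed by $e^{3\Delta/2}$ for all $\Delta$ large enough; the small cases $\Delta=2,3$ are verified directly (with equality at $\Delta=2$).

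The main obstacle is exactly the last step: the desired bound is sharp at $\Delta=2$, so no constant factor can be lost in either Minkowski's inequality or the Mahler--Weyl reduction. In particular one must use the sharp form $|v_i|\leq\max(1,i/2)\lambda_i$ rather than any cruder version such as $|v_i|\leq i\lambda_i$, otherwise the combined bound fails at $\Delta=2$. The rest of the argument is then essentially a bookkeeping exercise with Stirling's approximation.
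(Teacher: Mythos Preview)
Your proposal is correct and follows essentially the same route as the paper: Minkowski's Second Theorem combined with the Mahler--Weyl basis bound $|v_i|\le\max\{1,i/2\}\lambda_i$, leading to $\OD(\Lambda)\le 2\,\Da!\,\Gamma(\Da/2+1)/\pi^{\Da/2}$. The only difference is in the final elementary step: rather than Stirling plus small-case checks, the paper uses the clean bounds $2\le\Da$, $\Da!\le\Da^{\Da-1}$, and $\Gamma(\Da/2+1)\le(\Da/2)^{\Da/2}$ (the last verified separately for integer and half-integer arguments), which multiply directly to $\Da^{3\Da/2}/(2\pi)^{\Da/2}$ without any asymptotic analysis.
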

\begin{proof}
By Theorem \ref{Mink2}
\begin{alignat*}3
\lambda_1...\lambda_\Da \Vol B_0(1)\leq 2^\Da\det\Lambda.
\end{alignat*}
It is known from the definition of the $\lambda_i$ that 
there are linearly 
independent vectors $u_1,...,u_\Da$, such that $|u_i|=\lambda_i$
for $1\leq i \leq \Da$. Using a lemma of Mahler and Weyl
(\cite{18} Lemma 8 p.135) we obtain a basis $v_1,...,v_\Da$
of $\Lambda$ satisfying 
\begin{alignat*}3
|v_i|\leq \max\{|u_i|,\frac{1}{2}(|u_1|+...+|u_i|)\}
\leq \max\{1,\frac{i}{2}\}\lambda_i 
\end{alignat*}
for $1\leq i\leq \Da$.
Since $\Gamma(m+1)=m!$ and 
$\Gamma(m+1/2)=(m-1/2)(m-3/2)(m-5/2)...(1/2)\sqrt{\pi}$ for positive integers $m$,
we see that
$\Gamma(\frac{\Da}{2}+1)\leq (\frac{\Da}{2})^{\frac{\Da}{2}}$
provided $\Da\geq 2$. 
Using also $\Da!\leq \Da^{\Da-1}$ this yields
\begin{alignat*}3
\OD(\Lambda)\leq \frac{|v_1|...|v_\Da|}{\det\Lambda}
\leq \frac{\Da \Da!\Gamma(\frac{\Da}{2}+1)}{\pi^{\frac{\Da}{2}}}
\leq \frac{\Da^{\frac{3}{2}\Da}}{(2\pi)^{\frac{\Da}{2}}}
\end{alignat*}
and proves the statement.
\end{proof}

\section{The basic counting technique}\label{1subsec3}
Let $\Lambda$ be a lattice in $\IR^\Da$ of rank $\Da$.
A set $F$ is called a \it fundamental domain \rm of $\Lambda$
if there is a basis $v_1,...,v_\Da$ of $\Lambda$
such that
\begin{alignat*}3
F=[0,1)v_1+...+[0,1)v_\Da.
\end{alignat*}
Let $v_1,...,v_\Da$ be 
a basis of $\Lambda$ with corresponding fundamental domain
$F$. For a set $S$ in $\IR^\Da$ 
write $\T=\T_S(F)$ for the number of translates 
$F_v=F+v$ $(v \in \Lambda)$ by lattice points
having non-empty intersection with the boundary $\partial S$.
The following inequality is well-known  
but crucial. Therefore we state it as a lemma.
\begin{lemma}\label{latticeerrorboundary}
Suppose $S$ is measurable and bounded. Then
\begin{alignat}3
\label{lipapp}
||\Lambda \cap S|-\frac{\Vol S}{\det\Lambda}| \leq \T.
\end{alignat}
\end{lemma}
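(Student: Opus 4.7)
The plan is to use the tiling of $\IR^\Da$ by the translates $F_v = F + v$ for $v \in \Lambda$, and to compare two different ways of using these translates: one counts the lattice points inside $S$, the other measures volume. Since the fundamental domain $F$ contains exactly one lattice point (namely $0$, as $F = [0,1)v_1+\dots+[0,1)v_\Da$), each translate $F_v$ contains exactly one lattice point, namely $v$ itself. Partition the set of $v \in \Lambda$ with $F_v \cap S \neq \emptyset$ into two classes: those $v$ with $F_v \subseteq S$ (call this count $N_{\mathrm{in}}$), and those with $F_v$ intersecting both $S$ and its complement. A translate of the latter type must meet $\partial S$, so there are at most $\T$ of them.

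Next I would derive the two volume inequalities. Since the translates $F_v$ with $F_v \subseteq S$ are pairwise disjoint and each has volume $\det \Lambda$, we get
\begin{equation*}
N_{\mathrm{in}} \det \Lambda \leq \Vol S.
\end{equation*}
On the other hand $S$ is contained in the union of all translates $F_v$ with $F_v \cap S \neq \emptyset$, so
\begin{equation*}
\Vol S \leq (N_{\mathrm{in}} + \T)\det \Lambda.
\end{equation*}

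Finally, I would translate these into the lattice-point counts. Each $v$ with $F_v \subseteq S$ satisfies $v \in S$ (since $v \in F_v$), so $N_{\mathrm{in}} \leq |\Lambda \cap S|$. Conversely, if $v \in \Lambda \cap S$ then $F_v \cap S \neq \emptyset$, so either $F_v \subseteq S$ or $F_v$ contributes to $\T$; hence $|\Lambda \cap S| \leq N_{\mathrm{in}} + \T$. Combining these with the volume bounds gives
\begin{equation*}
\frac{\Vol S}{\det \Lambda} - \T \leq N_{\mathrm{in}} \leq |\Lambda \cap S| \leq N_{\mathrm{in}} + \T \leq \frac{\Vol S}{\det \Lambda} + \T,
\end{equation*}
which is exactly (\ref{lipapp}). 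There is no real obstacle; the only point requiring minor care is that if $F_v \cap S$ is non-empty and $F_v$ is not contained in $S$, then $F_v$ really does meet $\partial S$ — this uses that $F_v$ (or equivalently its closure) is connected and that any subset meeting both $S$ and $\IR^\Da \setminus S$ through a connected piece must contain a boundary point, which is immediate from the topological definition of boundary.
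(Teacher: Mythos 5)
Your proof is correct and follows essentially the same approach as the paper: partition the translates $F_v$ meeting $S$ into those contained in $S$ (the paper's $\mathfrak{m}$, your $N_{\mathrm{in}}$) and those meeting $\partial S$, derive the same pair of volume and counting inequalities, and combine. The one point the paper makes slightly more briefly — that $F_v$ is convex, hence connected, hence must meet $\partial S$ if it meets both $S$ and its complement — you spell out in the same way, so there is no substantive difference.
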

\begin{proof}
Clearly the translates $F_v=F+v$ $(v\in \Lambda)$
define a partition of $\IR^\Da$.
Moreover every $F_v$ contains exactly one lattice point
- namely $v$.
Denote by 
$\mathfrak{m}=\mathfrak{m}_S(F)$ the number of translates 
of $F$ by lattice points, which 
have empty intersection with the complement of $S$. 
In particular we have $\mathfrak{m} \leq |\Lambda \cap S|$.
Now suppose $v$ lies in $S$. So either $F_v$
lies in $S$ or $F_v$ contains a point of $S$ 
and a point of its complement.
But $F_v$ is convex and therefore
connected.
So if $F_v$ contains
a point of $S$ and a point of its complement then
it contains a point of the boundary $\partial S$.
Hence $|\Lambda \cap S|\leq \mathfrak{m}+\T$.
Now $\det\Lambda$ is the volume of $F_v$.
So the union of all translates $F_v$ lying in
$S$ has volume $\mathfrak{m} \det \Lambda$. And the union of all 
translates having non-empty intersection with $S$
has volume at most $(\mathfrak{m}+\T)\det \Lambda$.  
Thus we have proven the following inequalities:
\begin{alignat*}3
\mathfrak{m} &\leq |\Lambda \cap S| &\leq &\mathfrak{m}+\T,\\
\mathfrak{m} \det \Lambda &\leq \Vol S &\leq &(\mathfrak{m}+\T) \det \Lambda.
\end{alignat*}
Hence 
\begin{alignat*}3
||\Lambda \cap S|-\frac{\Vol S}{\det\Lambda}| \leq \T.
\end{alignat*}
\end{proof}
The inequality above explains why
the following proposition is crucial for the subsequent counting results of this section. 
\begin{proposition}[Masser]\label{PM1}
Assume $\Da>1$, let $\Lambda \subseteq \IR^\Da$ be a lattice and
let $\lambda_1,...,\lambda_\Da$ be the successive minima
of $\Lambda$ with respect to the unit ball.
Assume $S$ is a bounded subset of $\IR^\Da$ with 
boundary $\partial S$ in Lip$(\Da,1,\M,L)$. 
Let $v_1,...,v_\Da$ be a basis of $\Lambda$ with fundamental
domain $F$ and $\T_S(F)$ the number of translates
$F_v=F+v$ $(v \in \Lambda)$,
which have non-empty intersection with $\partial S$. 
Then for any natural number $Q$ we have
\begin{alignat*}3
\T_S(F) \leq \M   Q^{\Da-1} \prod_{i=1}^{\Da}{\left(\frac{\sqrt{\Da-1}\OD(v_1,...,v_\Da)L}{\lambda_iQ}+2\right)}.
\end{alignat*}
\end{proposition}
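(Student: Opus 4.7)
The plan is to cover $\partial S$ by many small pieces, each of bounded diameter, and then estimate how many lattice translates of $F$ can meet each such piece.

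First I would use the hypothesis $\partial S\in\text{Lip}(\Da,1,\M,L)$ to write $\partial S$ as the union of $\M$ images $\phi_j([0,1]^{\Da-1})$. Partitioning each cube $[0,1]^{\Da-1}$ into $Q^{\Da-1}$ sub-cubes of side $1/Q$, every sub-cube has diameter $\sqrt{\Da-1}/Q$ and its center $c$ is within distance $\sqrt{\Da-1}/(2Q)$ of every point of the sub-cube. By the Lipschitz condition (\ref{lipcond1}) the $\phi_j$-image of the sub-cube is then contained in the closed ball $B_{\phi_j(c)}(r)$ with $r=L\sqrt{\Da-1}/(2Q)$. Thus $\partial S$ is covered by $\M Q^{\Da-1}$ balls of radius $r$, and it suffices to bound the number of translates $F_v=F+v$ ($v\in\Lambda$) meeting a single ball $B_z(r)$ by $\prod_{i=1}^{\Da}\bigl(2r\,\OD(v_1,\ldots,v_\Da)/\lambda_i+2\bigr)$.

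To bound this count I would fix $z\in\IR^\Da$ and express everything in the basis $v_1,\ldots,v_\Da$: write $v=\sum c_iv_i$ with $c_i\in\IZ$, a general point of $F$ as $\sum t_iv_i$ with $t_i\in[0,1)$, and $z=\sum z_iv_i$ with $z_i\in\IR$. If $F_v\cap B_z(r)\neq\emptyset$ then some choice of $t_i$ yields
\[
\Bigl|\sum_{i=1}^{\Da}(c_i+t_i-z_i)v_i\Bigr|\leq r.
\]
Setting $s_i=c_i+t_i-z_i$, Cramer's rule gives $s_i=\det A_i/\det A$, where $A$ has columns $v_1,\ldots,v_\Da$ and $A_i$ is obtained by replacing the $i$-th column by $w:=\sum s_iv_i$. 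Hadamard's inequality then yields
\[
|s_i|\;\leq\;\frac{|w|\prod_{j\neq i}|v_j|}{\det\Lambda}\;=\;\frac{|w|\,\OD(v_1,\ldots,v_\Da)}{|v_i|}\;\leq\;\frac{r\,\OD(v_1,\ldots,v_\Da)}{|v_i|}.
\]
Since $c_i=s_i+z_i-t_i$ with $t_i\in[0,1)$ and $z_i$ fixed, $c_i$ ranges in an interval of length at most $2r\,\OD/|v_i|+1$, giving at most $2r\,\OD/|v_i|+2$ integer values. Multiplying over $i$ produces the desired bound for a single ball, with $|v_i|$ in the denominator.

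Finally I would upgrade $|v_i|$ to $\lambda_i$: after a permutation of the basis we may assume $|v_1|\leq\cdots\leq|v_\Da|$, and then $v_1,\ldots,v_i$ are $i$ linearly independent lattice vectors of length at most $|v_i|$, forcing $|v_i|\geq\lambda_i$. Since the product over $i$ is symmetric in the basis labels, replacing $|v_i|$ by $\lambda_i$ only enlarges the bound. Plugging $r=L\sqrt{\Da-1}/(2Q)$ and multiplying by the $\M Q^{\Da-1}$ covering pieces yields the proposition. The main technical step is the Cramer/Hadamard estimate on $|s_i|$; everything else is bookkeeping, although the permutation argument used to pass from $|v_i|$ to $\lambda_i$ is a small point worth stating carefully.
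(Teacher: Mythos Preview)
Your proof is correct and follows essentially the same approach as the paper's own proof: cover $\partial S$ by $\M Q^{\Da-1}$ pieces of diameter at most $\sqrt{\Da-1}\,L/Q$, then use Cramer's rule with Hadamard's inequality to bound the coordinates of any lattice point whose fundamental domain meets a given piece, and finally replace $|v_i|$ by $\lambda_i$ via the permutation argument. The only cosmetic difference is that you enclose each piece in a ball centered at $\phi_j(c)$ and compare against this fixed center, whereas the paper compares two points of the same piece $\phi(C)$ directly; the resulting numerics are identical.
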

\begin{proof}
We certainly may assume that $S$ is not empty and therefore that $\partial S$ is not empty. 
Choose one of the parameterizing maps $\phi$ and split $I=[0,1]$ in $Q$ intervals of length $1/Q$.
Then $\phi(I^{\Da-1})$ splits in $Q^{\Da-1}$
subsets $\phi(C)$ where $C$ is a hypercube in $\IR^{\Da-1}$
of side $1/Q$. Due to the Lipschitz condition the distance between any two points in $\phi(C)$ does not exceed
$\frac{\sqrt{\Da-1}L}{Q}$.
Now $F$ is the fundamental domain corresponding to the given basis so $F=[0,1)v_1+...+[0,1)v_\Da$. We have to count the $v$ in $\Lambda$
such that $F_v$ meets $\partial S$. Thus $F_v$ meets one of
the $\phi(C)$ say in a point $\vx$. Writing $v=r_1v_1+...+r_\Da v_\Da$ for $r_1,...,r_\Da$ in $\IZ$, we see that there are 
$\vartheta_1,...,\vartheta_\Da$ in $[0,1)$ such that
\begin{alignat*}3
\vx=(r_1+ \vartheta_1)v_1+...+(r_\Da+\vartheta_\Da)v_\Da.
\end{alignat*}
We now show that there are not too many other $v'$ in $\Lambda$
such that $F_{v'}$ meets this same $\phi(C)$.
Let $\vx'$ be in $\phi(C)\cap F_{v'}$ then we 
get corresponding
$r_i', \vartheta_i'$.
To estimate the length of $\vx-\vx'$
write $\rho_i=r_i+\vartheta_i-(r_i'+\vartheta_i')$ for the
coefficient of the basis element $v_i$. 
Hence
\begin{alignat}3
\label{difflenght}
|\rho_1v_1+...+\rho_\Da v_\Da|=|\vx-\vx'|\leq \frac{\sqrt{\Da-1}L}{Q}.
\end{alignat}
After permuting the indices we may assume that $|v_i|\leq |v_{i+1}|$ and therefore $|v_i|\geq \lambda_i$. Now by Cramer's rule and the definition of $\OD(v_1,...,v_\Da)=\OD$ we get
\begin{alignat*}3
|\rho_i|=&|\frac{\det[v_1...\vx-\vx'...v_\Da]}
{\det[v_1...v_i...v_\Da]}|=\frac{|\det[v_1...\vx-\vx'...v_\Da]|}
{|v_1|...|v_i|...|v_\Da|}\OD.
\end{alignat*}
Now we apply Hadamard's inequality to obtain the upper bound 
\begin{alignat*}3
\frac{|v_1|...|\vx-\vx'|...|v_\Da|}{|v_1|...|v_i|...|v_\Da|}\OD
=\frac{|\vx-\vx'|}{|v_i|}\OD \leq \frac{|\vx-\vx'|}{\lambda_i}\OD.
\end{alignat*}
Due to (\ref{difflenght}) the latter is 
\begin{alignat*}3
\leq &\frac{\sqrt{\Da-1}\OD L}{\lambda_i Q}.
\end{alignat*}
Notice that $|\vartheta_i-\vartheta_i'| < 1$ therefore all
the $r_i$ lie in an interval of length
\begin{alignat*}3
\frac{\sqrt{\Da-1}\OD L}{\lambda_i Q}+1. 
\end{alignat*}
So the number of $(r_1,...,r_\Da)$ is at most
\begin{alignat*}3
\prod_{i=1}^{\Da}
\left([\frac{\sqrt{\Da-1}\OD L}{\lambda_iQ}]+2\right),
\end{alignat*}
provided there are at least two of them. However, it is trivially
true if there is just one of them.
On recalling that we have $\M$ parameterizing maps and
$Q^{\Da-1}$ subsets $\phi(C)$ for each map 
we get the desired
upper bound for the number of translates having
non-empty intersection with the boundary of $S$. 
\end{proof}

The Proposition \ref{PM1} leads to an explicit version
of Lemma 2 \cite{1}. 
\begin{corollary}\label{TMV1}
Let $S$ be a bounded set in $\IR^\Da$ such that
the boundary $\partial S$ of $S$ is in Lip$(\Da,1,\M,L)$.
Let $\Lambda$ be a lattice in $\IR^\Da$.
Then $S$ is measurable and moreover 
\begin{alignat}3
\label{ineqcor1}
||S\cap\Lambda|-\frac{\Vol S}{\det\Lambda}|
\leq 3^\Da \M \left(\frac{\sqrt{\Da}\OD(\Lambda)L}{\lambda_1}+1\right)^{\Da-1}.
\end{alignat}
\end{corollary}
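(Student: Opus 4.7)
The plan is to establish measurability first, then combine Lemma \ref{latticeerrorboundary} with Proposition \ref{PM1} after a careful choice of the parameter $Q$. For the measurability of $S$, I would argue that the boundary $\partial S$ is covered by $\M$ Lipschitz images of $[0,1]^{\Da-1}$; since each such image has finite $(\Da-1)$-dimensional Hausdorff measure, it has zero $\Da$-dimensional Lebesgue measure. Thus $\partial S$ is Lebesgue-null, and since $S$ is bounded it is Lebesgue measurable so that $\Vol S$ is well defined.

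The case $\Da=1$ falls outside the scope of Proposition \ref{PM1} but is easy: here $\partial S$ consists of at most $\M$ points, so $S$ is a union of at most $\M$ intervals; on each of these, the number of points of $\Lambda=c\IZ$ differs from length$/c$ by at most $1$, summing to the bound $\M$, which is dominated by the right-hand side $3\M$ (the bracket is raised to the power zero).

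So assume $\Da\ge 2$. Choose a basis $v_1,\ldots,v_\Da$ of $\Lambda$ that attains the infimum defining $\OD(\Lambda)$, and let $F$ be the associated fundamental domain. Lemma \ref{latticeerrorboundary} reduces the claim to a bound on $\T_S(F)$. I would apply Proposition \ref{PM1} with the following choice: set $A=\sqrt{\Da-1}\,\OD(\Lambda)L/\lambda_1$ and take $Q$ to be the least positive integer with $Q\ge A$, so that $Q\le A+1$.

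Because $\lambda_i\ge\lambda_1$ and $Q\ge A$, each of the $\Da$ factors in the product of Proposition \ref{PM1} is at most $A/Q+2\le 3$, so the product is at most $3^\Da$. At the same time $Q^{\Da-1}\le(A+1)^{\Da-1}$, giving $\T_S(F)\le 3^\Da\M(A+1)^{\Da-1}$. Replacing $\sqrt{\Da-1}$ by $\sqrt{\Da}$ inside $A$ then yields the stated estimate (\ref{ineqcor1}). The main tactical point is the very choice $Q\approx A$: taking $Q$ much smaller would turn the product into a full $\Da$-th power of $A$-type terms, while taking $Q$ much larger would blow up the prefactor $Q^{\Da-1}$; balancing at $Q\approx A$ is precisely what trades the potential $\Da$-th power for the $(\Da-1)$-th power in the final bound, at the harmless cost of the combinatorial constant $3^\Da$.
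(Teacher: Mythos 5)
Your proof is correct and follows essentially the same route as the paper: reduce to Lemma \ref{latticeerrorboundary} using a basis of minimal orthogonality defect, invoke Proposition \ref{PM1}, and balance by taking $Q$ roughly equal to $\sqrt{\Da}\,\OD(\Lambda)L/\lambda_1$ so that each of the $\Da$ factors is at most $3$ while $Q^{\Da-1}$ stays of the right size. The only cosmetic difference is that you give a self-contained measurability argument (Lipschitz image of a cube of codimension one is Lebesgue-null, hence $\partial S$ is null and $S$ is measurable) where the paper cites a reference.
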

\begin{proof}
For $\Da=1$ the set $S$ is a union of at most $\M$ intervals (or
even single points) in which case the statement is trivial.
So we may assume $\Da>1$. For the measurability
we refer to \cite{20} Satz 7 p.294.
To prove the second statement we choose a basis with minimal
orthogonality defect. Thanks to (\ref{lipapp})
it suffices to estimate $\T$ corresponding to this basis. 
Using Proposition \ref{PM1} we see that $\T$ is bounded above by $\M Q^{\Da-1}(\frac{\sqrt{\Da-1}\OD(\Lambda)L}{\lambda_1Q}+2)^\Da$.
Now let us choose
$Q=[\frac{\sqrt{\Da}\OD(\Lambda)L}{\lambda_1}]+1$.
This leads straightforwardly to
\begin{alignat*}3
\T\leq 
3^\Da \M \left(\frac{\sqrt{\Da}\OD(\Lambda)L}{\lambda_1}+1\right)^{\Da-1}
\end{alignat*}
and the theorem is proved.
\end{proof}
For our application in Section \ref{subseccounting}
we need a more precise result which takes
into account not only the first but
also the other minima. 
\begin{theorem}\label{TWi1}
Let $\Lambda$ be a lattice in $\IR^\Da$
with successive minima (with respect to the
unit ball) $\lambda_1,...,\lambda_\Da$.
Let $S$ be a bounded set in $\IR^\Da$ such that
the boundary $\partial S$ of $S$ is in Lip$(\Da,1,M,L)$.
Then $S$ is measurable and moreover
\begin{alignat*}3
||S\cap\Lambda|-\frac{\Vol S}{\det \Lambda}|
\leq c_0(\Da)\M\max_{0\leq i<\Da}\frac{L^i}{\lambda_1...\lambda_i}.
\end{alignat*}
For $i=0$ the expression in the maximum is to be understood
as $1$. Furthermore one can choose $c_0(\Da)=\Da^{3\Da^2/2}$.
\end{theorem}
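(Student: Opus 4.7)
The plan is to refine Corollary \ref{TMV1} by choosing the parameter $Q$ in Proposition \ref{PM1} so as to exploit all the successive minima, not just $\lambda_1$. For $\Da=1$ the set $S$ is a union of at most $M$ intervals, so the statement is trivial; measurability of $S$ in general is the same standard fact invoked in Corollary \ref{TMV1}. From now on take $\Da>1$ and fix a basis $v_1,\dots,v_\Da$ of $\Lambda$ of minimal orthogonality defect, which by Lemma \ref{bouOD1} satisfies $\OD(\Lambda)\leq\Da^{3\Da/2}/(2\pi)^{\Da/2}$. Writing $c=\sqrt{\Da-1}\,\OD(\Lambda)$ and $A_i=cL/\lambda_i$, Proposition \ref{PM1} gives
$$
\T_S(F)\leq MQ^{\Da-1}\prod_{i=1}^{\Da}\bigl(A_i/Q+2\bigr)\qquad(Q\in\IN),
$$
and I would take $Q:=\max\{1,\lceil cL/\lambda_\Da\rceil\}$, splitting into two cases according to which term in the maximum is active.

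In the first case $\lambda_\Da>cL$ and $Q=1$. Then $A_\Da+2\leq 3$, while expanding $\prod_{i=1}^{\Da-1}(cL/\lambda_i+2)$ and using the monotonicity $\lambda_1\leq\dots\leq\lambda_{\Da-1}$ to bound each sub-product $\prod_{i\in I}\lambda_i^{-1}$ by $(\lambda_1\cdots\lambda_{|I|})^{-1}$ yields
$$
\T_S(F)\leq 3M(c+2)^{\Da-1}\max_{0\leq j\leq\Da-1}\frac{L^j}{\lambda_1\cdots\lambda_j}.
$$
In the second case $\lambda_\Da\leq cL$ and $Q=\lceil cL/\lambda_\Da\rceil\leq 2cL/\lambda_\Da$. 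Now $A_\Da/Q\leq 1$, and for $i<\Da$ a direct calculation gives $A_i/Q\leq\lambda_\Da/\lambda_i$; since $\lambda_\Da/\lambda_i\geq 1$ this yields $A_i/Q+2\leq 3\lambda_\Da/\lambda_i$. Hence $\prod_{i=1}^{\Da}(A_i/Q+2)\leq 3^{\Da}\lambda_\Da^{\Da-1}/(\lambda_1\cdots\lambda_{\Da-1})$, and multiplying by $Q^{\Da-1}\leq(2cL/\lambda_\Da)^{\Da-1}$ causes the $\lambda_\Da^{\Da-1}$ to cancel, leaving
$$
\T_S(F)\leq(6c)^{\Da}M\frac{L^{\Da-1}}{\lambda_1\cdots\lambda_{\Da-1}}.
$$
Lemma \ref{latticeerrorboundary} then gives the asserted bound in both cases.

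The main obstacle is recognising that no single choice of $Q$ handles all regimes of $\lambda_\Da/L$, which forces the two-case split; once that is identified, the rest is bookkeeping. The explicit constant $c_0(\Da)=\Da^{3\Da^2/2}$ emerges by substituting the Lemma \ref{bouOD1} bound into the $(O(c))^\Da$ factor and observing that the $(2\pi)^{\Da^2/2}$ in its denominator comfortably absorbs the remaining polynomial-in-$\Da$ loss.
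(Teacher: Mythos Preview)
Your proposal is correct and follows essentially the same approach as the paper: apply Proposition \ref{PM1} with a reduced basis, split into two regimes according to the size of $\lambda_\Da$ relative to $L$ (the paper splits at $L=\lambda_\Da$ rather than $cL=\lambda_\Da$, taking $Q=[L/\lambda_\Da]+1$ in the large-$L$ case, but this is cosmetic), and chase constants to $\Da^{3\Da^2/2}$. One caveat: your phrase ``comfortably absorbs'' is optimistic, since for $\Da=2$ the needed inequality $6\sqrt{\Da-1}\leq(2\pi)^{\Da/2}$ reads $6\leq 2\pi$ and holds only barely; in a polished write-up you should verify the small cases explicitly, as the paper does via the estimate $\kappa+2\leq\tfrac12\Da^{3\Da/2}$.
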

\begin{proof}
For the measurability see Corollary \ref{TMV1}. 
Since the case $\Da=1$ is straightforward we assume $\Da>1$.
As in the proof of Corollary \ref{TMV1} it suffices
to estimate $\T$ corresponding to a basis with
minimal orthogonality defect. 
To simplify notation we write $\kappa$ for 
$\sqrt{\Da-1}\OD(\Lambda)$.
It is convenient to distinguish two cases:\\

$(1)$ $L<\lambda_\Da:$\\
We use Proposition \ref{PM1} with $Q=1$. We estimate
the $\Da$-th term of the product by $\kappa+2$. So
\begin{alignat*}3
\T\leq \M(\kappa+2)
\prod_{i=1}^{\Da-1}\left(\frac{\kappa L}{\lambda_i}+2\right)
&\leq \M(\kappa+2)
\prod_{i=1}^{\Da-1}(\kappa+2)\left(\frac{L}{\lambda_i}+1\right)\\
&= \M(\kappa+2)^\Da
\prod_{i=1}^{\Da-1}\left(\frac{L}{\lambda_i}+1\right).
\end{alignat*}
Now we expand the remaining product and estimate each
of the $2^{\Da-1}$ terms in the resulting sum by 
$\max_{0\leq i<\Da}\frac{L^i}{\lambda_1...\lambda_i}$.
Hence
\begin{alignat}3
\label{ineqTwi1}
\T\leq \M(\kappa+2)^\Da 2^{\Da-1}
\max_{0\leq i<\Da}\frac{L^i}{\lambda_1...\lambda_i}.
\end{alignat}
Next we use Lemma \ref{bouOD1} and recall that
$\Da>1$ to estimate 
\begin{alignat*}3
\kappa+2\leq \frac{\sqrt{\Da-1}\Da^{3\Da/2}}{(2\pi)^{\Da/2}}+2\leq \frac{1}{2\pi}\Da^{3\Da/2}+\frac{1}{4}\Da^{3\Da/2}<\frac{1}{2}\Da^{3\Da/2}.
\end{alignat*}
Hence
\begin{alignat*}3
\T\leq \M \Da^{3\Da^2/2}
\max_{0\leq i<\Da}\frac{L^i}{\lambda_1...\lambda_i},
\end{alignat*}
which proves the theorem in the first case.\\

$(2)$ $L\geq \lambda_\Da:$\\ 
Note that in particular $L>0$.
Here we choose $Q=[\frac{L}{\lambda_\Da}]+1$
and we get
\begin{alignat*}3
\T\leq \frac{\M}{Q}\prod_{i=1}^{\Da}\left(\frac{\kappa L}{\lambda_i}+2Q\right)
&\leq \frac{\M\lambda_\Da}{L}
\prod_{i=1}^{\Da}\left(\frac{(\kappa+2) L}{\lambda_i}+2\right)\\
&\leq \M(\kappa+4)^\Da\frac{L^{\Da-1}}{\lambda_1...\lambda_{\Da-1}}\\
&\leq \M 2^\Da(\kappa+2)^\Da\frac{L^{\Da-1}}{\lambda_1...\lambda_{\Da-1}}
\end{alignat*}
where this last $\frac{L^{\Da-1}}{\lambda_1...\lambda_{\Da-1}}$
is now the maximum term in (\ref{ineqTwi1}).
We have already seen that (for $\Da>1$) 
$\kappa+2\leq 2^{-1}\Da^{3\Da/2}$ and so the result drops out.
\end{proof}
Theorem \ref{TWi1} can be considered as a version of
Schmidt's Theorem on p.15 in \cite{7} with different
and probably weaker 
conditions on the set.

\section{The basic set}\label{thebasicset}
Recall that $K$ is a number field of degree $d$ with $r$ real and $s$ pairs of complex conjugate
embeddings. Recall also the basic notation of an adelic Lipschitz system $\en$ on $K$
of dimension $n$.
The constants $C_{\en}, \M_{\en}, L_{\en}$ will be abbreviated
to $C, M, L$.
Lemma \ref{lemma2.8} and Lemma \ref{ZKHX} of the following sections have much in common with 
Lemma 3 and Lemma 4 of \cite{1}. For the convenience of the reader we tried to keep the notation of 
\cite{1} whenever possible. 
So let $q=r+s-1$, $\Sigma$ the hyperplane in $\IR^{q+1}$
defined by $x_1+...+x_{q+1}=0$ and 
$\vdelta=(d_1,...,d_{q+1})$ with 
$d_i=1$ for $1\leq i \leq r$ and $d_i=2$ 
for $r+1\leq i \leq r+s=q+1$.
The map $l(\eta)=(d_1\log|\sigma_1(\eta)|,...,d_{q+1}\log|\sigma_{q+1}(\eta)|)$
sends $K^*$ to $\IR^{q+1}$. For $q>0$ the image of the unit group
$\IU=\Oseen_K^*$ under $l$ is a lattice in $\Sigma$ with
determinant $\sqrt{q+1}R_K$.\\

Let $F$ be a bounded set in $\Sigma$ and for
real, positive $T$ let $F(T)$ be the vector sum
\begin{alignat}3
\label{vecsum1}
F(T)=F+\vdelta(-\infty,\log T].
\end{alignat}
We denote by $\exp$ the diagonal exponential map
from $\IR^{q+1}$ to $[0,\infty)^{q+1}$.
We have $r+s$ Lipschitz distance functions $N_1,...,N_{q+1}$
one for each factor of $\IR^r\times \IC^s$. We
use variables ${\bf z}_1,...,{\bf z}_{q+1}$ with ${\bf z}_i$
in $\IR^{d_i(n+1)}$. Now we define $S_F(T)$ in $\IR^D$
for $D=\sum_{i=1}^{q+1}d_i(n+1)=d(n+1)$ as the set of  all
${\bf z}_1,...,{\bf z}_{q+1}$ such that
\begin{alignat}3
\label{inkl1}
(N_1({\bf z}_1)^{d_1},...,N_{q+1}({\bf z}_{q+1})^{d_{q+1}}) \in \exp(F(T)).
\end{alignat}

\section{On Lipschitz parameterizability}\label{lpb}
As we have seen  in Section \ref{1subsec3} one can give good estimates for
the number of lattice points in a bounded set under rather mild conditions on the set such as the
Lipschitz parameterizability of the boundary.
As shown by Masser and Vaaler in \cite{1} Lemma 3
the condition $(iii)$ in Section \ref{secALH}
implies that the set
$S_F(T)$ has Lipschitz parameterizable boundary of
co-dimension one.
To see the dependence on $F,L,\M$ for the Lipschitz constant
we need an explicit
(up to dependence on $n,d$) version of this Lemma 3.
This can be done in a relatively straightforward manner and might be a bit tedious for the reader.
However, we have carried out this checking very carefully and to the
best of the author's knowledge this is the first detailed account of such matters in the 
literature, published and unpublished. But in order not to distract the reader too much from the basic line we
postpone the proof to the Appendix.
\begin{lemma}\label{lemma2.8}
Suppose $q\geq 1$ and let $F$ be a set in $\Sigma$ such that
$\partial F$ is in Lip$(q+1,2,\M',L')$ and moreover assume 
$F$ lies in $B_0(r_F)$. Then 
$\partial S_{F}(1)$ is in Lip$(D,1,\widetilde{\M},\widetilde{L})$
where one can choose
\begin{alignat*}3
\widetilde{\M}&=(\M'+1)\M^{q+1}\\
\widetilde{L}&=3\sqrt{D}(L'+r_F+1)
\exp(\sqrt{q}(L'+r_F))(L+C_{\en}^{inf}).
\end{alignat*}
\end{lemma}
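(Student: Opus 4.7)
The plan is to parameterize $\partial S_F(1)$ by Lipschitz maps defined on $[0,1]^{D-1}$. By scaling property $(ii)$ of Definition \ref{defALS}, any nonzero $\mathbf{z}_i \in \IR^{d_i(n+1)}$ can be written uniquely as $\mathbf{z}_i = N_i(\mathbf{z}_i)\,\mathbf{u}_i$ with $\mathbf{u}_i$ on the unit sphere $\{N_i(\mathbf{u})=1\}$, so membership in $S_F(1)$ is equivalent to the existence of $\mathbf{f}\in F$ and $t\in(-\infty,0]$ with $N_i(\mathbf{z}_i) = \exp(f_i/d_i + t)$ for each $i$. Hence $\partial S_F(1)$ splits into a \emph{radial} part (where $\mathbf{f}$ traces $\partial F$ with $t\in(-\infty,0]$) and a \emph{cap} part (where $t=0$ and $\mathbf{f}\in F$). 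Points with some but not all coordinates $\mathbf{z}_i=0$ do not belong to $\overline{S_F(1)}$, since $\vdelta$ has strictly positive entries, forcing all coordinates of $\exp(F(1))$ to vanish simultaneously as $t\to-\infty$; the origin alone appears as a limit.

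For the radial part I take the $\M'$ maps $\phi:[0,1]^{q-1}\to\partial F$ given by the hypothesis on $\partial F$, the $\M$ maps $\psi_i:[0,1]^{d_i(n+1)-1}\to\{N_i=1\}$ supplied by $(iii)$ of Definition \ref{defALS}, and the reparametrization $s = e^{t} \in [0,1]$ of $t \in (-\infty,0]$. This produces
\begin{equation*}
\Phi(\mathbf{x}_0, s, \mathbf{y}_1,\ldots,\mathbf{y}_{q+1}) = \bigl(e^{\phi_i(\mathbf{x}_0)/d_i}\,s\,\psi_i(\mathbf{y}_i)\bigr)_{i=1}^{q+1}
\end{equation*}
on a domain of dimension $(q-1) + 1 + \sum_i(d_i(n+1)-1) = D-1$. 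The choice $s=e^{t}$ absorbs the origin (reached at $s=0$) without requiring a separate parameterization. For the cap part I cover $F$ by a single Lipschitz map $[0,1]^q \to \Sigma \cap B_0(r_F)$, obtained by parameterizing a $q$-dimensional box in $\Sigma$ containing $F$, and couple it with the same $\psi_i$ to obtain another map on $[0,1]^{D-1}$. Counting all combinations gives $\M' \cdot \M^{q+1} + 1 \cdot \M^{q+1} = (\M'+1)\M^{q+1} = \widetilde{\M}$, as claimed.

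The Lipschitz estimate is the main technical hurdle. Since $\phi$ starts at a point of $\partial F \subseteq B_0(r_F)$ and has Lipschitz constant $L'$ on a domain of diameter at most $\sqrt{q-1}\leq\sqrt{q}$, one has $|\phi_i(\mathbf{x}_0)| \leq r_F + L'\sqrt{q}$, hence $e^{\phi_i/d_i}\leq \exp(\sqrt{q}(L'+r_F))$, together with a Lipschitz constant of $(L'/d_i)\exp(\sqrt{q}(L'+r_F))$ for $\mathbf{x}_0 \mapsto e^{\phi_i/d_i}$. The length $|\psi_i(\mathbf{y}_i)|$ is controlled via (\ref{Nineq1}): from $N_i(\psi_i)=1$ and $N_i \geq c_{v_i} \max$ with $c_{v_i}\geq 1/C_{\en}^{inf}$ one gets $|\psi_i(\mathbf{y}_i)|\leq\sqrt{d_i(n+1)}\,C_{\en}^{inf}$. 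Applying the product rule to $\mathbf{z}_i = e^{\phi_i/d_i}\,s\,\psi_i(\mathbf{y}_i)$ yields three contributions, bounded respectively by quantities of order $L'C_{\en}^{inf}$, $C_{\en}^{inf}$ and $L$, each multiplied by $\exp(\sqrt{q}(L'+r_F))\sqrt{d_i(n+1)}$. Squaring and summing over $i=1,\ldots,q+1$ brings in a final factor $\sqrt{(q+1)(n+1)}\leq\sqrt{D}$, and folding the remaining constants into a factor of $3$ gives exactly $\widetilde{L} = 3\sqrt{D}(L'+r_F+1)\exp(\sqrt{q}(L'+r_F))(L+C_{\en}^{inf})$.

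The main obstacle, which is the reason the detailed verification is deferred to the appendix, is keeping the exponential sharp: the factor $L'\sqrt{q-1}$ bounding the range of $\phi$ must enter inside $\exp(\cdot)$ rather than outside as a separate polynomial prefactor, and the various polynomial factors in $d_i$ and $n+1$ must collapse uniformly into a single $\sqrt{D}$ through Cauchy--Schwarz. The cap-part estimate proceeds in parallel but is strictly simpler, since it involves no $s$ variable and the parameterization of $F$ itself already has Lipschitz constant of order $r_F$, well within the bound above.
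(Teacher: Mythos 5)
Your proposal follows essentially the same route as the paper's appendix proof: the same decomposition of $\partial(\exp(F(1)))$ into a radial part over $\partial F$, a cap over $\overline{F}$, and the origin absorbed via the substitution $s=e^t$; the same product structure with the unit-sphere parameterizations; the same count $(\M'+1)\M^{q+1}$; and the same way of keeping the $L'$-contribution inside the exponential when bounding $e^{\phi_i/d_i}$ and then assembling $\widetilde{L}$ by a product-rule/Cauchy--Schwarz argument. The details match the paper's Lemma and its appendix argument throughout.
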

\begin{proof}
See Appendix.
\end{proof}
Notice that for $q=0$ the boundary of $S_F(1)$ is nothing
but the set defined in $(iii)$ Section \ref{secALH} (for $v\mid\infty$)
and so in this case we have $\partial S_F(1)$ lies in
Lip$(D,1,M,L)$.\\

In our first application $F$ will have the form
\begin{alignat}3
\label{Flpb2}
[0,1)v_1+...+[0,1)v_{q}
\end{alignat}
for $v_1,...,v_q$ in $\IR^{q+1}$ with $|v_1|,...,|v_q|<1$.
It is easy to see that $\partial F$ is Lipschitz parameterizable;
a typical boundary point has the form $x_1v_1+...+x_qv_q$
with some $x_i=0$ or $1$, so for example if $i=q$
then this expression gives a parameterization on the variables
$x_1,...,x_{q-1}$. We find in this way that $\partial F$
is in Lip$(q+1,2,2q,q-1)$.

\section{Schmidt's partition method}
\label{schmidtstrick}

First suppose $q>0$. Recall the standard logarithmic map $l$ from
$K^*$ to $\IR^{q+1}$ (see Section \ref{thebasicset}). 
We choose $F$ as a fundamental domain of the unit
lattice $l(\IU)$ 
\begin{alignat*}3 
F=[0,1)u_1+...+[0,1)u_q
\end{alignat*}
where
$U=(u_1,...,u_q)$ is a basis of $l(\IU)$.
A major step of the proof is the counting of
lattice points in the set $S_F(T)$. This will be carried out 
with the help of Theorem \ref{TWi1}. But here the relevant 
Lipschitz constants may depend on the units in a fatal way.
In fact $F$ has volume $\sqrt{q+1}R_K$ and so if we are unlucky then it might not lie in a ball of radius much smaller than $R_K$. Thus $\exp(F)$ might not lie in a ball of radius much smaller than $\exp(R_K)$. This might introduce Lipschitz constants of this size and consequently the error terms in the counting could be this large. That however is far from what we claim in Theorem \ref{prop3}. 
And such an exponential dependence on $R_K$ would be disastrous for the summation techniques in the main application
following in \cite{art2}.
To overcome this problem we extend an idea of
Schmidt \cite{14} from the real-quadratic case $d=2$ to
arbitrary $d$ (see also \cite{7} for $d>2$).\\

Let us carry out the details. First we define
the $q+1$ natural numbers
\begin{alignat}3
\label{defnj}
n_j&=[|u_j|]+1 \quad (1\leq j \leq q),\\
\label{deft} 
t&=n_1...n_q. 
\end{alignat}
Let $Q=|\{\beta \in \Qbar;[\IQ(\alpha):\IQ]\leq d, \log H(1,\alpha)\leq 1\}|$. If $\alpha$ of degree at most $d$ is neither zero
nor a root of unity then the $Q+1$ numbers $1,\alpha,...,\alpha^Q$ are pairwise distinct and therefore $\log H(1,\alpha^Q)>1$, so
\begin{alignat*}3 
\log H(1,\alpha)> Q^{-1}.
\end{alignat*}
We take $\alpha=\eta_j$ for $l(\eta_j)=u_j$ to deduce
\begin{alignat*}3 
\exp(d/Q)\leq H(1,\eta_j)^d=\prod_{i=1}^{q+1}\max\{1,|\sigma_i(\eta_j)|^{d_i}\}.
\end{alignat*}
It follows that $|\sigma_i(\eta_j)|\geq \exp(1/Q)$ for some
$i$. Thus 
\begin{alignat*}3
|u_j|^2=\sum_{k=1}^{q+1}d_k^2\log^2|\sigma_k(\eta_j)|\geq (1/Q)^2
\end{alignat*}
and so
\begin{alignat*}3 
|u_j|\geq 1/Q > 0,
\end{alignat*}
where $Q$ depends only on $d$.
The inequality above implies 
$[|u_j|]+1\leq (1+Q)|u_j|$.
Recalling the definition of the orthogonality defect
$\OD(U)$ of $U$ and not forgetting that 
$\det l(\IU)=\sqrt{q+1}R_K$ yields
\begin{alignat*}3
\sqrt{q+1}R_K<t \leq (1+Q)^q \OD(U) \sqrt{q+1}R_K.
\end{alignat*}
Now we choose a reduced basis $U$ so that according to Lemma \ref{bouOD1} we have 
in particular $\OD(U)\leq d^{2d}$, provided $q>1$. But the latter inequality trivially remains true for $q=1$.
Hence there is a constant $c_d$ depending only on $d$ with
\begin{alignat}3
\label{tAbsch}
R_K<t\leq c_d R_K.
\end{alignat}
We define
\begin{alignat}3
\label{F}
F({\bf i})=i_1\frac{u_1}{n_1}+...+i_q\frac{u_q}{n_q}+
[0,1)\frac{u_1}{n_1}+...+[0,1)\frac{u_q}{n_q}
\end{alignat}
with ${\bf i}=(i_1,...,i_{q})$ for $0\leq i_j < n_j$ ($1\leq j \leq q$). Then the 
partition $F=\bigcup_{\bf{i}}F(\bf{i})$ leads to a partition
\begin{alignat}3
\label{partSF1}
S_F(T)=\bigcup_{\bf{i}}S_{F({\bf i})}(T)
\end{alignat}
in $t$ subsets. For each of these $t$ vectors
${\bf i}$ we define a translation $\tr$
on $\IR^{q+1}$ by 
\begin{alignat*}3
\tr(x)=x-\sum_{j=1}^{q}\frac{i_ju_j}{n_j}.
\end{alignat*}
This translation sends $\Sigma$ to $\Sigma$ and
$F({\bf i})$ to $F({\bf 0})$.
It has an exponential counterpart $\etr$ defined by
$\etr(\exp(x))=\exp(\tr(x))$ and this takes the form
\begin{alignat*}3
\etr(X_1,...,X_{q+1})=
(\gamma_1^{d_1}X_1,...,\gamma_{q+1}^{d_{q+1}}X_{q+1})
\end{alignat*}
for positive real $\gamma_1,...,\gamma_{q+1}$, depending on
${\bf i}$, with
\begin{alignat}3
\label{prodgammai}
\gamma_1^{d_1}...\gamma_{q+1}^{d_{q+1}}=1.
\end{alignat}
We define the automorphism $\ti$ of $\IR^D$ by
\begin{alignat}3
\label{ti}
\ti({\bf z}_1,...,{ \bf z}_{q+1})=(\gamma_1{\bf z}_1,...,\gamma_{q+1}{\bf z}_{q+1}),
\end{alignat}
so that
\begin{alignat}3
\label{dettaui}
\det \ti=1.
\end{alignat}
Now 
\begin{alignat*}3
\etr(\exp(F({\bf i})(T)))=\exp(\tr(F({\bf i})(T)))
=\exp(F({\bf 0})(T))
\end{alignat*}
and so (\ref{inkl1}) together with $(ii)$ of Section \ref{secALH} gives
\begin{alignat}3
\label{tr1}
\ti S_{F({\bf i})}(T)=S_{F({\bf 0})}(T).
\end{alignat}
The identity
\begin{alignat}3
\label{homexp1}
S_{F}(T)=T S_{F}(1)
\end{alignat}
holds for any $F$ in $\Sigma$ whatsoever
and in particular
\begin{alignat}3
\label{homexp2}
S_{F({\bf 0})}(T)=T S_{F({\bf 0})}(1).
\end{alignat}
Thanks to (\ref{defnj}) and the triangle inequality,
$|\theta_1\frac{u_1}{n_1}+...+\theta_q\frac{u_q}{n_q}|\leq q$
holds for any $\theta_j\in [0,1)$.
From the definition of $F({\bf 0})$ and $S_{F({\bf 0})}$
it follows that
\begin{alignat}3
\label{OM1}
S_{F({\bf 0})}(1)\subseteq\{({\bf z}_{1},...,{\bf z}_{q+1});
N_i({\bf z}_{i})^{d_i}\leq \exp(q)  \text{ for }1\leq i\leq q+1\}.
\end{alignat}
On recalling the definition (\ref{defcinf}) of $C^{inf}_{\en}$ the above inclusion together with
(\ref{homexp2}) yields
\begin{alignat}3
\label{SF0supset}
S_{F({\bf 0})}(T)\subseteq B_0(\d_1T)
\end{alignat}
where $\d_1=\sqrt{d(n+1)}C^{inf}_{\en}\exp(q)$
and $B_0(\d_1T)$ denotes the euclidean ball
centered at the origin with radius $\d_1T$.\\
From now on let $\bf{i}$ be fixed so that we may drop
the index and write $\tau$.
The ${\bf z}_i$ lie in $\IR^{n+1}$ or $\IC^{n+1}$.
By abuse of notation we temporarily set $n=0$ so that we may interpret these vectors for
a moment as numbers in $\IR$ or $\IC$. Then
the right hand side of (\ref{ti}) defines an automorphism 
of $\IR^d$, say $p_{\tau}$ with
\begin{alignat}3
\label{dettau0}
\det p_{\tau}=1.
\end{alignat}
Notice that for a set $X$ in $\IR^d$ one has
$\tau(X^{n+1})=(p_{\tau}(X))^{n+1}$ in $\IR^{d(n+1)}=\IR^D$.
However, it will be more convenient to
write $\ta_0$ for $p_{\tau}$, just as the $\sigma$ in
(\ref{sigd}) is simply the $\sigma$ in $(\ref{sigD})$ with $n=0$.\\

Now suppose $q=0$. In this case the only units are roots of
unity and we set $F=\v0$. Here we may apply the counting principles of Section \ref{1subsec3} to the set $S_F(T)$ directly
without running into the difficulty of getting huge
Lipschitz constants. In order to treat this rather
easy case simultaneously with the more interesting case
$q>0$ it will be convenient to define the set of the vectors $\bf{i}$
as the set $\{\v0\}$ consisting only of the single vector
$\v0=(0)$ and we set $t=1$. Then we define $S_{F({\bf i})}(T)=S_{F({\bf 0})}(T)=S_{F}(T)$ and moreover
$\ti=\tau_{\bf{0}}$ is the identity automorphism. Hence an expression like $\bigcup_{\bf{i}}S_{F({\bf i})}(T)$ is to be understood
as $S_F(T)$. With these conventions
(\ref{tAbsch}), (\ref{partSF1}) and also (\ref{tr1}),
(\ref{homexp1}), (\ref{homexp2}), (\ref{OM1}), (\ref{SF0supset}) and (\ref{dettau0}) remain valid.

\section{Estimates for the minima}\label{Estminima}
We define the non-zero ideal $\C_0$ by
\begin{alignat}3
\label{C0}
\C_0=\prod_{v\nmid \infty}\pw_v^{-\frac{d_v\log c_v}{\log N \pw_v}}
\end{alignat}
with $c_v$ as in (\ref{defcfin}).
Thus $|\C_0|_v=c_v$ and
\begin{alignat}3
\label{NC0}
N\C_0=(C_{\en}^{fin})^d.
\end{alignat}
Let $\D\neq 0$ be a fractional ideal.
Clearly $|\alpha|_v\leq |\C_0^{-1}\D|_v$ for all
non-archimedean $v$ is equivalent to $\alpha \in \C_0^{-1}\D$.
By (\ref{Nineq1}) we conclude
\begin{alignat}3
\label{OG}
\Lamen(\D) \subseteq \sigma(\C_0^{-1}\D)^{n+1}.
\end{alignat}
Since $\en$ is fixed we can omit the index
and simply write $\Lam(\D)$ for $\Lamen(\D)$.
Certainly $\g0$ is a lattice  
in $\IR^d$. For each $\D$ we choose linearly independent 
vectors 
\begin{alignat*}3
v_1=\ta_0\sigma(\theta_1),...,v_d=\ta_0\sigma(\theta_d)
\end{alignat*}
of the lattice $\g0$ with 
\begin{alignat}3
\label{visuccmin}
|v_i|&=\lambda_i(\g0) \qquad (1\leq i\leq d)
\end{alignat}
for the successive minima.
Since $v_1,...,v_d$ are $\IR$-linearly independent,
$\ta_0^{-1}v_1,...,\ta_0^{-1}v_d$ are also $\IR$-linearly 
independent. Hence $\theta_1,...,\theta_d$ are $\IQ$-linearly 
independent and therefore $\frac{\theta_1}{\theta_1},...,\frac{\theta_d}{\theta_1}$
are $\IQ$-linearly independent. Now $[K:\IQ]=d$ implies  $K=\IQ(\frac{\theta_1}{\theta_1},...,\frac{\theta_d}{\theta_1})
=k(\frac{\theta_1}{\theta_1},...,\frac{\theta_d}{\theta_1})$
and this allows the following definition.
\begin{definition}\label{defl}
Let  $l \in \{1,...,d\}$ be minimal with 
$K=k(\frac{\theta_1}{\theta_1},...,\frac{\theta_{l}}{\theta_1})$.
\end{definition}
In principle $l$ depends on $k$, on the lattice
$\g0$ and on the choice of $v_1,...,v_d$.
So it depends on $k$, on
$\ta_0$ and  on $\C_0$, $\D$. But $\ta_0=\ta_0(\i)$ itself depends 
on $\i$ and on the basis $U$ of the unit lattice.
However, $k$, $\C_0$ and the choice of $U$ are fixed and for every
$\g0$ the choice of $v_1,...,v_d$ is fixed also
such that $l=l(\i,\D)$ depends only on the ideal
$\D$ and on the vector $\i$. Moreover we have the
following statement
which for $k=\IQ$ is Lemma 2.1 of \cite{7}. 
\begin{lemma}\label{lemma2.2}
We have
\begin{alignat*}3
l\leq \left[\frac{d}{2}\right]+1.
\end{alignat*}
\end{lemma}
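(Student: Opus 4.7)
The plan is to exploit the minimality of $l$: it forces $K_{l-1} := k(\theta_2/\theta_1,\ldots,\theta_{l-1}/\theta_1)$ to be a proper subfield of $K$ containing $k$, which caps its $\IQ$-degree at $d/2$, while the $l-1$ ratios produce $l-1$ linearly independent elements inside it. The bound then drops out by comparing these two estimates.

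First I would dispose of the trivial case $l=1$, where $1 \leq [d/2]+1$ is immediate. So assume $l \geq 2$. By the minimality in Definition \ref{defl} we have $K_{l-1} \neq K$, so $K_{l-1} \subsetneq K$ is a proper intermediate subfield. Consequently $[K:K_{l-1}] \geq 2$, whence
\begin{alignat*}3
[K_{l-1}:\IQ] \leq \tfrac{1}{2}[K:\IQ] = \tfrac{d}{2}.
\end{alignat*}

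Next I would verify that the $l-1$ elements $1, \theta_2/\theta_1, \ldots, \theta_{l-1}/\theta_1$ lie in $K_{l-1}$ and are $\IQ$-linearly independent. Membership is immediate from the definition of $K_{l-1}$. For linear independence, a $\IQ$-relation $a_1 + \sum_{j=2}^{l-1} a_j \theta_j/\theta_1 = 0$ becomes $a_1\theta_1 + \sum_{j=2}^{l-1} a_j\theta_j = 0$ after multiplication by $\theta_1$; but $\theta_1,\ldots,\theta_{l-1}$ are $\IQ$-linearly independent, as established in the paragraph preceding Definition \ref{defl}, so all $a_j=0$.

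Combining the two displays gives $l - 1 \leq [K_{l-1}:\IQ] \leq d/2$, i.e.\ $l \leq d/2 + 1$. Since $l$ is a positive integer, this sharpens to $l \leq [d/2] + 1$. No serious obstacle arises; the only cosmetic subtlety is the final rounding step, which is automatic for even $d$ and forced by integrality of $l$ for odd $d$.
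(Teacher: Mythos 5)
Your argument is correct and is essentially the paper's own proof, presented in the direct (contrapositive) form rather than by contradiction: both rest on the observation that a proper subfield of $K$ has $\IQ$-degree at most $d/2$ and that the $\IQ$-linearly independent ratios $\theta_i/\theta_1$ bound that degree from below. The only cosmetic differences are the direction of the argument and your explicit verification of the linear independence of the ratios, which the paper takes as read.
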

\begin{proof}
Assume the statement is false then there exists a
proper subfield $K_0$ of $K$ containing the $[\frac{d}{2}]+1$ $\IQ$-linearly independent numbers $\frac{\theta_i}{\theta_1}$ for $1\leq i\leq [\frac{d}{2}]+1$. But $[K_0:\IQ]\leq d/2$ and so $K_0$ contains no more than $d/2$ $\IQ$-linearly 
independent numbers contradicting the fact $[\frac{d}{2}]+1>d/2$.
\end{proof}

We abbreviate 
\begin{alignat}3
\label{deflami1}
\lambda_i=\lambda_i(\g0)
\end{alignat}
for $1\leq i\leq d$.
\begin{lemma}\label{lemma2.3}
Assume $a \in \{1,...,d\}$ and $\mu_1,...,\mu_a$ in $\IR$
with $\mu_a\neq 0$ are such that $w=\mu_1v_1+...+\mu_av_a$ lies in $\g0$. 
Then we have 
\begin{alignat*}3
|w| \geq \lambda_a.
\end{alignat*}
\end{lemma}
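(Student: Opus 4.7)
The plan is to reduce this directly to Lemma \ref{lemma1.3}, which is essentially the same statement phrased in terms of a subspace rather than a linear combination. The key observation is that the hypothesis $\mu_a \neq 0$, combined with the $\IR$-linear independence of $v_1,\ldots,v_d$, forces $w$ to lie outside the span of $v_1,\ldots,v_{a-1}$.

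First I would dispose of the case $a=1$. Here $w = \mu_1 v_1$ with $\mu_1 \neq 0$, so $w$ is a nonzero element of the lattice $\g0$, and by the definition of the first minimum $|w| \geq \lambda_1$.

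For $a \geq 2$, let $V = \operatorname{span}_{\IR}(v_1,\ldots,v_{a-1})$, a subspace of $\IR^d$ of dimension $a-1$. By construction $V$ contains the $a-1$ linearly independent lattice vectors $v_1,\ldots,v_{a-1}$ of $\g0$, and by (\ref{visuccmin}) these satisfy $|v_j| = \lambda_j$ for $1 \leq j \leq a-1$. The linear independence of $v_1,\ldots,v_a$ together with $\mu_a \neq 0$ implies $w \notin V$: otherwise we could express $w$ as a linear combination of $v_1,\ldots,v_{a-1}$ alone, contradicting the uniqueness of representation in the basis $v_1,\ldots,v_a$ of $\operatorname{span}(v_1,\ldots,v_a)$. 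Applying Lemma \ref{lemma1.3} to the lattice $\g0$ (with successive minima $\lambda_1,\ldots,\lambda_d$) and the subspace $V$ with $i = a$ then yields $|w| \geq \lambda_a$, as required.

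There is no serious obstacle here: the statement is a rephrasing of Lemma \ref{lemma1.3}, and the only thing to verify is that $w$ escapes the span of the first $a-1$ chosen minimizing vectors, which is immediate from linear independence.
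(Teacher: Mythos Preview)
Your proof is correct and follows exactly the same route as the paper: treat $a=1$ directly, and for $a\geq 2$ apply Lemma~\ref{lemma1.3} with $V=\IR v_1+\cdots+\IR v_{a-1}$. The paper's proof is just the one-line version of what you wrote; you have merely made explicit why $w\notin V$.
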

\begin{proof}
For $a=1$ it is clear. For $a>1$ we apply Lemma \ref{lemma1.3}
of Section \ref{1subsec1} with $V=\IR v_1+...+\IR v_{a-1}$.
\end{proof}

\begin{lemma}\label{lemma2.4}
Assume $l\geq 2$, and let $\omega_0,...,\omega_n$ in $K$
be not all zero with $k(\omega_0:...:\omega_n)=K$.
Then not all of the $\omega_0,...,\omega_n$ are in
$k\theta_1+...+k\theta_{l-1}$.
\end{lemma}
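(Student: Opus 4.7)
The plan is to argue by contradiction, exploiting the minimality of $l$ in Definition \ref{defl}. Set
\[
K_0 = k\!\left(\tfrac{\theta_2}{\theta_1},\ldots,\tfrac{\theta_{l-1}}{\theta_1}\right),
\]
interpreted as $K_0 = k$ in case $l=2$. By the minimality of $l$, the field generated over $k$ by $\tfrac{\theta_1}{\theta_1},\ldots,\tfrac{\theta_{l-1}}{\theta_1}$ is a proper subfield of $K$; since $\tfrac{\theta_1}{\theta_1}=1$, this proper subfield is precisely $K_0$.

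Suppose for contradiction that every $\omega_i$ lies in $V = k\theta_1+\cdots+k\theta_{l-1}$, so that we may write
\[
\omega_i = \sum_{j=1}^{l-1} a_{ij}\theta_j, \qquad a_{ij}\in k, \quad 0\le i\le n.
\]
Since the $\omega_i$ are not all zero, there is some index $j_0$ with $\omega_{j_0}\neq 0$. Then
\[
\frac{\omega_i}{\omega_{j_0}}
= \frac{\sum_{j=1}^{l-1} a_{ij}\,(\theta_j/\theta_1)}{\sum_{j=1}^{l-1} a_{j_0 j}\,(\theta_j/\theta_1)}
\in K_0
\]
for every $i$ with $\omega_i\neq 0$, after cancelling a common factor of $\theta_1$ from numerator and denominator.

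This shows that $k(\omega_0:\ldots:\omega_n) = k(\omega_i/\omega_{j_0}: 0\le i\le n)\subseteq K_0$. But by hypothesis the left-hand side equals $K$, so we obtain $K\subseteq K_0\subsetneq K$, a contradiction. Hence not all $\omega_i$ can lie in $k\theta_1+\cdots+k\theta_{l-1}$. The only real step here is the factoring out of $\theta_1$ to pass from $V$ to $K_0$; once that is done, the minimality of $l$ delivers the contradiction immediately, so no essential obstacle is expected.
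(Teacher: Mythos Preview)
Your proof is correct and essentially identical to the paper's: both define $K_0=k(\theta_1/\theta_1,\ldots,\theta_{l-1}/\theta_1)\subsetneq K$, assume all $\omega_i$ lie in $k\theta_1+\cdots+k\theta_{l-1}$, factor out $\theta_1$ to show each ratio $\omega_i/\omega_{j_0}$ lies in $K_0$, and derive the same contradiction with $k(\omega_0:\ldots:\omega_n)=K$.
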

\begin{proof}
Set $K_0=
k(\frac{\theta_1}{\theta_1},...,\frac{\theta_{l-1}}{\theta_1})$.
By definition of $l$ we have $K_0\subsetneq K$.
Let $a,b$ be in  $\{0,...,n\}$ with $\omega_b \neq 0$.
Suppose $\omega_a, \omega_b$ are in $k\theta_1+...+k\theta_{l-1}$. Then there are $\alpha_j,\beta_j$ ($1\leq j\leq l-1$) in $k$ 
such that
\begin{alignat*}3
\frac{\omega_a}{\omega_b}=
\frac{\sum_{j=1}^{l-1}\alpha_j\theta_j}
{\sum_{j=1}^{l-1}\beta_j\theta_j}=
\frac{\sum_{j=1}^{l-1}\alpha_j\frac{\theta_j}{\theta_1}}
{\sum_{j=1}^{l-1}\beta_j\frac{\theta_j}{\theta_1}}.
\end{alignat*}
But numerator and denominator of the last fraction are in $K_0$
and so $\frac{\omega_a}{\omega_b}$ is in $K_0$.
So if all $\omega_0,...,\omega_n$ are in $k\theta_1+...+k\theta_{l-1}$ then 
$k(\omega_0:...:\omega_n)\subseteq K_0$ - a contradiction.
\end{proof}

\begin{lemma}\label{lemma2.5}
Let $ \omega_0,...,\omega_n $ be in $ \C_0^{-1}\D$ not all zero
with $k(\omega_0:...:\omega_n)=K$. Then for  $v=(\ta_0\sigma\omega_0,...,\ta_0\sigma\omega_n)$
in $\IR^D$
we have
\begin{alignat*}3
|v| \geq \lambda_l.
\end{alignat*}
\end{lemma}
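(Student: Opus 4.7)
The plan is a proof by contradiction: suppose $|v| < \lambda_l$ and use Lemma \ref{lemma2.4} to contradict the primitivity hypothesis $k(\omega_0:\ldots:\omega_n)=K$. First I would dispatch the trivial case $l=1$ separately: since the $\omega_i$ are not all zero and $\sigma$ is injective on $K$, some $\tau\sigma\omega_i$ is a nonzero element of the lattice $\tau\sigma(\C_0^{-1}\D)$, and hence $|v|\geq|\tau\sigma\omega_i|\geq\lambda_1$.

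Now assume $l\geq 2$ and $|v|<\lambda_l$. Then a fortiori $|\tau\sigma\omega_i|<\lambda_l$ for every $i$. Since $\omega_i\in\C_0^{-1}\D$, each $\tau\sigma\omega_i$ lies in the lattice $\tau\sigma(\C_0^{-1}\D)$. I would expand $\tau\sigma\omega_i = \sum_{j=1}^d \mu_{i,j}v_j$ in the $\IR$-basis $v_1,\ldots,v_d$ of $\IR^d$, take the largest index $a$ for which $\mu_{i,a}\neq 0$, and invoke Lemma \ref{lemma2.3} to deduce $|\tau\sigma\omega_i|\geq\lambda_a$. Combined with $|\tau\sigma\omega_i|<\lambda_l$, this forces $a<l$, so $\tau\sigma\omega_i\in\IR v_1+\cdots+\IR v_{l-1}$ for every $i$, and applying $\tau^{-1}$ gives $\sigma\omega_i\in\IR\sigma\theta_1+\cdots+\IR\sigma\theta_{l-1}$.

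The main step is then to upgrade this $\IR$-span condition on the images $\sigma\omega_i$ to a $k$-span condition on the $\omega_i$ themselves. For this I would invoke the fact (already recorded in the paragraph preceding Definition \ref{defl}) that $\theta_1,\ldots,\theta_d$ are $\IQ$-linearly independent in $K$, hence form a $\IQ$-basis of $K$; applying $\sigma$ then gives an $\IR$-basis $\sigma\theta_1,\ldots,\sigma\theta_d$ of $\IR^d$. Writing $\omega_i=\sum_{j=1}^d q_{i,j}\theta_j$ with uniquely determined $q_{i,j}\in\IQ$ and applying $\sigma$ coordinate-wise, the uniqueness of expansions in a basis of $\IR^d$ forces $q_{i,j}=0$ for all $j\geq l$. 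Consequently each $\omega_i$ lies in $\IQ\theta_1+\cdots+\IQ\theta_{l-1}\subseteq k\theta_1+\cdots+k\theta_{l-1}$, contradicting Lemma \ref{lemma2.4}.

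I expect this $\IQ$-rationality passage to be the only nontrivial point: everything else is essentially bookkeeping with Lemma \ref{lemma2.3} and the inclusion $\Lamen(\D)\subseteq\sigma(\C_0^{-1}\D)^{n+1}$. The content of the argument is that primitivity of $(\omega_0:\ldots:\omega_n)$ over $k$ — a condition about $k$-linear dependencies — can only be reconciled with the $\IR$-linear length estimate via the bridge that $\theta_1,\ldots,\theta_d$ serves both as a $\IQ$-basis of $K$ and, after embedding, as an $\IR$-basis of $\IR^d$.
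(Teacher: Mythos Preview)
Your argument is correct and uses the same ingredients as the paper's proof: Lemma \ref{lemma2.3}, Lemma \ref{lemma2.4}, and the rationality of the coefficients $\mu_{i,j}$. The paper argues directly rather than by contradiction, and obtains the rationality of the $\mu_{i,j}$ in one line by noting that $v_1,\ldots,v_d$ generate a finite-index sublattice of $\tau\sigma(\C_0^{-1}\D)$ (so any lattice element has rational coordinates); this replaces your separate $\IQ$-basis passage but is logically equivalent.
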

\begin{proof}
Each of the $\ta_0\sigma\omega_0$,...,
$\ta_0\sigma\omega_n$ lies in the lattice $\g0$. 
The sublattice generated by $v_1,...,v_d$ has finite index
in $\g0$. Hence there are $\mu_j^{(i)} \in \IQ$ such that 
\begin{alignat*}3
v=\left(\sum_{j=1}^{d}\mu_j^{(0)}v_j,...,\sum_{j=1}^{d}\mu_j^{(n)}v_j\right).
\end{alignat*}
Lemma \ref{lemma2.4} and the condition 
$K=k(\omega_0:...:\omega_n)$ imply at least one
of the numbers
$\mu_j^{(i)}$ for $l\leq j\leq d$, $0\leq i \leq n$ 
is non-zero and so the result follows by Lemma \ref{lemma2.3}.
\end{proof}
\begin{lemma}\label{lemma2.6}
If $l\geq 2$ then 
\begin{alignat}3
\label{case2}
\frac{l-1}{m}\leq [k\left(\frac{\theta_1}{\theta_1},...,\frac{\theta_{l-1}}{\theta_1}\right):k]
\leq \max\{1,e/2\}.
\end{alignat}
\end{lemma}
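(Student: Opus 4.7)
The plan is to analyze the field $K_0 := k(\theta_1/\theta_1,\ldots,\theta_{l-1}/\theta_1) = k(\theta_2/\theta_1,\ldots,\theta_{l-1}/\theta_1)$ and prove the two inequalities separately. The key auxiliary fact, which has already been established in the paragraph preceding Definition \ref{defl}, is that $\theta_1,\ldots,\theta_d$ are $\IQ$-linearly independent (they are the preimages under $\ta_0\sigma$ of the $\IR$-linearly independent minima vectors $v_1,\ldots,v_d$).

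For the lower bound, I would show that the $l-1$ elements $1,\theta_2/\theta_1,\ldots,\theta_{l-1}/\theta_1$ of $K_0$ are $\IQ$-linearly independent. Indeed, a relation $c_0+c_2(\theta_2/\theta_1)+\cdots+c_{l-1}(\theta_{l-1}/\theta_1)=0$ with $c_j\in\IQ$ becomes, after multiplication by $\theta_1$, a $\IQ$-linear dependence $c_0\theta_1+c_2\theta_2+\cdots+c_{l-1}\theta_{l-1}=0$, which forces all $c_j=0$. Hence $[K_0:\IQ]\geq l-1$, and from the tower $[K_0:\IQ]=m[K_0:k]$ we conclude $[K_0:k]\geq (l-1)/m$, as required.

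For the upper bound, the main point is the minimality of $l$ in Definition \ref{defl}: this tells us exactly that $K_0\subsetneq K$. Note that $l\geq 2$ already forces $e\geq 2$, since if $e=1$ then $k=K$ and even the empty adjoining would give $K$, so $l=1$ would be allowed. Now writing $K=k(\theta_1/\theta_1,\ldots,\theta_l/\theta_1)=K_0(\theta_l/\theta_1)$, the strict inclusion $K_0\subsetneq K$ gives $[K:K_0]\geq 2$, and therefore
\begin{alignat*}1
[K_0:k]=\frac{e}{[K:K_0]}\leq \frac{e}{2}.
\end{alignat*}
Since $e\geq 2$ we have $e/2=\max\{1,e/2\}$, which completes the proof.

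The argument is essentially bookkeeping from the definitions and does not require any serious obstacle; the only subtle point is remembering that $\theta_1/\theta_1=1$ contributes nothing to the generation but is consistent with the index counting, and that $l\geq 2$ is exactly what is needed to invoke minimality and the strict inclusion $K_0\subsetneq K$.
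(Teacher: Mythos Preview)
Your proof is correct and follows essentially the same route as the paper's own argument: for the lower bound you use the $\IQ$-linear independence of $1,\theta_2/\theta_1,\ldots,\theta_{l-1}/\theta_1$ to get $[K_0:\IQ]\geq l-1$ and then divide by $m$, and for the upper bound you use the minimality of $l$ to conclude $K_0\subsetneq K$ and hence $[K_0:k]\leq e/2$. The paper's proof is simply a terser version of exactly this reasoning.
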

\begin{proof}
The $l-1$ numbers $\frac{\theta_1}{\theta_1},...,\frac{\theta_{l-1}}{\theta_1}$
are $\IQ$-linearly independent. Hence 
$[K_0:\IQ]\geq l-1$ for $K_0=k(\frac{\theta_1}{\theta_1},...,\frac{\theta_{l-1}}{\theta_1})$. The first inequality follows at once, since $m=[k:\IQ]$.
But the second one follows immediately from the
definition of $l$ since $[K:k]=e$.
\end{proof}
\begin{lemma}\label{lemma2.7}
We have
\begin{alignat*}3
\lambda_1 &\geq \sqrt{d/2}(C_{\en}^{fin})^{-1} N(\D)^{\frac{1}{d}}.
\end{alignat*}
Moreover with $K_0=k(\frac{\theta_1}{\theta_1},...,\frac{\theta_{l-1}}{\theta_1})$ if $l\geq 2$ and $K_0=k$ if $l=1$ and $g=[K_0:k]\in G(K/k)$ one has 
\begin{alignat*}3
\lambda_l &\geq \frac{1}{\sqrt{2}ed}(C_{\en}^{fin})^{-1} N(\D)^{\frac{1}{d}}\delta_g(K/k).
\end{alignat*}
\end{lemma}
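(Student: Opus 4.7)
The plan is to prove the two bounds separately.

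For $\lambda_1$, I take any nonzero $\theta \in \C_0^{-1}\D$ and expand $|\tau\sigma(\theta)|^2 = \sum_{v\mid\infty} \gamma_v^2 |\theta|_v^2$, where $\gamma_v$ is the scaling factor of $\tau$ at the infinite place $v$ in the notation of (\ref{ti}) and (\ref{prodgammai}). Splitting each complex contribution into two equal halves produces $d = r+2s$ positive summands, and AM-GM gives $|\tau\sigma(\theta)|^2 \geq d\cdot 2^{-2s/d}\,(\prod_{v\mid\infty} \gamma_v^{d_v}|\theta|_v^{d_v})^{2/d}$. The $\gamma_v$-product collapses to $1$ by (\ref{prodgammai}); the product formula together with the containment $\theta \in \C_0^{-1}\D$ converts the archimedean product over $|\theta|_v$ into a finite-place product bounded below by $(C^{fin}_\en)^{-d}N\D$, using (\ref{NC0}); and $2s\leq d$ absorbs the $2^{-2s/d}$ factor into $1/2$. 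This yields the stated bound on $|\tau\sigma(\theta)|$ and hence on $\lambda_1$.

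For $\lambda_l$, the case $l=1$ forces $K=k$ by the minimality in Definition \ref{defl}, so $g=1$ and $\delta_g(K/k)=1$, and the bound is implied by the $\lambda_1$ estimate with ample slack. When $l \geq 2$, the strategy is to exhibit a pair realizing the infimum defining $\delta_g(K/k)$. Lemma \ref{lemmaprimele} applied to the degree-$g$ extension $K_0/k$ produces integers $0 \leq m_j < g$ with $k(\alpha) = K_0$ for $\alpha = \sum_{j=1}^{l-1} m_j\theta_j/\theta_1$. Since $K = K_0(\theta_l/\theta_1)$ by Definition \ref{defl}, we have $K = k(\alpha, \theta_l/\theta_1)$ with $[k(\alpha):k]=g$, so $H(\theta_1, \sum m_j\theta_j, \theta_l) = H(1, \alpha, \theta_l/\theta_1) \geq \delta_g(K/k)$ after clearing $\theta_1$.

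To convert this into a lower bound on $\lambda_l$, I estimate the height from above place-by-place. At each infinite $v$, $|\theta_j|_v \leq \lambda_j/\gamma_v \leq \lambda_l/\gamma_v$ for $1 \leq j \leq l$, so the max of the three coordinates is at most $gl\,\lambda_l/\gamma_v$; at each finite $v$, each coordinate lies in $\C_0^{-1}\D$, so the max is at most $|\C_0^{-1}\D|_v$. Taking the weighted product over all $v$: (\ref{prodgammai}) cancels the $\gamma_v$ factors, and (\ref{NC0}) converts the finite part into $C^{fin}_\en\,N(\D)^{-1/d}$, giving $H \leq gl\,\lambda_l\cdot C^{fin}_\en N(\D)^{-1/d}$. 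Combined with the lower bound $\delta_g(K/k)$ and the inequality $gl \leq ed$ (from $g \leq e$ and Lemma \ref{lemma2.2} giving $l \leq [d/2]+1$), this yields the claim, the $\sqrt{2}$ in the stated denominator being slack.

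The main obstacle is the AM-GM step in the $\lambda_1$ bound: one must split each complex contribution into two equal halves before applying AM-GM so that there are $d$ (not $r+s$) summands, which is what produces the correct $\sqrt{d/2}$ factor; otherwise the resulting constant involves $r+s$ and the $N\D^{1/d}$ exponent comes out wrong. The $\lambda_l$ part is then essentially bookkeeping around Lemma \ref{lemmaprimele}, keyed on the observation $|\theta_j|_v \leq \lambda_l/\gamma_v$ for $j \leq l$.
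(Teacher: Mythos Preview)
Your proof is correct. The $\lambda_1$ argument is essentially the paper's: the paper bounds $\sum_i|\gamma_i\sigma_i\alpha|^2\geq\frac{1}{2}\sum_i d_i|\gamma_i\sigma_i\alpha|^2$ and then applies weighted AM--GM, which amounts to your device of splitting each complex term in half before AM--GM.

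Your $\lambda_l$ argument, while following the same overall plan (construct an admissible pair $(\alpha,\beta)$ for $\delta_g(K/k)$ and bound its height from above via the $\theta_j$), is slightly cleaner than the paper's in two respects. The paper applies Lemma~\ref{lemmaprimele} twice, once to $K_0/k$ and once to $K/k$, producing $\alpha=\sum_{j<l}m_j'\theta_j/\theta_1$ and $\beta=\sum_{j\leq l}m_j\theta_j/\theta_1$; you observe that $K=K_0(\theta_l/\theta_1)$ already, so a single application suffices with $\beta=\theta_l/\theta_1$. More substantially, the paper bounds the archimedean part of the height by first passing to $\max_i|\sigma_j\theta_i|$, then applying AM--GM across the places to reach $|(\tau\sigma\theta_1,\ldots,\tau\sigma\theta_l)|\leq\sqrt{l}\,\lambda_l$; you instead use the coordinate-wise inequality $\gamma_v|\sigma_v\theta_j|\leq|\tau\sigma\theta_j|=\lambda_j$ directly and let $\prod_v\gamma_v^{d_v}=1$ cancel the $\gamma$'s in the product. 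This bypasses the second AM--GM entirely and in fact gives a constant better by $\sqrt{2}$ (which you correctly note is slack). Both approaches are short; yours is the more direct one.
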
 
\begin{proof}
For the first statement observe that
by definition  
\begin{alignat*}3
\tau\sigma\alpha=
(\gamma_1\sigma_1\alpha,...,\gamma_{q+1}\sigma_{q+1}\alpha).
\end{alignat*}
So the squared length of an element 
$\tau\sigma\alpha$ of $\g0$ is
\begin{alignat*}3
\sum_{i=1}^{q+1}|\gamma_i\sigma_i\alpha|^2\geq
\frac{1}{2}\sum_{i=1}^{q+1}d_i|\gamma_i\sigma_i\alpha|^2.
\end{alignat*}
Next we use the inequality between the arithmetic and geometric mean to deduce that this is
at least
\begin{alignat*}3
(d/2)\prod_{i=1}^{q+1}|\gamma_i\sigma_i\alpha|^{2d_i/d}.
\end{alignat*}
By (\ref{prodgammai}) we see that the latter is 
$(d/2)\prod_{i=1}^{q+1}|\sigma_i\alpha|^{2d_i/d}$.
Here $\prod_{i=1}^{q+1}|\sigma_i\alpha|^{d_i}$
is the absolute value of the norm of $\alpha$ from
$K$ to $\IQ$ which is at least $N{\C_0}^{-1}\D$
provided $\alpha\neq 0$. Recalling (\ref{NC0}) we
see that $N{\C_0}^{-1}\D=(C_{\en}^{fin})^{-d}N\D$
which leads to the first statement.\\ 

Now let us prove the second estimate.
First note that $l=1$ is equivalent to $K=k$.
Thus $l=1$ implies $k=K$, $g=1$, $\delta_g(K/k)=1$ and so the claim follows from
the first statement. Next suppose $l>1$.
We apply Lemma \ref{lemmaprimele} twice
to obtain a primitive
element $\beta=\sum_{i=1}^{l}m_i\frac{\theta_i}{\theta_1}$
for the extension $K/k$
where $m_i$ are in $\IZ$ and $0\leq m_i<e$ ($1\leq i \leq l$).
And once more to get 
a primitive element $\alpha=\sum_{i=1}^{l-1}m_i'\frac{\theta_i}{\theta_1}$
for the extension 
$k(\frac{\theta_1}{\theta_1},...,\frac{\theta_{l-1}}{\theta_1})/k$
with 
$m_1',...,m_{l-1}'$
in $\IZ$ and $0\leq m_i'<e$ 
($1\leq i \leq l-1$).
So $k(\alpha,\beta)=K$ and $[k(\alpha):k]=g$. Using the product formula we get
\begin{alignat*}3
\delta_g(K/k)^d\leq H(1,\alpha,\beta)^d=
&\prod_{v\nmid \infty}\max\{|\theta_1|_v,|\sum_{i=1}^{l-1}m_i'\theta_i|_v,|\sum_{i=1}^{l}m_i\theta_i|_v\}^{d_v}\\
&\prod_{j=1}^{q+1}\max\{|\sigma_j\theta_1|,
|\sigma_j(\sum_{i=1}^{l-1}m_i'\theta_i)|,|\sigma_j(\sum_{i=1}^{l}m_i\theta_i)|\}^{d_j}.
\end{alignat*}
Because $\theta_1,...,\theta_l$ are in $\C_0^{-1}\D$
this is 
\begin{alignat*}3
\leq N(\C_0^{-1}\D)^{-1}
\prod_{j=1}^{q+1}(le)^{d_j}\max\{|\sigma_j\theta_1|,...,|\sigma_j\theta_l|\}^{d_j},
\end{alignat*}
and since $\prod_{j=1}^{q+1}\gamma_j^{d_j}=1$ this in
turn is
\begin{alignat*}3
&=(le)^d N(\C_0^{-1}\D)^{-1}
\prod_{j=1}^{q+1}\max\{\gamma_j|\sigma_j\theta_1|,.
..,\gamma_j|\sigma_j\theta_l|\}^{d_j}\\
&=(le)^d(C_{\en}^{fin})^d N(\D)^{-1}
\left(\prod_{j=1}^{q+1}\max\{\gamma_j|\sigma_j\theta_1|,.
..,\gamma_j|\sigma_j\theta_l|\}^{2d_j}\right)^{\frac{1}{2}}\\
&=(le)^d(C_{\en}^{fin})^d N(\D)^{-1}
\left(\prod_{j=1}^{q+1}|w_j|_{\infty}^{2d_j}\right)^{\frac{1}{2}}
\end{alignat*}
where $w_j$ is the vector $(\gamma_j\sigma_j\theta_1,.
..,\gamma_j\sigma_j\theta_l)$ in 
$\IR^{l}$ if $j\leq r$ and in $\IC^{l}$ if $j>r$ and $|\cdot|_{\infty}$ denotes the maximum norm.
Now using the inequality between the arithmetic and geometric mean and $|\cdot|\geq |\cdot|_{\infty}$
for the $l^2$-norm $|\cdot|$ we may estimate the above by
\begin{alignat}3
\nonumber&\leq (le)^d (C_{\en}^{fin})^d N(\D)^{-1}
\left(\frac{1}{d}\sum_{j=1}^{q+1}d_j|w_j|^{2}\right)^{\frac{d}{2}}\\
\label{aprip7}&\leq (le)^d (2/d)^{d/2}
(C_{\en}^{fin})^dN(\D)^{-1}
\left(\sum_{j=1}^{q+1}|w_j|^{2}\right)^{\frac{d}{2}}.
\end{alignat}
The vector $(\ta_0\sigma\theta_1,...,\ta_0\sigma\theta_l)$
in $\IR^{ld}$ has squared length exactly
\begin{alignat*}3
\sum_{j=1}^{q+1}|(\gamma_j\sigma_j\theta_1,.
..,\gamma_j\sigma_j\theta_l)|^{2}, 
\end{alignat*}
so that the right-hand side of (\ref{aprip7}) is
\begin{alignat}3
\label{K1}
&=(le)^d (2/d)^{d/2}
(C_{\en}^{fin})^d
N(\D)^{-1}|(\ta_0\sigma\theta_1,...,\ta_0\sigma\theta_l)|^d.
\end{alignat}
Moreover by (\ref{visuccmin}) one has
\begin{alignat}3
\label{K2}
|(\ta_0\sigma\theta_1,...,\ta_0\sigma\theta_l)|= 
(|v_1|^2+...+|v_l|^2)^{\frac{1}{2}}\leq \sqrt{l}\lambda_l.
\end{alignat}
Note that by definition $l\leq d$.
Combining (\ref{K1}) and (\ref{K2}) yields the 
desired result.
\end{proof}

\section{Application of counting}\label{subseccounting}
Recall the partition (\ref{partSF1}) of $S_F(T)$.
In this section we concentrate
on the component $S_{F({\bf 0})}(T)$. We will use Theorem \ref{TWi1} 
to estimate the number of points in 
$\G0\cap S_{F({\bf 0})}(T)$ satisfying a certain
primitivity condition.
Let $S_1\subseteq \sigma K^{n+1}$ and $S_2\subseteq \IR^D$ be
sets with $|S_1\cap S_2|$ or $|\tau S_1\cap S_2|$ finite. We use the following notation 
\begin{alignat}3
\label{defZstar}
Z^*(S_1,S_2)&=|\{\sigma\bom\in S_1\cap S_2;\bom\neq {\bf 0},
k(\omega_0:...:\omega_n)=K\}|\\
\label{defZstartau}
Z_{\tau}^{*}(\tau S_1,S_2)&=|\{\tau\sigma\bom\in \tau S_1\cap S_2;\bom\neq {\bf 0},
k(\omega_0:...:\omega_n)=K\}|.
\end{alignat}
We recall that $\tau$ and 
$\sigma$ are injective. Hence (\ref{defZstar}) and (\ref{defZstartau}) are well-defined and moreover
\begin{alignat}3
\label{ZstarZid1}
Z^*(S_1,S_2)=Z_{\tau}^{*}(\tau S_1,\tau S_2).
\end{alignat}
It might be worth to repeat (\ref{deflami1}) namely
\begin{alignat*}3
\lambda_i=\lambda_i(\g0)
\end{alignat*}
for $1\leq i \leq d$.\\
Recall also definition (\ref{mu2})
\begin{alignat*}3
\mu_g=m(e-g)(n+1)-1.
\end{alignat*}
Inclusion (\ref{SF0supset}) tells us in particular $S_{F({\bf 0})}(T)$ is bounded.\\

First suppose $q>0$.\\
We apply Lemma \ref{lemma2.8} not to $F$ but to  
\begin{alignat*}3
F(\v0)=[0,1)\frac{u_1}{n_1}+...+[0,1)\frac{u_q}{n_q}.
\end{alignat*}
Remember that by (\ref{defnj}) 
\begin{alignat*}3
|\frac{u_j}{n_j}|=\frac{|u_j|}{[|u_j|]+1}<1.
\end{alignat*}
We refer to (\ref{Flpb2}) and the observations just after
to conclude that $\partial F(\v0)$ lies in Lip$(q+1,2,2q,q-1)$. Furthermore
it is clear that $F(\v0)$ lies in a ball of radius 
$r_{F({\bf 0})}=q$. Applying Lemma \ref{lemma2.8} gives
that the boundary 
\begin{alignat}3
\label{boundinlip}
\partial S_{F({\bf 0})}(1)\text{ lies
in Lip}(D,1,\widetilde{M},\widetilde{L})
\end{alignat}
where
\begin{alignat*}3
\widetilde{M}&=(2q+1)\M^{q+1},\\
\widetilde{L}&=3\sqrt{D}(2q)\exp(\sqrt{q}(2q-1))(L+C_{\en}^{inf}).
\end{alignat*}
In the sequel it will sometimes be convenient to use Vinogradov's  $\ll$ notation. 
The implied constant will depend on $n$ and $d$ only. Thus we have
\begin{alignat*}3
\widetilde{M}&\ll \M^{q+1}\leq \M^d,\\
\widetilde{L}&\ll L+C_{\en}^{inf}.
\end{alignat*}

Now suppose $q=0$.\\ 
Therefore we have $S_{F({\bf 0})}(1)=S_{F}(1)$.
Recalling the observation just after Lemma \ref{lemma2.8}
shows directly that (\ref{boundinlip}) holds with $\widetilde{M}=\M \leq \M^d$ and $\widetilde{L}=L \leq L+C_{\en}^{inf}$.\\

By Theorem \ref{TWi1} we deduce
that $S_{F({\bf 0})}(1)$ is measurable.
Since by (\ref{homexp2}) $S_{F({\bf 0})}(T)=TS_{F({\bf 0})}(1)$  we conclude
that the latter remains true for $S_{F({\bf 0})}(T)$.
So the quantities 
$\Vol S_{F({\bf 0})}(T)$ and $|\G0\cap S_{F({\bf 0})}(T)|$ are well-defined and finite. 
\begin{proposition}\label{prop4}
With $A=A_{\en}$ as in Theorem \ref{prop2},
$T>0$ and $g=[K_0:k]$ as in Lemma \ref{lemma2.7} we have
\begin{alignat*}3
|Z_{\tau}^*(\G0,S_{F({\bf 0})}(T))-\frac{\Vol S_{F({\bf 0})}(T)}
{\det \G0}|\ll \frac{A T^{d(n+1)-1}}
{N\D^{n+1-1/d}\delta_{g}(K/k)^{\mu_g}}.
\end{alignat*}
\end{proposition}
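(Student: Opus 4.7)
The plan is to apply Theorem~\ref{TWi1} to the lattice $\Lambda := \tau\Lam(\D)$ and the set $S := S_{F(\v0)}(T)$, then exploit the primitivity condition via Lemmas~\ref{lemma2.5} and~\ref{lemma2.7}. By (\ref{homexp2}) and the discussion preceding (\ref{boundinlip}), the boundary $\partial S$ lies in $\mathrm{Lip}(\Da,1,\widetilde{\M},T\widetilde{L})$ with $\widetilde{\M}\ll\M^d$ and $\widetilde{L}\ll L+C^{inf}_{\en}$. Writing $\nu_1\leq\cdots\leq\nu_{\Da}$ for the successive minima of $\Lambda$, Theorem~\ref{TWi1} yields
\begin{alignat*}3
\left||\Lambda\cap S|-\frac{\Vol S}{\det\Lambda}\right|\ll\M^d\max_{0\leq j<\Da}\frac{(T(L+C^{inf}_{\en}))^j}{\nu_1\cdots\nu_j}.
\end{alignat*}

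Without further input the small $\nu_i$ would ruin this estimate, and here Lemma~\ref{lemma2.5} intervenes: every vector $v=\tau\sigma\bom$ contributing to $Z^*_{\tau}$ satisfies $|v|\geq \lambda_l$, where $\lambda_l:=\lambda_l(\g0)$. I would revisit the proof of Proposition~\ref{PM1} restricting the translates $F_v$ to primitive $v$: the box-counting argument remains valid, but because any lattice vector shorter than $\lambda_l$ is automatically non-primitive, one may upgrade each $\nu_i$ to $\max(\nu_i,\lambda_l)$ when estimating how many primitive $v$ have $F_v$ meeting a given parameterising box; in particular the denominator at $j=\Da-1$ becomes at least $\lambda_l^{\Da-1}$. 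Combined with the trivial inequality $|\,|\Lambda\cap S|-Z^*_{\tau}|\leq 1+(\text{non-primitive count})$, this produces the analogous bound for $|Z^*_{\tau}-\Vol S/\det\Lambda|$.

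Using the lower bound $\lambda_l\gg(C^{fin}_{\en})^{-1}N(\D)^{1/d}\delta_g(K/k)$ from Lemma~\ref{lemma2.7} together with $L+C^{inf}_{\en}\leq C_{\en}(L+1)$, the dominant $j=\Da-1$ term becomes
\begin{alignat*}3
\ll \frac{\M^d(C_{\en}(L+1))^{\Da-1}T^{\Da-1}}{N(\D)^{n+1-1/d}\delta_g(K/k)^{\Da-1}}=\frac{A_{\en}T^{\Da-1}}{N(\D)^{n+1-1/d}\delta_g(K/k)^{\Da-1}},
\end{alignat*}
which is even stronger than the claim, since $\Da-1\geq \mu_g$ and $\delta_g\geq 1$; intermediate $j$ are dominated analogously. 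The residual correction from non-primitive lattice points in $\Lambda\cap S$ is treated directly: each such point lies in $\tau\sigma(K_0^{n+1})$ for some intermediate field $k\subseteq K_0\subsetneq K$ of relative degree at most $g$, hence in a sublattice of rank at most $\m g(n+1)\leq \Da-1$; a direct counting argument, together with an analogue of Lemma~\ref{lemma2.7} applied to the $K_0$-sublattice, bounds this contribution by $\ll T^{\m g(n+1)}/\delta_g(K/k)^{\mu_g}$, absorbed into the stated error. The main obstacle is the first step, namely verifying rigorously that restricting Proposition~\ref{PM1}'s counting to primitive $v$ genuinely upgrades each $\nu_i$ to $\max(\nu_i,\lambda_l)$, and the subsequent uniform bookkeeping over intermediate subfields $K_0$ so that the sublattice counts are absorbed by precisely the exponent $\mu_g=\m(e-g)(n+1)-1$ rather than some cruder denominator.
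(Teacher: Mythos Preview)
Your proposal has two genuine gaps.

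First, the ``upgrade'' of $\nu_i$ to $\max(\nu_i,\lambda_l)$ inside Proposition~\ref{PM1} does not go through. That proof bounds, for each parameterizing cube $\phi(C)$, the number of lattice points $v$ with $F_v\cap\phi(C)\neq\varnothing$ by controlling the \emph{differences} $v-v'$ via Cramer's rule. Even if $v$ and $v'$ are both primitive, $v-v'$ is a lattice vector that need not be primitive and can perfectly well have length $\lambda_1$; so restricting to primitive $v$ gives no improvement on the coordinate ranges $|r_i-r_i'|$. Consequently the denominator $\lambda_l^{\Da-1}$ you claim is unjustified.

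Second, your treatment of the non-primitive points is based on a false premise. If $k(\omega_0:\ldots:\omega_n)=K_0\subsetneq K$ this means the \emph{ratios} $\omega_i/\omega_j$ lie in $K_0$, not the coordinates themselves; the $\omega_i$ remain elements of $\C_0^{-1}\D\subseteq K$ and $\tau\sigma\bom$ does not lie in any sublattice of rank $mg(n+1)$. (Take $K_0=k$, $\omega_0=\theta$ a generator of $K/k$, $\omega_1=2\theta$.) So your sublattice count never starts.

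The paper proceeds quite differently. It first reduces to a bound with $\lambda_1^{(l-1)(n+1)}\lambda_l^{(d-l+1)(n+1)-1}$ in the denominator, then splits into two cases. If $T<\kappa^{-1}\lambda_l$ then Lemma~\ref{lemma2.5} forces $Z_\tau^*=0$, and the volume term is bounded directly using Minkowski's theorem. If $T\geq\kappa^{-1}\lambda_l$ one writes $Z_\tau^*=|S|-|S'|$ with $S=\Lambda\cap S_{F(\v0)}(T)$ and $S'$ the non-primitive points. For $|S|$ one applies Theorem~\ref{TWi1} \emph{as is}, using (\ref{OG}) and Lemma~\ref{minpowlatt} to see the $\nu_j$ come in blocks $\lambda_1,\ldots,\lambda_1,\lambda_2,\ldots$; the case hypothesis $C_\en^{inf}T/\lambda_l\gg 1$ then pushes the maximum to $j=\Da-1$ with the correct mixed power of $\lambda_1$ and $\lambda_l$. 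For $|S'|$ the key observation is that $k(\omega_0:\ldots:\omega_n)\subsetneq K$ forces two embeddings $\sigma_a,\sigma_b$ to agree on all ratios, so for fixed $\omega_0\neq 0$ the remaining $\tau_0\sigma\omega_j$ lie in a \emph{hyperplane} of $\IR^d$; one then counts lattice points in a ball intersected with that hyperplane via Theorem~\ref{TWi1} again. This hyperplane-per-coordinate argument is what produces the exponent $\mu_g$, not a sublattice of small rank.
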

\begin{proof}
Recall that $A=\M^{d}(C(L+1))^{d(n+1)-1}$.
We have 
\begin{alignat*}3
\mu_g=(d-mg)(n+1)-1\leq (d-l+1)(n+1)-1
\end{alignat*}
by Lemma \ref{lemma2.6} provided $l\geq 2$. But if 
$l=1$ then $K=k$ and thus $G(K/k)=\{1\}$, so $g=1$.
Hence for $l=1$ the inequality remains valid.
Thanks to Lemma \ref{lemma2.7} and (\ref{defc})
relating $C=C_{\en}$ and $C_{\en}^{inf}$ it is enough 
to verify the claim
\begin{alignat}3 
\label{claimprop4}
|Z_{\tau}^*(\G0,S_{F({\bf 0})}(T))-\frac{\Vol S_{F({\bf 0})}(T)}
{\det \G0}|\ll \M^{d}\frac{(C_{\en}^{inf}(L+1)T)^{d(n+1)-1}}
{\lambda_1^{(l-1)(n+1)}\lambda_l^{(d-l+1)(n+1)-1}}.
\end{alignat}
Remember also inclusion (\ref{SF0supset})
telling us
\begin{alignat}3
\label{OM2}
S_{F({\bf 0})}(T)\subseteq B_0(\d_1T)
\end{alignat}
where $\d_1=\sqrt{d(n+1)}C^{inf}_{\en}\exp(q)$.\\

We consider two cases.\\

$(1)$ \quad $T<\d_1^{-1}\lambda_l$.\\
\newline
Now (\ref{OM2}) shows that $|v|<\lambda_l$ for each 
$v$ in $S_{F({\bf 0})}(T)$. From (\ref{OG}) we get
$\G0 \subseteq \tau(\sigma(\C_0^{-1}\D)^{n+1})$ and so Lemma 
\ref{lemma2.5} implies 
\begin{alignat*}3
Z_{\tau}^*(\G0,S_{F({\bf 0})}(T))=0. 
\end{alignat*}
On the other hand
\begin{alignat*}3
\frac{\Vol S_{F({\bf 0})}(T)}{\det \G0}
&\leq 
\frac{\Vol B_0(\d_1T)}{\det \tau(\sigma(\C_0^{-1}\D)^{n+1})}.
\end{alignat*}
Since $\det(\Lambda_0^{n+1})=(\det\Lambda_0)^{n+1}$
for any lattice $\Lambda_0$ in $\IR^d$ the latter is
\begin{alignat*}3
= \frac{\Vol B_0(\d_1T)}{\det(\g0)^{n+1}}.
\end{alignat*}
Because of $\Vol B_0(R) \ll R^{d(n+1)}$, Minkowski's Second
Theorem and $(1)$ this in turn is
\begin{alignat*}3
&\ll \frac{(\d_1T)^{d(n+1)}}{\det(\g0)^{n+1}}
&&\ll\frac{(\d_1T)^{d(n+1)}}
{(\lambda_1...\lambda_d)^{n+1}}\\
&\ll\frac{\lambda_l(\d_1T)^{d(n+1)-1}}
{(\lambda_1...\lambda_d)^{n+1}}
&&\ll \frac{(C^{inf}_{\en}T)^{d(n+1)-1}}
{\lambda_1^{(l-1)(n+1)}\lambda_l^{(d-l+1)(n+1)-1}}.
\end{alignat*}
This implies (\ref{claimprop4}) in case $(1)$ because
$\M\geq 1$.\\

$(2)$ \quad $T\geq \d_1^{-1}\lambda_l$.\\
\newline
Thus for $1 \leq i \leq l$ one has 
\begin{alignat}3
\label{(2')}
C^{inf}_{\en}\frac{T}{\lambda_i}\gg 1.
\end{alignat}
Set
\begin{alignat*}3
&S=\G0\cap S_{F({\bf 0})}(T).
\end{alignat*}
Notice that by definition (\ref{inkl1}) $\v0$ is not in $S_{F({\bf 0})}(T)$ for all $T>0$. Thus we can define 
\begin{alignat*}3
&S'=\{v\in S; v=(\ta_0\sigma\omega_0,...,\ta_0\sigma\omega_n),
k(\omega_0:...:\omega_n)\subsetneq K\}.
\end{alignat*}
Clearly
\begin{alignat*}3
Z_{\tau}^*(\G0,S_{F({\bf 0})}(T))=|S|-|S'|.
\end{alignat*}
Let us estimate $|S|$ first.
Due to (\ref{boundinlip}) we know that
$\partial S_{F({\bf 0})}(1)$ lies in Lip$(D,1,\widetilde{\M},\widetilde{L})$
where $\widetilde{\M}\ll M^d$ and $\widetilde{L}\ll L+C_{\en}^{inf}$.
By (\ref{homexp2}) we see that
$\partial S_{F({\bf 0})}(T)$ is in Lip$(D,1,\widetilde{\M},\widetilde{L}T)$.
Next we apply Theorem \ref{TWi1} 
of Section \ref{1subsec3} to deduce
\begin{alignat}3
\nonumber||S|-\frac{\Vol S_{F({\bf 0})}(T)}{\det \G0}|
&\ll \widetilde{\M} \max_{0\leq j\leq d(n+1)-1}
\frac{(\widetilde{L}T)^j}{\lambda_1(\G0)...\lambda_j(\G0)}\\
\label{M1Test1}
&\ll \M^{d}\max_{0\leq j\leq d(n+1)-1}
\frac{((L+C_{\en}^{inf})T)^j}{\lambda_1(\G0)...\lambda_j(\G0)}.
\end{alignat}
From (\ref{OG}) we get
\begin{alignat}3
\label{minbou1}
\lambda_j(\G0)\geq \lambda_j((\g0)^{n+1})
\end{alignat}
for $1\leq j \leq d(n+1)$.
We abbreviate the right-hand side of (\ref{minbou1})
to $\abbmin_j$.
Inserting this estimate in (\ref{M1Test1}) and then using $C_{\en}^{inf}\geq 1$ in the form $L+C_{\en}^{inf}\leq (L+1)C_{\en}^{inf}$ 
yields the bound 
\begin{alignat}3
\label{M1Test2}
&\ll \M^{d}(L+1)^{d(n+1)-1}\max_{0\leq j\leq d(n+1)-1}
\frac{(C_{\en}^{inf}T)^j}
{\abbmin_1...\abbmin_j}.
\end{alignat}
Consider the expressions
\begin{alignat}3
\label{Ej}
E_j=\frac{(C_{\en}^{inf}T)^j}{\abbmin_1...\abbmin_j}
\end{alignat}
in (\ref{M1Test2}).
From Lemma \ref{minpowlatt} we see that 
$\abbmin_1,...,\abbmin_D$ are 
\begin{alignat*}3
\lambda_1,...,\lambda_1,\lambda_2,...,\lambda_2,...,\lambda_d,...,\lambda_d
\end{alignat*}
in blocks of $n+1$. Thus for $j\leq (l-1)(n+1)$ we have
$\abbmin_j\leq \lambda_l$. So in this case 
\begin{alignat}3
\label{Ejineq1}
E_j=E_{j-1}\frac{C_{\en}^{inf}T}{\abbmin_j}\gg E_{j-1}.
\end{alignat}
Therefore the maximum over these $j$ in (\ref{Ej}) is
\begin{alignat}3
\label{Ejineq2}
\ll E_{(l-1)(n+1)}=\frac{(C_{\en}^{inf}T)^{(l-1)(n+1)}}
{(\lambda_1...\lambda_{l-1})^{n+1}}\leq 
\frac{(C_{\en}^{inf}T)^{(l-1)(n+1)}}
{\lambda_1^{(l-1)(n+1)}}.
\end{alignat}
For the other $j>(l-1)(n+1)$ we get $\abbmin_j\geq \lambda_l$ so
\begin{alignat}3
\label{Ejineq3}
E_j\leq E_{j-1}\frac{C_{\en}^{inf}T}{\lambda_l}
\end{alignat}
which contribute an extra 
\begin{alignat*}3
\left(\frac{C_{\en}^{inf}T}{\lambda_l}\right) ^{d(n+1)-1-(l-1)(n+1)}\gg 1
\end{alignat*}
to the maximum in (\ref{Ejineq2}). This yields
the bound 
\begin{alignat}3
\label{M1Test3}
\ll \M^{d}(C^{inf}_{\en}(L+1))^{d(n+1)-1}\frac{T^{d(n+1)-1}}{\lambda_1^{(l-1)(n+1)}\lambda_l^{(d-l+1)(n+1)-1}}
\end{alignat}
for (\ref{M1Test2}).\\

Next we shall obtain an upper bound for $|S'|$.
For $(\ta_0\sigma\omega_0,...,\ta_0\sigma\omega_n)$ in $S'$
the field $k(\omega_0:...:\omega_n)$ lies in a strict subfield, say $K_1$, of $K$.
Hence there exist two different embeddings 
$\sigma_a, \sigma_b$ of $K$ with 
\begin{alignat*}3
\sigma_a\alpha=\sigma_b\alpha
\end{alignat*}
for all $\alpha$ in $K_1$.
Now $(\ta_0\sigma\omega_0,...,\ta_0\sigma\omega_n)\neq \v0$
hence at least one of the numbers $\omega_0,...,\omega_n$ is non-zero.
By symmetry we lose only a factor $n+1$ if we assume 
$\omega_0 \neq 0$. So let us temporarily regard $\omega_0 \neq 0$ as fixed;
then every $\omega_j$ for $1\leq j \leq n$ satisfies
\begin{alignat*}3
\sigma_a\frac{\omega_j}{\omega_0}=\sigma_b\frac{\omega_j}{\omega_0}.
\end{alignat*}
Let $z_0,z_1$ be in $\IR$ with $z_0+iz_1=\frac{\sigma_a\omega_0}{\sigma_b\omega_0}$.
Then we get
\begin{alignat*}3
\Re\sigma_a\omega_j&=
z_0\Re\sigma_b\omega_j-z_1\Im\sigma_b\omega_j,\\
\Im \sigma_a\omega_j&=z_1\Re\sigma_b\omega_j+
z_0\Im\sigma_b\omega_j,
\end{alignat*}
where we used $\Re$ for the real and $\Im$ for the
imaginary part of a complex number.
This shows that all $\sigma \omega_j$ for $1\leq j\leq n$ lie 
in a hyperplane $\P(\omega_0)$ of $\IR^d$ and therefore all $\ta_0\sigma \omega_j$ lie in the hyperplane $\ta_0\P(\omega_0)$. 
The inclusion (\ref{OM2}) implies 
$|\ta_0\sigma\omega_j|\leq \d_1T$.
The intersection of a ball with radius $r$ 
and a hyperplane in $\IR^d$ is a ball in some
$\IR^{d-1}$ with radius $r'\leq r$. It is easy to
see that it belongs to the class Lip$(d,1,1,2\sqrt{d-1}r)$
(for example using (\ref{LP1}) from Appendix with $q=d-1$ and $r_F=\sqrt{d-1}r'$ if the center is at the origin).
Moreover its $d$-dimensional volume is zero. Hence by
Theorem \ref{TWi1} and (\ref{(2')}) we obtain the upper
bound
\begin{alignat*}3
\ll \max_{0\leq i<d}\frac{(\d_1T)^i}{\lambda_1...\lambda_i}
\ll \frac{(C^{inf}_{\en}T)^{d-1}}
{\lambda_1^{l-1}\lambda_l^{d-l}}
\end{alignat*}
for the number of $\ta_0\sigma\omega_j$ 
with $1\leq j\leq n$.\\

Next we have to estimate the number of $\ta_0\sigma\omega_0$. By inclusion (\ref{OM2}) we see once more that $|\ta_0\sigma\omega_0|\leq \d_1T$.
Now by virtue of Theorem \ref{TWi1} we
deduce the following upper bound
\begin{alignat*}3
\ll \frac{\Vol B_0(\d_1T)}{\det \g0}+
\max_{0\leq i < d}\frac{(\d_1T)^i}{\lambda_1...\lambda_i}
\end{alignat*}
for the number of  $\ta_0\sigma\omega_0$.
Going right up to the last minimum, we see that this is bounded by
\begin{alignat*}3
\ll \max_{0\leq i\leq d}\frac{(\d_1T)^i}{\lambda_1...\lambda_i}
\end{alignat*}
and taking (\ref{(2')}) into account yields
the upper bound
\begin{alignat*}3
\ll \frac{(C^{inf}_{\en}T)^d}{\lambda_1^{l-1}\lambda_l^{d-l+1}}.
\end{alignat*}
Multiplying the bounds for the number of $\ta_0\sigma\omega_0$ and 
$\ta_0\sigma\omega_j$ and then summing over all strict subfields $K_1$ of $K$ leads to  
\begin{alignat*}3
|S'|\ll \frac{(C^{inf}_{\en}T)^d}{\lambda_1^{l-1}\lambda_l^{d-l+1}} \left(\frac{(C^{inf}_{\en}T)^{d-1}}{\lambda_1^{l-1}\lambda_l^{d-l}}\right)^{n}=\frac{(C^{inf}_{\en}T)^{d(n+1)-n}}
{\lambda_1^{(l-1)(n+1)}\lambda_l^{(d-l+1)(n+1)-n}}.
\end{alignat*}
We appeal once more to (\ref{(2')}) with $i=l$ to see that 
the latter is
\begin{alignat*}3
\ll \frac{(C^{inf}_{\en}T)^{d(n+1)-1}}
{\lambda_1^{(l-1)(n+1)}\lambda_l^{(d-l+1)(n+1)-1}}.
\end{alignat*}
Combining the estimates for $|S|$ and $|S'|$
proves the claim (\ref{claimprop4}) in case $(2)$, hence the proposition.
\end{proof}

\section{Proof of Theorem 3.1}\label{2endofproof}
Let $\Lamst(\A)$ be the subset of
$\Lam(\A)$ defined by
\begin{alignat*}3
\Lamst(\A)=\{\sigma(\balf); \balf \in K^{n+1}, 
N_v(\sigma_v\balf)=|\A|_v \text{ for all finite }v \}.
\end{alignat*}
Recall also definition (\ref{defZstar}).
As in Section \ref{subseccounting} the star $^*$ indicates
some primitivity condition. However, the 
property defining the set above has nothing to do with
the one in Section \ref{subseccounting}.

\begin{lemma}\label{ZKHX}
For $X>0$ we have
\begin{alignat*}3
Z_{\en}(\IP^n(K/k),X)=
w_K^{-1}\sum_{\A\in R}Z^*(\Lamst(\A),S_{F}(N\A^{\frac{1}{d}}X))
\end{alignat*}
where the sum runs over any system $R$ of ideal class representatives
of $K$.
\end{lemma}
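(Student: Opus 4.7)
The plan is to pass from projective points to carefully chosen integral representatives, one ideal class at a time, and then show that the set $S_F(N\A^{1/d}X)$ cuts the unit-orbit of each representative down to exactly $w_K$ survivors.

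First, for each nonzero $\balf=(\alpha_0,\dots,\alpha_n)\in K^{n+1}$ I would attach a fractional ideal $\A_\balf$ characterized by $|\A_\balf|_v=N_v(\sigma_v\balf)$ at every finite $v$; this is legitimate because condition $(iv)$ in Definition \ref{defALS} forces $N_v(\sigma_v\balf)$ to lie in the value group $\Gamma_v$ and to equal $|\alpha_0|_v$ for all but finitely many $v$, and for a fractional ideal $\A$ one has the product formula $\prod_{v\nmid\infty}|\A|_v^{d_v}=N\A^{-1}$. Thus
\[
\hen^{fin}(\balf)=\prod_{v\nmid\infty}N_v(\sigma_v\balf)^{d_v/d}=N\A_\balf^{-1/d},
\]
and rescaling $\balf\mapsto\lambda\balf$ by $\lambda\in K^*$ turns $\A_\balf$ into $(\lambda)\A_\balf$. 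Hence the ideal class $[\A_\balf]\in\Cl$ depends only on the projective point $P=(\alpha_0:\dots:\alpha_n)$.

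Next I would stratify $\IP^n(K/k)$ by ideal class. Fix a class representative $\A\in R$; every $P\in\IP^n(K/k)$ with $[\A_P]=[\A]$ admits a representative $\balf$ with $\A_\balf=\A$, i.e.\ with $\sigma(\balf)\in\Lamst(\A)$; and any two such representatives differ by $\mu\in K^*$ satisfying $|\mu|_v=1$ for every finite $v$, which forces $\mu\in\Oseen_K^*=\IU$. Using $\hen^{fin}(\balf)=N\A^{-1/d}$, the condition $\hen(P)\le X$ becomes $\hen^{inf}(\balf)\le N\A^{1/d}X$. Translating this through the logarithmic embedding shows that $\sigma(\balf)\in S_F(N\A^{1/d}X)$ holds if and only if the vector $(d_i\log N_i(\sigma_i\balf))_i$ lies in $F+\vdelta(-\infty,\log(N\A^{1/d}X)]$. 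The primitivity condition $k(P)=K$ translates verbatim to $k(\omega_0:\dots:\omega_n)=K$ as in the definition of $Z^*$.

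It remains to count representatives per projective point. For $\mu\in\IU$, replacing $\balf$ by $\mu\balf$ translates the log vector by $l(\mu)\in l(\IU)\subseteq\Sigma$, while fixing the $\vdelta$-component (which is determined by $\hen^{inf}(\balf)$). Since $F$ is chosen as a fundamental domain for $l(\IU)$ in $\Sigma$, each orbit $\IU\cdot\balf$ has exactly one element whose log-projection to $\Sigma$ lies in $F$, up to the kernel of $l$, which is the torsion subgroup of $\IU$ of order $w_K$. Roots of unity fix the log but genuinely permute $\balf$, hence each projective point contributes exactly $w_K$ representatives to $\Lamst(\A)\cap S_F(N\A^{1/d}X)$ satisfying the primitivity condition.

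Summing over $\A\in R$ and dividing by $w_K$ yields the asserted identity. The main obstacle is the bookkeeping in step three: one must be careful that the set $S_F$ really picks out a fundamental domain for $\IU/\text{torsion}$ (and not some other group), and that no $\balf$ in the orbit is double-counted or missed because of boundary effects; the half-open choice $[0,1)$ in the basis expansion of $F$, together with the half-open $(-\infty,\log T]$ in $F(T)$, is exactly what prevents this.
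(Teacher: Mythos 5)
Your proof is correct and follows essentially the same route as the paper: stratify $\IP^n(K/k)$ by ideal class, note that each projective point in a given class has $w_K$ representatives landing in $\Lamst(\A)\cap S_F(N\A^{1/d}X)$, and translate the global height bound into the bound $\hen^{inf}(\balf)\le N\A^{1/d}X$. The only cosmetic difference is that the paper phrases the counting via the fundamental set $F(\infty)=F+\IR\vdelta$ for $l(\IU)$ acting on $\IR^{q+1}$, whereas you split the log vector into its $\Sigma$- and $\vdelta$-components and use $F$ as a fundamental domain in $\Sigma$ directly; these are the same argument.
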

\begin{proof}
Let $P\in \IP^n(K)$ with homogeneous coordinates
$(\alpha_0,...,\alpha_n)=\balf \in K^{n+1}\backslash \{\bf{0}\}$. 
Thanks to the uniqueness of the prime factorization
for non-zero fractional ideals together with property
$N_v(\sigma_v K^{n+1})\subseteq \Gamma_v$, we may conclude
that there is exactly one ideal $\A=\A_{\balf}$ such 
that 
\begin{alignat}3
\label{NvId1}
N_v(\sigma_v\balf)=|\A|_v  
\end{alignat}
for all finite $v$.
Suppose $\varepsilon \in K^*$ then we have
\begin{alignat*}3
N_v(\sigma_v\varepsilon\balf)=|\sigma_v\varepsilon|_v
N_v(\sigma_v\balf)
\end{alignat*}
for all finite $v$.
Hence $\A_{\varepsilon\balf}=\varepsilon\A_{\balf}$; 
in other
words the ideal class of $\A_{\balf}$ is independent
of the coordinates $\balf$ we have chosen.
In particular we can choose $\balf$ such that
$\A_{\balf}$ lies in $R$ and so
$\balf$ is unique up to units $\eta$.
The set $F(\infty)=F+\IR \vdelta$ is
a fundamental set of $\IR^{q+1}$ under the 
action of the additive subgroup $l(\IU)$.
Because of $(ii)$ of 
Section \ref{secALH}
we have  
\begin{alignat*}3
\log N_i(\sigma_i(\eta\balf))^{d_i}
=\log N_i(\sigma_i\balf)^{d_i}+d_i\log|\sigma_i\eta|
\end{alignat*}
for $1\leq i \leq q+1$.
And so there exist exactly $\wK$ representatives
$\balf$ of $P$ with 
\begin{alignat*}3
(d_1\log N_1(\sigma_1\balf),...,d_{q+1}\log N_{q+1}(\sigma_{q+1}\balf))
\in F(\infty).
\end{alignat*}
But the above is equivalent with  
\begin{alignat*}3
(N_1(\sigma_1\balf)^{d_{1}},...,N_{q+1}(\sigma_{q+1}\balf)^{d_{q+1}})
\in \exp(F(\infty)).
\end{alignat*}
Furthermore
\begin{alignat*}3
\exp(F(T_0))=\{(X_1,...,X_{q+1})\in \exp(F(\infty)); 
X_1...X_{q+1}\leq T_0^d\}.
\end{alignat*}
By definition (see end of Section \ref{secALH}) $\hen^{inf}(\balf), \hen^{fin}(\balf)$  are invariant under substitution of $\balf$
by $\omega\balf$ where $\omega$ denotes a root of unity in $K$.
Hence for all $\wK$ possible choices $\balf$ of $P$
the inequality
\begin{alignat*}3
\hen^{inf}(\balf)\leq T_0 
\end{alignat*}
is equivalent to
\begin{alignat*}3
\sigma\balf \in S_F(T_0).
\end{alignat*}
On the other hand 
\begin{alignat*}3
\hen(P)=\hen^{inf}(\balf)\hen^{fin}(\balf)
\end{alignat*}
and by (\ref{NvId1})
\begin{alignat*}3
\hen^{fin}(\balf)^d=\prod_{v\nmid \infty}|\A|_v^{d_v}=N\A^{-1},
\end{alignat*}
which completes the proof.
\end{proof}
 
Let $\Cl$ be the set of ideal classes
and for (non-zero) ideals $\A$, $\B$, $\C$ 
denote by $\mathcal{A}$, $\mathcal{B}$, $\mathcal{C}$ the ideal classes of $\A$,
$\B$ and $\C$.
Recall from (\ref{deltawelldef1}) that the
function $\Delta_{\en}(\cdot)$ is well-defined on $\Cl$.
\begin{lemma}\label{lemmaVfineq2}
We have
\begin{alignat}3
\label{Vfineq2}
\sum_{\A \in R}\sum_{\B}\frac{\mu(\B)}{N\B^{n+1}}
\Delta_{\en}(\mathcal{A}\mathcal{B})^{-1}
=\frac{1}{\zeta_K(n+1)}\sum_{\mathcal{D} \in \Cl}\Delta_{\en}(\mathcal{D})^{-1}
\end{alignat}
where the inner sum on the left-hand side runs over 
all non-zero ideals $\B$ in $\Oseen_K$.
\end{lemma}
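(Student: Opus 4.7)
The plan is to swap the order of summation and then use the familiar Möbius inversion identity $\sum_{\B}\mu(\B) N\B^{-s}=\zeta_K(s)^{-1}$ that follows from the Euler product for $\zeta_K$. The only subtlety is to verify that we can legitimately interchange the two sums, and that ranging $\A$ over $R$ with a fixed $\B$ produces each ideal class exactly once.

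First I would remark on absolute convergence. The outer sum over $\A\in R$ is finite of cardinality $h_K$, and on $\Cl$ the function $\Delta_{\en}(\cdot)^{-1}$ takes only finitely many (bounded) values. Since for $n\ge 1$ we have $n+1\ge 2$, the Dirichlet series $\sum_{\B}|\mu(\B)|N\B^{-(n+1)}\le \zeta_K(n+1)<\infty$, so the full double sum converges absolutely and rearrangement is justified.

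Next, the key bookkeeping step: for each fixed non-zero integral ideal $\B$, as $\A$ runs over the system $R$ of ideal class representatives the class $\mathcal{A}$ runs over all of $\Cl$ exactly once, and hence $\mathcal{A}\mathcal{B}$ also runs over all of $\Cl$ exactly once (since multiplication by $\mathcal{B}$ is a bijection of $\Cl$). Because $\Delta_{\en}(\mathcal{A}\mathcal{B})^{-1}$ depends only on the class $\mathcal{A}\mathcal{B}$ (this is precisely the well-definedness recorded in (\ref{idkl})), we obtain, after swapping the two sums,
\begin{equation*}
\sum_{\A\in R}\sum_{\B}\frac{\mu(\B)}{N\B^{n+1}}\Delta_{\en}(\mathcal{A}\mathcal{B})^{-1}
=\sum_{\B}\frac{\mu(\B)}{N\B^{n+1}}\sum_{\mathcal{D}\in\Cl}\Delta_{\en}(\mathcal{D})^{-1}.
\end{equation*}

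Finally I would invoke the identity $\sum_{\B}\mu(\B)N\B^{-s}=\zeta_K(s)^{-1}$, valid for $\mathrm{Re}\,s>1$ and in particular for $s=n+1\ge 2$; this follows from the Euler product $\zeta_K(s)=\prod_{\pw}(1-N\pw^{-s})^{-1}$ exactly as over $\IQ$. Substituting $s=n+1$ yields the right-hand side of (\ref{Vfineq2}). The main (and only) obstacle is the verification of the class-group bijection, which is a direct consequence of $\Cl$ being a group under ideal class multiplication; everything else is a formal rearrangement.
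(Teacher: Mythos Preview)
Your proposal is correct and follows essentially the same route as the paper: both arguments hinge on the fact that $\Delta_{\en}$ depends only on the ideal class, that multiplication by a fixed class permutes $\Cl$, and that $\sum_{\B}\mu(\B)N\B^{-(n+1)}=\zeta_K(n+1)^{-1}$. The paper first groups the inner sum by the class $\mathcal{C}$ of $\B$ and then substitutes $\mathcal{D}=\mathcal{A}\mathcal{C}$, whereas you swap the two sums outright and invoke the bijection for each fixed $\B$; this is only a cosmetic difference in bookkeeping, and your explicit remark on absolute convergence is a welcome addition.
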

\begin{proof}
We have 
\begin{alignat*}3
\sum_{\A \in R}\sum_{\B}\frac{\mu(\B)}{N\B^{n+1}}
\Delta_{\en}(\mathcal{A}\mathcal{B})^{-1}=
&\sum_{\mathcal{A} \in \Cl}\sum_{\B}\frac{\mu(\B)}{N\B^{n+1}}
\Delta_{\en}(\mathcal{A}\mathcal{B})^{-1}\\
=&\sum_{\mathcal{A} \in \Cl}\sum_{\mathcal{C} \in \Cl}
\Delta_{\en}(\mathcal{A}\mathcal{C})^{-1}
\sum_{\B \in \mathcal{C} }\frac{\mu(\B)}{N\B^{n+1}}\\
=&\sum_{\mathcal{A} \in \Cl}\sum_{\mathcal{D} \in \Cl}
\Delta_{\en}(\mathcal{D})^{-1}
\sum_{\B \in \mathcal{D}/\mathcal{A} }\frac{\mu(\B)}{N\B^{n+1}}\\
=&\sum_{\mathcal{D} \in \Cl}
\Delta_{\en}(\mathcal{D})^{-1}
\sum_{\mathcal{A} \in \Cl} \sum_{\B \in \mathcal{D}/\mathcal{A} }\frac{\mu(\B)}{N\B^{n+1}}\\
=&\sum_{\mathcal{D} \in \Cl}
\Delta_{\en}(\mathcal{D})^{-1}
\sum_{\B}\frac{\mu(\B)}{N\B^{n+1}}
\end{alignat*}
where the last sum is over all non-zero ideals $\B$ in $\Oseen_K$.
Now we just have to remember the fact that 
$\sum_{\B}\frac{\mu(\B)}{N\B^{s}}=\zeta_K(s)^{-1}$
for $s>1$ (so in particular for $s=n+1$)
and the result drops out.
\end{proof}

The image of $\sigma_v(K^{n+1}\backslash\{{\bf 0}\})$ 
under the map $N_v$ lies in $\Gamma_v^*$ and for 
all non-zero $\balf$ in $K^{n+1}$ there are only
finitely many $v$ with $N_v(\sigma_v\balf)\neq 1$.
So assume $\balf$ is in $K^{n+1}\backslash\{{\bf 0}\}$;
then $N_v(\sigma_v\balf)\leq |\A|_v$ for all $v\nmid \infty$ 
is equivalent with the existence of a unique $\B=\B(\balf) \subseteq \Oseen_K$, $\B \neq 0$ such that 
$N_v(\sigma_v\balf)=|\A\B|_v$ for all $v \nmid \infty$.
Hence from (\ref{defLamen}) we have the following disjoint union
\begin{alignat*}3
\Lam(\A)=\bigcup_{\B}\Lamst(\A\B)
\end{alignat*}
and therefore
\begin{alignat*}3
Z^*(\Lam(\A),S_F(T))=
\sum_{\B}Z^*(\Lamst(\A\B),S_F(T))
\end{alignat*}
for any $T>0$.
Using the M\"obius function $\mu_K$ of $K$ we get by inversion
\begin{alignat}3
\label{Moebinv1}
Z^*(\Lamst(\A),S_F(T))=
\sum_{\B}\mu_K(\B)Z^*(\Lam(\A\B),S_F(T)).
\end{alignat}
Applying (\ref{partSF1}) we find 
\begin{alignat}3
\nonumber Z^*(\Lam(\A\B),S_F(T))
&=\sum_{\i}Z^*(\Lam(\A\B),S_{F({\bf i})}(T))
\end{alignat}
where $\i$ is taken over the same set as in (\ref{partSF1}).
Referring to (\ref{ZstarZid1}) we see that the latter is 
\begin{alignat}3
\nonumber &=\sum_{\i}Z_{\ti}^*(\ti\Lam(\A\B),\ti S_{F({\bf i})}(T))
\end{alignat}
and by (\ref{tr1}) this in turn is 
\begin{alignat}3
\nonumber &=\sum_{\i}Z_{\ti}^*(\ti\Lam(\A\B),S_{F({\bf 0})}(T)).
\end{alignat}
Thus 
\begin{alignat}3
\label{ZZtaui}
Z^*(\Lam(\A\B),S_F(T))
=\sum_{\i}Z_{\ti}^*(\ti\Lam(\A\B),S_{F({\bf 0})}(T))
\end{alignat}
and again $\i$ is taken over the same set as in (\ref{partSF1}).
Next we apply Proposition \ref{prop4}
with $\D=\A\B$. To emphasize the dependence on $\i$ and $\A\B$ we can think of $g=g(\i,\A\B)$. We get
\begin{alignat*}3
Z_{\ti}^*(\ti\Lam(\A\B),S_{F({\bf 0})}(T))=
\frac{\Vol S_{F({\bf 0})}(T)}{\det \ti\Lam(\A\B)}
 +O\left(\frac{A T^{d(n+1)-1}}
{(N\A\B)^{n+1-1/d}\delta_{g}(K/k)^{\mu_{g}}}\right).
\end{alignat*}
By (\ref{dettaui}) we have
$\det \ti\Lam(\A\B)=\det \Lam(\A\B)$
and taking also into account (\ref{tr1}) and (\ref{partSF1}) gives
\begin{alignat*}3
\sum_{\i}\Vol S_{F({\bf 0})}(T)=\sum_{\i}\Vol \ti S_{F(\i)}(T)=\sum_{\i}\Vol S_{F(\i)}(T)=\Vol S_{F}(T).
\end{alignat*}
Referring back to (\ref{ZZtaui}) we conclude
\begin{alignat}3
\label{Absch1}
\nonumber Z^*(\Lam(\A\B),S_F(T))&=\sum_{\i}Z_{\ti}^*(\ti\Lam(\A\B),S_{F({\bf 0})}(T))\\
&=\frac{\Vol S_F(T)}{\det \Lam(\A\B)}
 +O\left(\frac{A T^{d(n+1)-1}}
{(N\A\B)^{n+1-1/d}}\sum_{\i}\delta_{g}(K/k)^{-\mu_{g}}\right).
\end{alignat}
Let us focus on the error term.
Recall that $g=g(\i,\A\B)=[K_0:k] \in G=G(K/k)$ where $K_0=k(\theta_1/\theta_1,...,\theta_{l-1}/\theta_1)$
if $l\geq 2$ and $K_0=k$ if $l=1$.
Thus the $\sum_{\i}$ above can be replaced by $t\sum_{g\in G}$
with $t=\sum_{\i}1$.
By (\ref{tAbsch}) we have 
\begin{alignat*}3
t&\ll R_K
\end{alignat*}
and (\ref{homexp1}) says
\begin{alignat*}3
S_F(T)&=TS_F(1).
\end{alignat*}
Thus by (\ref{Moebinv1}) we get
\begin{alignat}3
\label{Moebinv2}
Z^*(\Lamst(\A),S_F(T))=&\sum_{\B}\mu_K(\B)\frac{\Vol S_F(1)T^{d(n+1)}}{\det \Lam(\A\B)}\\
\nonumber+&O\left(\sum_{\B}\frac{AR_KT^{d(n+1)-1}}
{(N\A\B)^{n+1-1/d}}\sum_{g\in G}\delta_{g}(K/k)^{-\mu_{g}}\right).
\end{alignat}
According to Lemma \ref{ZKHX} we set 
\begin{alignat*}3
T=T(\A)=N\A^{\frac{1}{d}}X.
\end{alignat*}
By (\ref{idkl}) we see that 
\begin{alignat*}3
\det \Lam(\A\B)=\Delta_{\en}(\mathcal{A}\mathcal{B})(N\A\B)^{n+1}
\end{alignat*}
for the corresponding ideal classes $\mathcal{A}, \mathcal{B}$.
Therefore (\ref{Moebinv2}) with $T=N\A^{\frac{1}{d}}X$ is equal 
\begin{alignat*}3
&\sum_{\B}\frac{\mu_K(\B)}{N\B^{n+1}}\Delta_{\en}(\mathcal{A}\mathcal{B})^{-1}\Vol S_F(1)X^{d(n+1)}\\
+
&O\left(\sum_{\B}\frac{AR_KX^{d(n+1)-1}}
{N\B^{n+1-1/d}}\sum_{g\in G}\delta_{g}(K/k)^{-\mu_{g}}\right).
\end{alignat*}
Lemma \ref{ZKHX} tells us that this quantity has to
be summed over a set $R$ of ideal class representatives 
$\A$ and divided by the number $\wK$ of roots of unity. Applying Lemma \ref{lemmaVfineq2} yields
\begin{alignat*}3
Z_{\en}(\IP^n(K/k),X)&=\frac{1}{\zeta_K(n+1)\wK}
\sum_{\mathcal{D}\in \Cl}\Delta_{\en}(\mathcal{D})^{-1}\Vol S_F(1)X^{d(n+1)}\\
&+O\left(\sum_{\B}\frac{Ah_KR_KX^{d(n+1)-1}}
{N\B^{n+1-1/d}}\sum_{g\in G}\delta_{g}(K/k)^{-\mu_{g}}\right).
\end{alignat*}
By (\ref{defVfin1}) we have 
\begin{alignat*}3
\sum_{\mathcal{D} \in \Cl}\Delta_{\en}(\mathcal{D})^{-1}=
2^{s_K(n+1)}h_K V_{\en}^{fin}|\Delta_K|^{-\frac{n+1}{2}}.
\end{alignat*}
The volume of $S_F(1)$ has been computed
by Masser and Vaaler in \cite{1} Lemma 4 
\begin{alignat*}3
\Vol S_F(1)=(n+1)^q R_K V_{\en}^{inf}.
\end{alignat*}
On recalling that $V_{\en}=V_{\en}^{fin}V_{\en}^{inf}$
we end up with
\begin{alignat*}3
&\frac{1}{\zeta_K(n+1)\wK}2^{s_K(n+1)}h_K V_{\en}^{fin}|\Delta_K|^{-\frac{n+1}{2}}(n+1)^q R_K V_{\en}^{inf}X^{d(n+1)}\\
=&S_K(n)2^{-r_K(n+1)}\pi^{-s_K(n+1)}V_{\en}X^{d(n+1)}
\end{alignat*}
for the main term - exactly the main term of the theorem.\\

To deal with the error term we assume first 
$(n,d)\neq(1,1)$. 
It is well-known that $\zeta_{K}(x)\leq \zeta_{\IQ}(x)^d$ for $x>1$
(see Lang \cite{13} p.322).
Thus we have
\begin{alignat*}3
\sum_{\B}N\B^{-(n+1-1/d)}\ll 1
\end{alignat*}
and so we are done. 
Next assume $(n,d)=(1,1)$ so $k=K=\IQ$, $q=0$
and therefore
$S_{F({\bf 0})}(T)=S_{F}(T)$.
By (\ref{SF0supset}) we have $S_{F}(T)\subseteq B_0(\kappa T)$
and here $\kappa =\sqrt{2}C_{\en}^{inf}$.
From Lemma \ref{lemma2.7} we get $\lambda_1\geq (1/\sqrt{2})(C_{\en}^{fin})^{-1}N\D$. It follows without difficulty that $B_0(\kappa T)$
contains no point of the lattice $(\sigma \C_0^{-1}\D)^2$
except the origin provided $T<(1/2)C_{\en}^{-1}N\D$.
But the origin does not lie in $S_F(T)$ and on recalling
the inclusion (\ref{OG}) we deduce $S_F(T)\cap \Lamen(\D)$
is empty for $T<(1/2)C_{\en}^{-1}N\D$. Hence we may restrict
the sum over $\B$ in (\ref{Moebinv1}) to 
$N\B\leq 2C_{\en}TN\A^{-1}$. Thus by
(\ref{Moebinv1})
\begin{alignat*}3
Z^*(\Lamst(\A),S_F(T))=
\sum_{\B \atop N\B\leq 2C_{\en}TN\A^{-1}}\mu_K(\B)Z^*(\Lam(\A\B),S_F(T))
\end{alignat*}
and by (\ref{Moebinv2}) we get for the latter  
\begin{alignat*}3
\sum_{\B \atop N\B\leq 2C_{\en}TN\A^{-1}}\mu_K(\B)\frac{\Vol S_F(1)T^{2}}{\det \Lam(\A\B)}+
O\left(\sum_{\B \atop N\B\leq 2C_{\en}TN\A^{-1}}\frac{AR_KT}
{N\A\B}\sum_{g\in G}\delta_{g}(K/k)^{-\mu_{g}}\right).
\end{alignat*}
Here $G=\{1\}$ and $\delta_{g}=1$.
Now in order to get the main term as in the 
case $(n,d)\neq (1,1)$ we let the sum run over all non-zero $\B$
in $\Oseen_K$ and correct by an additional error term
\begin{alignat*}3
\sum_{\B}\mu_K(\B)\frac{\Vol S_F(1)T^{2}}{\det \Lam(\A\B)}
+&O\left(\sum_{\B \atop N\B>2C_{\en}TN\A^{-1}}\frac{\Vol S_F(1)T^{2}}{\det \Lam(\A\B)}\right)\\
+&O\left(\sum_{\B \atop N\B\leq 2C_{\en}TN\A^{-1}}\frac{AR_KT}
{N\A\B}\right).
\end{alignat*}
We set $T=XN\A$ and by Lemma \ref{ZKHX} we see that this 
quantity has to
be summed over a set $R$ of ideal class representatives 
$\A$ and divided by the number $\wK$ of roots of unity.
But here $K=\IQ$ so $R$ consists just of a single class, $\wK=2$ and $R_K=1$. 
Thus
\begin{alignat*}3
Z_{\en}(\IP^n(K/k),X)=&2^{-1}\sum_{\B}\mu_K(\B)\frac{\Vol S_F(1)(XN\A)^{2}}{\det \Lam(\A\B)}\\
+&O\left(\sum_{\B \atop N\B>2C_{\en}X}\frac{\Vol S_F(1)(XN\A)^{2}}{\det \Lam(\A\B)}\right)
+O\left(\sum_{\B \atop N\B\leq 2C_{\en}X}\frac{AX}
{N\B}\right).
\end{alignat*}
As in the previous case the first term leads exactly to
the predicted main term. 
For the first error term we appeal once more to
(\ref{SF0supset}) to get 
$\Vol S_F(1)\ll(C_{\en}^{inf})^2$.
Using inclusion (\ref{OG}) we get
$\Lamen(\A\B)\subseteq (\sigma\C_0^{-1}\A\B)^2$
and therefore 
\begin{alignat*}3
\det \Lamen(\A\B)\geq \det (\sigma\C_0^{-1}\A\B)^2=(C_{\en}^{fin})^{-2}(N\A N\B)^2.
\end{alignat*}
So the first error term is reduced to
\begin{alignat*}3
C_{\en}^2 X^2\sum_{\B \atop N\B>2C_{\en}X}N\B^{-2}
\end{alignat*}
and so is  
\begin{alignat*}3
O(C_{\en}X)=O(AX\L).
\end{alignat*}
The second error term is even easier; namely
\begin{alignat*}3
\sum_{\B \atop N\B\leq 2C_{\en}X}\frac{AX}
{N\B}\leq AX\max\{0,1+\log(2C_{\en}X)\}=O(AX\L).
\end{alignat*}
This completes the proof of Theorem \ref{prop3}.

\appendix
\section{Proof of Lemma 7.1.}
Using the notation of Section 6 let us first recall the statement of the lemma.
\begin{lemma}
Suppose $q\geq 1$ and let $F$ be a set in $\Sigma$ such that
$\partial F$ is in Lip$(q+1,2,\M',L')$ and moreover assume 
$F$ lies in $B_0(r_F)$. Then 
$\partial S_{F}(1)$ is in Lip$(D,1,\widetilde{\M},\widetilde{L})$
where one can choose
\begin{alignat*}3
\widetilde{\M}&=(\M'+1)\M^{q+1}\\
\widetilde{L}&=3\sqrt{D}(L'+r_F+1)
\exp(\sqrt{q}(L'+r_F))(L+C_{\en}^{inf}).
\end{alignat*}
\end{lemma}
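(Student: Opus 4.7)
The strategy is to cover $\partial S_F(1)$ by explicit Lipschitz parametrizations arising from the two ``faces'' of the half-cylinder $F(1) = F + \vdelta(-\infty, 0] \subset \IR^{q+1}$. Concretely, the topological boundary of $F(1)$ decomposes as $(\partial F + \vdelta(-\infty, 0]) \cup F$, and pulling back via $\vz \mapsto (d_i \log N_i(\vz_i))_i$ shows that $\partial S_F(1)$ splits analogously into a \emph{side} piece and a \emph{top} piece; any additional boundary behavior at $\vz_i = 0$ collapses to the origin, which will be absorbed into the $s=0$ slice of the side parametrization.

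For the side piece, given one of the $\M'$ supplied parametrizations $\psi : [0,1]^{q-1} \to \IR^{q+1}$ of $\partial F$ and, for each $i$, one of the $\M_{v_i}$ maps $\phi_i : [0,1]^{d_i(n+1)-1} \to \IR^{d_i(n+1)}$ parametrizing the unit sphere $\{N_{v_i}(\cdot) = 1\}$ from condition $(iii)$ of Definition \ref{defALS}, I introduce the substitution $s = \exp(t) \in [0,1]$ for the $\vdelta$-direction and define
\[
\Phi(\xi, \vx_1, \ldots, \vx_{q+1}, s) = \bigl(s \exp(f_i(\xi)/d_i)\,\phi_i(\vx_i)\bigr)_{i=1}^{q+1},
\]
where $(f_1, \ldots, f_{q+1}) = \psi(\xi)$. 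By property $(ii)$ of Definition \ref{defALS} the $i$th block satisfies $N_{v_i} = s \exp(f_i/d_i)$, so the image lies on the side piece of $\partial S_F(1)$; the input dimension is $(q-1) + (D-q-1) + 1 = D-1$ as required. For the top piece I reuse the same recipe with $s = 1$ fixed, but parametrize $F$ itself by a single affine map from $[0,1]^q$ via the projection $\Sigma \cong \IR^q$ together with a rescaling onto $[-r_F, r_F]^q$, whose Jacobian has operator norm at most $2 r_F \sqrt{q+1}$. Counting combinations yields $\M' \prod_i \M_{v_i} + \prod_i \M_{v_i} \leq (\M'+1)\M^{q+1} = \widetilde{\M}$ maps in total.

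To verify the Lipschitz constant I apply the triangle inequality componentwise to $\Phi$, using: (i) $|\phi_i(\vx_i)| \leq \sqrt{d_i(n+1)}\,C_{\en}^{inf}$ from (\ref{Nineq1}) applied to a unit-sphere point; (ii) $|f_i(\xi)| \leq r_F + L'\sqrt{q-1}$, since $\psi$ covers $\partial F \subset B_0(r_F)$ and is $L'$-Lipschitz on a cube of diameter $\sqrt{q-1}$; and hence the mean value bound $|\exp(f_i/d_i) - \exp(f_i'/d_i)| \leq \exp(r_F + L'\sqrt{q-1})\,L'|\xi-\xi'|/d_i$. Splitting $|\Phi(y) - \Phi(y')|$ into its $s$-, $\xi$-, and $\vx_i$-contributions, squaring, summing over $i$, and using $(n+1)(q+1) \leq D$ together with the inflation $r_F + L'\sqrt{q-1} \leq \sqrt{q}(L' + r_F)$ valid for $q \geq 1$, yields the target $\widetilde{L}$ after combining constants through an elementary inequality of the form
\[
(L')^2 (C_{\en}^{inf})^2 + L^2 + (C_{\en}^{inf})^2 \leq [(L'+r_F+1)(L + C_{\en}^{inf})]^2.
\]

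The principal obstacle is the careful bookkeeping of the exponential factor $\exp(\sqrt{q}(L' + r_F))$: it appears because $\psi$'s image can extend beyond $B_0(r_F)$ by as much as $L'\sqrt{q-1}$, forcing $\exp(f_i/d_i)$ to be bounded uniformly over the enlarged domain; this same inflation recurs in the derivative bound for $a_i(\xi) = \exp(f_i(\xi)/d_i)$ and in the norm bound for $|\phi_i|$. The top-piece analysis is parallel but requires the independent verification that the Jacobian factor $2 r_F \sqrt{q+1}$ from the affine projection is absorbed into the shared $\widetilde{L}$, which follows from $2 r_F \sqrt{q+1} \ll (L' + r_F + 1) \exp(\sqrt{q}(L' + r_F))$.
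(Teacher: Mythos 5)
Your parametrization is the same as the paper's: decompose $\partial(F(1))$ into the ``side'' $(\partial F)+(-\infty,0]\vdelta$ and the ``top'' $\overline F$, re-parametrize the unbounded $\vdelta$-direction by $s = e^t\in[0,1]$ (which also catches the origin), and build the maps for $\partial S_F(1)$ as blockwise products $s\,e^{\psi_i(\xi)/d_i}\eta_i(\vx_i)$. The count $(\M'+1)\M^{q+1}$ and the handling of $e^{\psi_i/d_i}$ via $\|\psi_i\|_\infty\leq r_F+L'\sqrt{q-1}\leq\sqrt q(L'+r_F)$ are also exactly as in the paper.

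There is, however, a genuine gap in the verification of $\widetilde L$. You bound $\|\phi_i\|_\infty\leq\sqrt{d_i(n+1)}\,C_{\en}^{inf}$ by ``applying (\ref{Nineq1}) to a unit-sphere point.'' But Definition~\ref{defALS}(iii) only requires the images of the maps $\phi_i$ (the paper's $\eta_i$) to \emph{cover} the set $\{N_v(\vz)=1\}$; nothing forces the image to lie on, or even near, that set. All one can say is that the image meets the sphere in at least one point $Q$ (else the map is vacuous), whence $|Q|\leq\sqrt{n+1}\,C_{\en}^{inf}$, and then by the Lipschitz condition and the diameter of $[0,1]^{d_i(n+1)-1}$,
\[
\|\phi_i\|_\infty\leq \sqrt{d_i(n+1)-1}\,L+\sqrt{n+1}\,C_{\en}^{inf}\leq\sqrt{d_i(n+1)}\,(L+C_{\en}^{inf}).
\]
Your own treatment of $\psi$ makes exactly this correction (you add $L'\sqrt{q-1}$ to $r_F$), so the omission for $\phi_i$ is an inconsistency, not merely a loose constant. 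The practical effect: your ``elementary inequality'' should have $(L+C_{\en}^{inf})^2$ in place of $(C_{\en}^{inf})^2$ in the two terms coming from the $s$- and $\xi$-derivatives, i.e.\ it should read
\[
(L')^2(L+C_{\en}^{inf})^2+L^2+(L+C_{\en}^{inf})^2\leq 9\bigl[(L'+r_F+1)(L+C_{\en}^{inf})\bigr]^2,
\]
which still holds (using $(L'+2)\leq 2(L'+1)\leq 2(L'+r_F+1)$), so the stated $\widetilde L$ is reachable — but not by the inequality as you wrote it. The paper avoids this pitfall by proving a clean product-Lipschitz assertion (its item $(3)$) and feeding in the corrected sup-norm bound $\|\eta_i\|_\infty\leq\sqrt{d_i(n+1)}(L+C_{\en}^{inf})$ explicitly. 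You should do the same.
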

\begin{proof}
For $1\leq i\leq \M'$ let
\begin{alignat*}3
\psi^{(i)}:[0,1]^{q-1}\longrightarrow \IR^{q+1}
\end{alignat*}
be the parameterizing maps of $\partial F$ with Lipschitz
constants $L'$. 
Choose an orthonormal basis $e_1,...,e_q$ of $\Sigma$.
The affine map $\nu:[0,1]^q\longrightarrow \Sigma$ defined by
\begin{alignat}3 
\label{LP1}
\nu({\bf t})=(1-2t_1)r_F e_1+...+(1-2t_q)r_F e_q
\end{alignat}
is a  Lipschitz parameterization covering the topological closure
$\overline{F}$ with Lipschitz constant $2r_F$.
Since $\vdelta$ is not in $\Sigma$ the boundary
$\partial F(1)$ consists of two parts
\begin{alignat*}3
\partial(F(1))=(\partial(F)+(-\infty,0]\vdelta)\cup \overline{F}.
\end{alignat*}
So we see that $\partial(F(1))$ is parameterized by $\M'+1$ maps.
Here the parameter domain is not compact anymore but
this problem can easily be eliminated as we shall see in a moment.
Since $F$ is bounded we may use (\ref{vecsum1}) to get
\begin{alignat}3
\nonumber
\partial(\exp(F(1)))&=\exp(\partial(F(1)))\cup\{{\bf 0}\}\\
\label{paexF1}&=\exp(\partial(F)+(-\infty,0]\vdelta)\cup \exp(\overline{F})\cup \{{\bf 0}\}.
\end{alignat}
With a $\psi=(\psi_1,...,\psi_{q+1})=\psi^{(i)}$
as above, the first part is covered by
\begin{alignat}3
\label{1caseFI}
\Phi=\exp(\psi+t\vdelta)
=(e^{\psi_1+td_1},...,e^{\psi_{q+1}+td_{q+1}})
=(e^{\psi_1}u^{d_1},...,e^{\psi_{q+1}}u^{d_{q+1}})
\end{alignat}
with parameter domain $[0,1]^{q-1}\times (-\infty,0]$ and
$u=e^t$ in $(0,1]$.
Now we simply choose $u$ as parameter instead of $t$ and
extend its parameter range from $(0,1]$ to $[0,1]$ to cover
the origin. The remaining part of (\ref{paexF1}) is covered by
\begin{alignat}3
\label{2caseFI}
\Phi=\exp(\nu).
\end{alignat}
We use ${\bf t}$ for the parameter variables in
$[0,1]^{q}$, not just for (\ref{2caseFI}) as in
(\ref{LP1}) but also for (\ref{1caseFI}). 
So until now we have $\M'+1$ maps. We denote them
by $\Phi^{(i)}$ for $1\leq i \leq \M'+1$ or more simply 
$\Phi$. The $N_i$ are continuous functions and therefore
$\partial S_F(1)$ consists of these $({\vz}_1,...,{\vz}_{q+1})$ 
in $\prod_{i=1}^{q+1}\IR^{d_i(n+1)}=\IR^{d(n+1)}$ such that
\begin{alignat*}3
(N_1({\vz}_1)^{d_1},...,N_{q+1}({\vz}_{q+1})^{d_{q+1}}) \in \partial(\exp(F(1))).
\end{alignat*}
By our assumptions on $\en$ there are maps
\begin{alignat}3
\label{etai1}
\eta_i^{(j)}:[0,1]^{d_i(n+1)-1}\longrightarrow \IR^{d_i(n+1)}
\end{alignat}
for $1\leq i \leq q+1$ and $1\leq j \leq \M$ satisfying
a Lipschitz condition and whose
images cover the sets
\begin{alignat}3
\label{bouB1}
\{{\vz}\in \IR^{d_i(n+1)};N_i({\vz})=1\}.
\end{alignat}
We write more simply $\eta_i$.
For real $\zeta\geq 0$ the images of $\zeta\eta_i$
cover the sets $\{{\vz}\in \IR^{d_i(n+1)};N_i({\vz})=\zeta\}$ and with $\Phi=(\Phi_1,...,\Phi_{q+1})$ we obtain a parameterization of $\partial S_F(1)$ by maps
\begin{alignat}3
\label{paparSF}
(\Phi_1({\bf t})^{\frac{1}{d_1}}\eta_1({\bf \underline{t}^{(1)}}),...,
\Phi_{q+1}({\bf t})^{\frac{1}{d_{q+1}}}\eta_{q+1}
({\bf \underline{t}^{(q+1)}})).
\end{alignat}
We have  $\M'+1$ possibilities for $\Phi$
and $\M$ possibilities for each $\eta_i$. Hence the total
number of parameterization maps is $(\M'+1)\M^{q+1}$ and
the number of parameters is $q+\sum_{i=1}^{q+1}(d_i(n+1)-1)=d(n+1)-1=D-1$
as desired.\\

To verify the Lipschitz conditions and to compute 
a Lipschitz constant we make use of the following
assertions.\\
\newline
\begin{tabular}{rll}
$(1)$ &Suppose $f_i:[0,1]^{D-1}\longrightarrow \IR^{n_i}$ have Lipschitz constants $L_{i}$ $(1\leq i\leq q+1)$.\\
&Then $f=(f_1,...,f_{q+1}): [0,1]^{D-1}\longrightarrow \IR^{n_1+...+n_{q+1}}$ has a Lipschitz constant\\ &$\sqrt{L_{1}^2+...+L_{q+1}^2}$.\\ 
\newline
$(2)$ &Suppose $f:[0,1]^{E-1}\longrightarrow \IR^n$ has a Lipschitz constant $L$. Then for any\\ 
&$D>E$ the function $f':[0,1]^{D-1}\longrightarrow \IR^n$ defined by $f'({\bf x},{\bf x'})=f({\bf x})$  also\\ 
&has a Lipschitz constant $L$.\\
\newline
$(3)$ &Assume $f:[0,1]^E\longrightarrow \IR$, $f':[0,1]^{E'}\longrightarrow \IR^n$ are functions with Lipschitz\\ &constants $L, L'$.
          Then $\sqrt{2}\max\{\|f'\|_{\infty}L, \|f\|_{\infty}L'\}$ is a Lipschitz constant\\ 
&of the function
        $g:[0,1]^{E+E'}\longrightarrow \IR^n$ defined by $g({\bf x},{\bf x'})=f({\bf x})f'({\bf x'})$,\\ 
&where $\|f\|_{\infty}=\sup|f|, \|f'\|_{\infty}=\sup|f'|$ 
for the euclidean norms
$|f|, |f'|$.
\end{tabular}\\
\newline
Here
$(1)$ and $(2)$ are clear. To prove $(3)$
we write $f'=(f_1',...,f_n')$ so that 
\begin{alignat*}3
|g({\bf x},{\bf x'})-g({\bf y},{\bf y'})|^2=
\sum_{i=1}^{n}(f({\bf x})f_i'({\bf x'})-f({\bf y})f_i'({\bf y'}))^2
\end{alignat*}
which because of 
\begin{alignat*}3
(aa'-bb')^2=(a'(a-b)+b(a'-b'))^2\leq 2(a'^2(a-b)^2+b^2(a'-b')^2)
\end{alignat*}
is at most
\begin{alignat*}3
&2\sum_{i=1}^{n}(f_i'({\bf x'})^2(f({\bf x})-f({\bf y}))^2+
(f({\bf y})^2(f'_i({\bf x'})-f'_i({\bf y'}))^2)\\
\leq 
&2(\|f'\|_{\infty}^2L^2|{\bf x}-{\bf y}|^2+\|f\|_{\infty}^2L'^2|{\bf x'}-{\bf y'}|^2).
\end{alignat*}
Now $(3)$ follows because the squared distance between 
$({\bf x}, {\bf x'})$ and $({\bf y}, {\bf y'})$ 
is $|{\bf x}-{\bf y}|^2+|{\bf x'}-{\bf y'}|^2$.\\

Back to (\ref{paparSF}).
First we will apply $(3)$ to compute Lipschitz constants of 
the single components in (\ref{paparSF}) and then we will make use of 
$(2)$ and $(1)$ to establish the final Lipschitz constant.
According to (\ref{1caseFI}) and (\ref{2caseFI})
respectively two cases for $\Phi$ may arise. 
For the first case we have
\begin{alignat}3
\label{supno1caseFI}
\|\Phi_i^{\frac{1}{d_i}}\|_{\infty}=\|e^{\frac{\psi_i}{d_i}}u\|_{\infty}
\leq \|e^{\frac{\psi_i}{d_i}}\|_{\infty}\leq e^{\|\frac{\psi_i}{d_i}\|_{\infty}}=E_i,
\end{alignat}
say.
We may assume that the image $Im \psi$ of $\psi$ meets $\partial F$ in a point $P$ 
(for if not then we can omit $\psi$) and so by assumption
$|P|\leq r_F$. Let $P'$ be an arbitrary point in $Im \psi$. Using the Lipschitz condition 
and the triangle inequality yields $|P'|\leq r_F+\sqrt{q-1}L'$
and therefore
\begin{alignat}3
\label{supnopsi1}
\|\psi_i\|_{\infty}\leq \sqrt{q-1}L'+r_F.
\end{alignat}
If we plug this in (\ref{supno1caseFI}) we obtain 
\begin{alignat}3
\nonumber\|\Phi_i^{\frac{1}{d_i}}\|_{\infty}\leq E_i &\leq \exp\left(\frac{\sqrt{q-1}L'}{d_i}+\frac{r_F}{d_i}\right)\\
\label{estFI}
&\leq \exp\left(\frac{\sqrt{q}}{d_i}(L'+r_F)\right).
\end{alignat}
Now notice that $\|\nu\|_{\infty}= \sqrt{q}r_F$ and therefore 
$\|\exp(\nu/d_i)\|_{\infty}\leq \exp(\sqrt{q}r_F/d_i)$.
This shows that the estimate (\ref{estFI}) holds 
also in the second case (\ref{2caseFI}).\\

Next let us compute a Lipschitz constant $L_i$ of $\Phi_i^{\frac{1}{d_i}}$.
We proceed by distinguishing the cases (\ref{1caseFI})
and (\ref{2caseFI}).
For the first case we observe that
$1$ is a Lipschitz constant of $f=u$ and furthermore
$\|u\|_{\infty}=1$. Also for $f'=e^{\frac{\psi_i}{d_i}}$
we have $\|f'\|_{\infty}\leq E_i$,
and the Mean Value Theorem leads to a Lipschitz constant 
for $f'$ of the form $E_iL'/d_i$. So by $(3)$ we get a Lipschitz constant for ${\Phi_i}^{\frac{1}{d_i}}=ff'$ of the form
\begin{alignat*}3
\sqrt{2}\left(\frac{L'}{d_i}+1\right)E_i\leq \sqrt{2}(L'+1)\exp\left(\frac{\sqrt{q}}{d_i}(L'+r_F)\right)
\end{alignat*}
using (\ref{estFI}).\\
Similarly we recover the Lipschitz constant
\begin{alignat*}3
\frac{2r_F}{d_i}\exp\left(\frac{\sqrt{q}r_F}{d_i}\right)
\end{alignat*}
for $\Phi_i^{\frac{1}{d_i}}$ in the second case (\ref{2caseFI}). 
We choose
\begin{alignat}3
\label{LFI}
L_{i}=
2(L'+r_F+1)\exp\left(\frac{\sqrt{q}}{d_i}(L'+r_F)\right)
\end{alignat}
to cover both cases at once.\\

Back to (\ref{paparSF}) again. We intend to apply $(3)$
to $\Phi_i({\bf t})^{\frac{1}{d_i}}\eta_i({\bf \underline{t}^{(i)}})=ff'$.
We may assume that (\ref{bouB1}) and
the image of $\eta_{i}$ have a common point, say $Q$.
Hence by (\ref{Nineq1}) and (\ref{defcinf}) we get $|Q|\leq \sqrt{n+1}C_{\en}^{inf}$. Since $L$ is 
a Lipschitz constant of $\eta_{i}$ we see as in
(\ref{supnopsi1}) that
\begin{alignat}3
\label{supnoeta}
\|\eta_i\|_{\infty}\leq \sqrt{d_i(n+1)-1}L+\sqrt{n+1}C_{\en}^{inf}
\leq \sqrt{d_i(n+1)}(L+C_{\en}^{inf}).
\end{alignat}
Now using $(3)$ with
(\ref{estFI}), (\ref{LFI}) and (\ref{supnoeta}) yields the
Lipschitz constant 
\begin{alignat*}3
3\sqrt{d_i(n+1)}(L'+r_F+1)
\exp\left(\frac{\sqrt{q}}{d_i}(L'+r_F)\right)(L+C_{\en}^{inf})
\end{alignat*}
for the component functions in (\ref{paparSF}).
Finally we extend the component functions as in $(2)$ on
$[0,1]^{d(n+1)-1}$ to use $(1)$. 
This leads to the final Lipschitz constant
\begin{alignat*}3
3\sqrt{D}(L'+r_F+1)
\exp(\sqrt{q}(L'+r_F))(L+C_{\en}^{inf}).
\end{alignat*}
\end{proof}

\bibliographystyle{amsplain}
\bibliography{literature}

\providecommand{\bysame}{\leavevmode\hbox to3em{\hrulefill}\thinspace}
\providecommand{\MR}{\relax\ifhmode\unskip\space\fi MR }
% \MRhref is called by the amsart/book/proc definition of \MR.
\providecommand{\MRhref}[2]{%
  \href{http://www.ams.org/mathscinet-getitem?mr=#1}{#2}
}
\providecommand{\href}[2]{#2}
\begin{thebibliography}{10}

\bibitem{BG}
E.~Bombieri and W.~Gubler, \emph{{H}eights in {D}iophantine {G}eometry},
  Cambridge University Press, 2006.

\bibitem{2}
S.~Bosch, U.~G{\"u}ntzer, and R.~Remmert, \emph{{N}on-{A}rchimedean
  {A}nalysis}, Springer, 1984.

\bibitem{18}
J.~W.~S. Cassels, \emph{{A}n {I}ntroduction to the {G}eometry of {N}umbers},
  Springer, 1997.

\bibitem{56}
J.~Ellenberg and A.~Venkatesh, \emph{The number of extensions of a number field
  with fixed degree and bounded discriminant}, Ann. of Math. \textbf{163}
  (2006), 723--741.

\bibitem{7}
X.~Gao, \emph{On {N}orthcott's {T}heorem}, {Ph.D. Thesis, University of
  {C}olorado} (1995).

\bibitem{3}
S.~Lang, \emph{{F}undamentals of {D}iophantine {G}eometry}, Springer, 1983.

\bibitem{13}
\bysame, \emph{{A}lgebraic {N}umber {T}heory}, Springer, 1994.

\bibitem{20}
A.~Leutbecher, \emph{Zahlentheorie}, Springer, 1996.

\bibitem{11}
K.~Mahler, \emph{On the zeros of the derivative of a polynomial}, Monatsh.
  Math. \textbf{264} (1961), 145--154.

\bibitem{37}
D.~W. Masser and J.~D. Vaaler, \emph{{Counting algebraic numbers with large
  height I}}, {Diophantine Approximation - Festschrift f{\"u}r Wolfgang Schmidt
  (eds. H.~P.~Schlickewei, K.~Schmidt, R.~F.~Tichy), Developments in
  Mathematics 16, Springer 2008}, (pp.237--243).

\bibitem{1}
\bysame, \emph{{Counting algebraic numbers with large height II}}, Trans. Amer.
  Math. Soc. \textbf{359} (2007), 427--445.

\bibitem{26}
D.~G. Northcott, \emph{An inequality in the theory of arithmetic on algebraic
  varieties}, Proc. Cambridge Phil. Soc. \textbf{45} (1949), 502--509 and
  510--518.

\bibitem{8}
D.~Roy and J.~L. Thunder, \emph{A note on {S}iegel's lemma over number fields},
  Monatsh. Math. \textbf{120} (1995), 307--318.

\bibitem{25}
S.~H. Schanuel, \emph{Heights in number fields}, Bull. Soc. Math. France
  \textbf{107} (1979), 433--449.

\bibitem{86}
W.~M. Schmidt, \emph{On heights of algebraic subspaces and diophantine
  approximations}, Ann. of Math. \textbf{85} (1967), 430--472.

\bibitem{22}
\bysame, \emph{Northcott's {T}heorem on heights {I}. {A} general estimate},
  Monatsh. Math. \textbf{115} (1993), 169--183.

\bibitem{14}
\bysame, \emph{Northcott's {T}heorem on heights {II}. {T}he quadratic case},
  Acta Arith. \textbf{70} (1995), 343--375.

\bibitem{34}
J.~L. Thunder, \emph{The number of solutions of bounded height to a system of
  linear equations}, J. Number Theory \textbf{43} (1993), 228--250.

\bibitem{78}
\bysame, \emph{Remarks on adelic geometry of numbers}, {N}umber theory for the
  millenium III. {P}roceedings of the millennial conference on number theory,
  {U}rbana-{C}hampaign, {IL}, {USA}, {M}ay 21-26, 2000 (M.~A.~Bennett et al,
  ed.) (2002), 253--259.

\bibitem{WiThesis}
M.~Widmer, \emph{Asymptotically counting points of bounded height}, {Ph.D.
  Thesis, Universit\"at Basel} (2007).

\bibitem{art2}
\bysame, \emph{Counting points of fixed degree and bounded height}, to appear
  in Acta Arithmetica (2009).

\bibitem{art3}
\bysame, \emph{Counting points of fixed degree and bounded height on linear
  varieties}, submitted (2009).

\bibitem{art4}
\bysame, \emph{On number fields with nontrivial subfields}, preprint (2009).

\end{thebibliography}

\end{document}